\DeclareMathOperator{\Z}{Z}
\DeclareMathOperator{\pt}{pt}
\DeclareMathOperator{\PT}{PT}
\DeclareMathOperator{\pti}{PTI}
\DeclareMathOperator{\amf}{AMF}
\DeclareMathOperator{\ft}{ft}
\DeclareMathOperator{\M}{M}
\DeclareMathOperator{\mr}{mr}
\DeclareMathOperator{\cc}{cc}
\let\th\relax
\DeclareMathOperator{\th}{th}
\DeclareMathOperator{\diam}{diam}
\newcommand\abs[1]{\left|#1\right|}
\newcommand\feas[1]{\operatorname{Feas}\left(#1\right)}
\newcommand\opt[1]{\operatorname{Opt}\left(#1\right)}
\newcommand\im[2]{\operatorname{IM}\left(#1,#2\right)}
\newcommand\impt[2]{\operatorname{IM}_{\pt}\left(#1,#2\right)}
\newcommand\imth[2]{\operatorname{IM}_{\th}\left(#1,#2\right)}
\newcommand\tsm[2]{\operatorname{TSM}\left(#1,#2\right)}
\newcommand\tsmpt[2]{\operatorname{TSM}_{\pt}\left(#1,#2\right)}
\newcommand\tsmPT[2]{\operatorname{TSM}_{\PT}\left(#1,#2\right)}
\newcommand\tsmth[2]{\operatorname{TSM}_{\th}\left(#1,#2\right)}
\newcommand\tsmpti[3]{\operatorname{TSM}_{\pti}\left(#1,#2,#3\right)}
\newcommand\fc[1]{\operatorname{FC}\left(#1\right)}
\newcommand\fn[1]{\operatorname{FN}\left(#1\right)}
\newcommand\lfc[1]{\operatorname{FC}^{*}\left(#1\right)}
\newcommand\mf[2]{\operatorname{MF}\left(#1,#2\right)}
\newcommand\lmf[2]{\operatorname{MF}^{*}\left(#1,#2\right)}
\newcommand\cl[1]{\operatorname{cl}\left(#1\right)}
\newcommand\mff[2]{\operatorname{MFF}\left(#1,#2\right)}
\newtheorem{theorem}{Theorem}[section]
\newtheorem{lemma}[theorem]{Lemma}
\newtheorem{corollary}[theorem]{Corollary}
\newtheorem{conjecture}[theorem]{Conjecture}
\theoremstyle{definition}
\newtheorem{example}[theorem]{Example}
\begin{document}
\title{IP models for minimum zero forcing sets, forts, and related graph parameters}
\author{Thomas R. Cameron\footnote{Department of Mathematics, Penn State Behrend {\tt trc5475@psu.edu}} \and Jonad Pulaj\footnote{Department of Mathematics and Computer Science, Davidson College {\tt jopulaj@davidson.edu}}}
\maketitle

\begin{abstract}
Zero forcing is a binary coloring game on a graph where a set of filled vertices can force non-filled vertices to become filled following a color change rule.
In 2008, the zero forcing number of a graph was shown to be an upper bound on its maximum nullity. 
In addition, the combinatorial optimization problem for the zero forcing number was shown to be NP-hard.
Since then, the study of zero forcing and its related parameters has received considerable attention. 
In 2018, the forts of a graph were defined as non-empty subsets of vertices where no vertex outside the set has exactly one neighbor in the set.
Forts have been used to model zero forcing as an integer program and provide lower bounds on the zero forcing number. 
To date, three integer programming models have been developed for the zero forcing number of a graph: the Infection Model, Time Step Model, and Fort Cover Model. 
In this article, we present variations of these models for computing the zero forcing number and related graph parameters, such as the minimum and maximum propagation times, throttling number, and fractional zero forcing number.
In addition, we present several new models for computing the realized propagation time interval, all minimal forts of a graph, and the fort number of a graph. 
We conclude with several numerical experiments that demonstrate the effectiveness of our models when applied to small and medium order graphs.
Moreover, we provide experimental evidence for several open conjectures regarding the propagation time interval, the number of minimal forts, the fort number, and the fractional zero forcing number of a graph. 
\end{abstract}
\section{Introduction}\label{sec:intro}
Zero forcing is a binary coloring game on a graph where a set of filled vertices can force non-filled vertices to become filled following a color change rule.
The zero forcing number is the minimum number of initially filled vertices necessary to force all vertices to become filled after repetitive application of a color change rule. 
In 2008, the zero forcing number of a graph was shown to be an upper bound on the maximum nullity of corresponding real symmetric matrices~\cite{AIM2008}.
Since then, the zero forcing number has been extensively studied and many results have been published, for example, on the known values for certain families of graphs~\cite{AIM2008}, extreme values~\cite{Ekstrand2013,Hogben2007}, and the effect of certain graph operations~\cite{Benson2018,Ekstrand2013,Row2012}.
In addition, zero forcing has been associated with a variety of applications, for example, the controllability of networked systems~\cite{Trefois2015}, quantum systems~\cite{Burgarth2007}, and logic circuits~\cite{Burgarth2015}.

Related graph parameters, such as the propagation time and throttling number have also received a considerable amount of attention in the literature, for example, on known values for certain families of graphs~\cite{Butler2013,Carlson2021,Chilakamarri2012,Hogben2012}, extreme values~\cite{Carlson2021}, and the effect of certain graph operations~\cite{Hogben2022}.
Moreover, applications of propagation time and throttling number include the controllability of quantum systems~\cite{Severini2008} and the phase measurement unit problem in electrical engineering~\cite{Brueni2005}.

The combinatorial optimization problem for the zero forcing number is NP-hard since the corresponding decision problem was shown to be NP-complete~\cite{Aazami2008}.
In~\cite{Butler2014}, a dynamic programming algorithm is developed for the computation of the zero forcing number; in~\cite{Agra2019,Brimkov2019zf,Brimkov2021} boolean satisfiability and integer programming models are developed for the computation of the zero forcing number. 
In~\cite{Brimkov2019th}, the authors show that the decision problem corresponding to the throttling number is NP-complete; hence, the combinatorial optimization problem for the throttling number is NP-hard.
Moreover, they provide a C implementation of a brute-force algorithm for computing the throttling number.

In~\cite{Brimkov2019zf}, the authors present two distinct integer programming models for the zero forcing number.
The first is called the Infection Model, since it models the zero forcing game as a dynamic graph infection process.
The Infection Model stores all information about the zero forcing game such as the initial set of filled vertices, the time step at which each vertex is filled, and what forcings occurred. 
While the Infection Model only has a polynomial number of constraints, some of these constraints are of big-M form.
Big-M constraints can lead to a large gap between the optimal value of the integer program and its linear programming relaxation, which in turn may lead to poor performance. 

The second is called the Fort Cover Model since it models the zero forcing sets of a graph as covers for the forts of the graph; that is, a subset of vertices is a zero forcing set if and only if it intersects every fort of the graph~\cite[Theorem 8]{Brimkov2019zf}. 
The forts of a graph were originally introduced in~\cite{Fast2018} and are closely related to the failed zero forcing sets of a graph~\cite{Fetcie2015,Swanson2023}; in particular, a set of vertices is a minimal fort, with respect to set inclusion, if and only if its complement is a maximal failed zero forcing set~\cite[Theorem 10]{Becker2025}. 
While the Fort Cover Model contains no big-M constraints, there are graphs with an exponential number of minimal forts, which leads to an exponential number of constraints~\cite{Becker2025,Brimkov2021}.
Hence, the Fort Cover Model requires constraint generation.
In~\cite{Brimkov2019zf}, the most efficient constraint generation technique relied on finding a minimum fort in the complement of a stalled failed zero forcing set.
It is worth noting that computing a minimum fort is NP-hard since the decision problem associated with a maximum failed zero forcing set is NP-complete~\cite{Shitov2017}.

In~\cite{Cameron2023}, the authors define the optimal value of the Fort Cover Model linear programming relaxation as the fractional zero forcing number.
In addition, they define the fort number as the largest number of pairwise disjoint forts. 
Both the fort number and fractional zero forcing number are used to provide bounds on the zero forcing number of Cartesian products.
Moreover, they conjecture that both are lower bounds for the maximum nullity of a graph. 

In~\cite{Agra2019}, the authors develop the Time Step Model for the zero forcing number of a graph. 
This model is similar to the Infection Model in that it models the zero forcing game as a dynamic graph infection process and retains all information about the zero forcing game, such as the initial set of filled vertices, the time step at which each vertex is filled, and what forcings occurred. 
Moreover, the Time Step Model avoids big-M constraints and was shown to be stronger than the Infection Model; that is, every feasible solution of the linear relaxation of the Time Step Model is also feasible for the relaxation of the Infection Model, and there are feasible solutions to the linear relaxation of the Infection Model that are not feasible for the relaxation of the Time Step Model.

In this article, we present variations of the Infection Model, Time Step Model, and Fort Cover Model to compute the zero forcing number and related graph parameters, such as the minimum and maximum propagation times, throttling number, and fractional zero forcing number. 
In addition, we present several new models for computing the realized propagation time interval, all minimal forts of a graph, and the fort number of a graph.
We conclude with several numerical experiments that demonstrate the effectiveness of our models when working with small and medium order graphs. 
Moreover, we provide experimental evidence for several open conjectures regarding the propagation time interval, the number of minimal forts, the fort number, and the fractional zero forcing number of a graph~\cite{Becker2025,Brimkov2025,Cameron2023}.
\section{Preliminaries}\label{sec:prelim}
In this section, we introduce definitions and preliminary results that will be used throughout this article.
In addition, we state several open conjectures that will be tested in Section~\ref{sec:num_exp}. 
\subsection{Graph definitions}\label{subsec:prelim_graph_defs}
Throughout this article, we let $\mathbb{G}$ denote the set of all finite simple unweighted graphs.
For each $G\in\mathbb{G}$, we have $G=(V,E)$, where $V$ is the vertex set, $E$ is the edge set, and $\{u,v\}\in E$ if and only if $u\neq v$ and there is an edge between vertices $u$ and $v$.
If the context is not clear, we use $V(G)$ and $E(G)$ to specify the vertex set and edge set of $G$, respectively. 
The \emph{order} of $G$ is denoted by $n=\abs{V}$ and the \emph{size} of $G$ is denoted by $m=\abs{E}$.
When $m=0$, we refer to $G$ as an \emph{empty graph} of order $n$; when $m=\binom{n}{2}$, we refer to $G$ as a \emph{complete graph} of order $n$.

Given $u\in V$, we define the \emph{neighborhood} of $u$ as $N(u) = \left\{v\in V\colon\{u,v\}\in E\right\}$.
We refer to every $v\in N(u)$ as a \emph{neighbor} of $u$, and we say that $u$ and $v$ are \emph{adjacent}. 
If the context is not clear, we use $N_{G}(u)$ to denote the neighborhood of $u$ in the graph $G$.
The \emph{closed neighborhood} of $u$ is defined by $N[u] = N(u)\cup\{u\}$.
The \emph{degree} of $u$ is defined by $\deg(u) = \abs{N_{G}(u)}$.
When $\deg(u)=1$ we refer to $u$ as a \emph{leaf}.

The graph $G'=(V',E')$ is a \emph{subgraph} of $G$ if $V'\subseteq V$ and $E'\subseteq E$. 
Given $W\subseteq V$, the subgraph induced by $W$ is the subgraph of $G$ made up of the vertices in $W$ and the edges $\{u,v\}\in E$ such that $u,v\in W$. 
We reference this subgraph as an \emph{induced subgraph} and denote it by $G[W]$. 
When $G[W]$ is a complete graph, we reference $W$ as a \emph{clique}. 
The \emph{clique cover number}, denoted $\cc(G)$, is the minimum number of cliques needed to cover the entire graph.

Given vertices $u,v\in V$, we say that $u$ is \emph{connected to} $v$ if there exists a list of distinct vertices $(w_{0},w_{1},\ldots,w_{l})$ such that $u=w_{0}$, $v=w_{l}$, $\{w_{i},w_{i+1}\}\in E$ for all $i=0,1,\ldots,l-1$, and $w_{i}\neq w_{j}$ for all $i\neq j$. 
Such a list of vertices is called a \emph{path}; in particular, we reference it as a $(u,v)$-path. 
We say that a graph $G=(V,E)$ is \emph{connected} if for every pair of vertices $u,v\in V$ there exists a $(u,v)$-path; otherwise, we call $G$ \emph{disconnected}.
Given $u\in V$, the graph $G-u$ is the induced subgraph $G[V\setminus\{u\}]$ obtained by deleting the vertex $u$ from the graph $G$. 
A graph is $k$-connected if deleting fewer than $k$ vertices does not disconnect the graph, that is, there is at least $k$ distinct $(u,v)$-paths between any two vertices $u,v\in V$.

A \emph{path graph} of order $n$, denoted $P_{n}$, has vertex set $V=\{v_{1},v_{2},\ldots,v_{n}\}$ and edge set 
\[
E = \left\{\{v_{i},v_{i+1}\}\colon 1\leq i\leq n-1\right\}.
\]
A \emph{cycle graph} of order $n$, denoted $C_{n}$, can be obtained from $P_{n}$ by adding the edge $\{v_{1},v_{n}\}$. 
We say that $G=(V,E)$ is a \emph{tree} if $G$ is connected and $G[W]$ is not a cycle for any $W\subseteq V$.
The \emph{star graph} of order $n$ is a tree with a single central vertex adjacent to $(n-1)$ leafs. 
The \emph{hypercube graph} of dimension $d$, denoted $Q_{d}$, can be defined recursively as $Q_{1}=P_{2}$ and
\[
Q_{i} = Q_{i-1}\Box Q_{1},~2\leq i\leq d,
\]
where $\Box$ denotes the Cartesian product.

In general, given graphs $G=(V,E)$ and $G'=(V',E')$ in $\mathbb{G}$ such that $V\cap V'=\emptyset$, the \emph{Cartesian product} $G\Box G'$ has vertex set $V(G\Box G') = V(G)\times V(G')$ and edge set
\[
E(G\Box G') = \left\{\{(u,u'),(v,v')\}\colon u=v,~u'\in N_{G'}(v')~\textrm{or}~u'=v',~u\in N_{G}(v)\right\}.
\]
In addition, the \emph{vertex sum} of $G$ and $G'$, denoted $G\oplus_{v}G'$, is the graph formed by identifying vertices $u\in V$ and $u'\in V'$ to form the vertex $v$.
That is, $G\oplus_{v}G'$ has vertex set
\[
V(G\oplus_{v}G') = \left(V\setminus\{u\}\right)\cup\left(V'\setminus\{u'\}\right)\cup\{v\}
\]
and edge set
\[
E(G\oplus_{v}G') = \left(E\setminus{S_{u}}\right)\cup\left(E'\setminus{S_{u'}}\right)\cup S_{v},
\]
where $S_{u}$ denotes all edges of $G$ that are incident on $u$, $S_{u'}$ denotes all edges of $G'$ that are incident on $u'$, and $S_{v}=\left\{\{v,w\}\colon w\in N_{G}(u)\cup N_{G'}(u')\right\}$.
The \emph{edge sum} of $G$ and $G'$, denoted $G+_{e}G'$, is the graph formed by identifying $u\in V$ and $u'\in V'$ to form the edge $e=\{u,u'\}$.
That is, $G+_{e}G'$ has vertex set
\[
V(G+_{e}G') = V\cup V'
\]
and edge set 
\[
E(G+_{e}G') = E\cup E'\cup\{u,u'\}. 
\]
\subsection{Zero forcing}\label{subsec:prelim_zero_forcing}
\emph{Zero forcing} is a binary coloring game on a graph, where vertices are either filled or non-filled. 
In this article, we denote filled vertices by the color gray and non-filled vertices by the color white. 
An initial set of gray vertices can force white vertices to become gray following a color change rule. 
While there are many color change rules, see~\cite[Chapter 9]{Hogben2022}, we use the \emph{standard rule} which states that a gray vertex $u$ can force a white vertex $v$ if $v$ is the only white neighbor of $u$.

A zero forcing game on $G$ corresponds to a collection of subsets $C^{(t)}$ and $C^{[t]}$, $t\geq 0$, and a collection of forces $\phi(C)$.
In particular, $C=C^{(0)}=C^{[0]}$ is the set of initially filled vertices, $C^{(t)}$ denotes the set of vertices that are forced at time step $t$, and $C^{[t]}$ denotes the set of all filled vertices at time step $t$; hence, $C^{[t]} = C^{[t-1]}\cup C^{(t)},~t\geq 0$.
Furthermore, every vertex of $G$ is in exactly one set $C^{(t)}$, and if $v\in C^{(t)}$ then $v$ must be forced by exactly one of its neighbors $u$ such that $u$ and all of its neighbors except for $v$ are in $C^{[t-1]}$.
We denote such a forcing by $u\rightarrow v$.
The collection $\phi(C)$ stores all such forcings in the zero forcing game.

Assuming the graph is finite and at least one of the potential forces are applied at each time step, there exists a time step $t^{*}\geq 0$ for which no more forcings can be applied; hence, $C^{[t^{*}]}=C^{[t^{*}+t]}$, for all $t\geq 0$.
We reference $C^{[t^{*}]}$ as the \emph{closure} of $C$ and denote it by $\cl{C}$.
Furthermore, we say that $\phi(C)$ is a collection of forces that produce $\cl{C}$.
Note that $\phi(C)$ may not be unique since a vertex may be forced by several of its neighbors, but only one forcing will be used in the zero forcing game. 
If $\cl{C}=V$, then we say that $C$ is a \emph{zero forcing set} of $G$.
The \emph{zero forcing number} of $G$ is defined as
\[
\Z(G) = \min\left\{\abs{C}\colon\cl{C}=V\right\}.
\]
If $C$ is a zero forcing set of $G$ such that $\Z(G)=\abs{C}$, we say that $C$ is a \emph{minimum zero forcing set}.

The \emph{propagation time} of $C$, denoted $\pt(G,C)$, is the smallest $t^{*}$ such that $C^{[t^{*}]}=V$ or $\infty$ if $C$ is not a zero forcing set of $G$. 
When measuring propagation time, it is assumed that in a single time step all possible forces that can be done independently of each other are applied.
The \emph{minimum propagation time} of $G$ is defined as
\[
\pt(G) = \min\left\{\pt(G,C)\colon\cl{C}=V,~\abs{C}=Z(G)\right\}.
\]
and the \emph{maximum propagation time} of $G$ is defined as
\[
\PT(G) = \max\left\{\pt(G,C)\colon\cl{C}=V,~\abs{C}=Z(G)\right\}.
\]
In addition, the \emph{throttling number} of $C$ is defined as $\th(G,C) = \abs{C} + \pt(G,C)$ and the throttling number of $G$ is defined as
\[
\th(G) = \min_{C\subseteq V}\th(G,C).
\]
To our knowledge, no where else in literature has considered the computational complexity of the propagation time.
We define the decision version of the propagation time as follows: Given a graph $G$ and integers $s$ and $z$, is there is a zero forcing set of $G$ with cardinality less than $s$ and propagation time less than $z$?
It is clear that this decision problem is in NP since given a zero forcing set we can arrive at its closure in polynomial time, see Algorithm 1 in~\cite{Brimkov2019zf}.
Moreover, we can trivially reduce the decision version of the zero forcing problem to the propagation time decision problem. Indeed, the decision version of the zero forcing problem asks the following: Given a graph $G$ and integer $k$, is there a zero forcing set of $G$ with cardinality less than $k$? Setting $s=k$ and $z=n-k+1$, we reduce the zero forcing problem to the propagation time problem. Hence, the propagation time decision problem is NP-complete, which implies that the combinatorial optimization problem for both the minimum and maximum propagation time is NP-hard.

The \emph{propagation time interval} of $G$ is defined as
\[
\left[\pt(G),\PT(G)\right] = \left\{\pt(G),\pt(G)+1,\ldots,\PT(G)-1,\PT(G)\right\}.
\]
The \emph{realized propagation time interval} is the set of all $k\in\left[\pt(G),\PT(G)\right]$ such that there is a minimum zero forcing set $C$ that realizes $\pt(G,C)=k$. 
The graph $G$ has a \emph{full propagation time interval} if the realized propagation time interval is equal to $\left[\pt(G),\PT(G)\right]$.
There are only a few families of graphs that are known to have a full propagation time interval, for example, the path, cycle, and complete bipartite graphs~\cite{Hogben2012,Hogben2022}. 
In~\cite{Brimkov2025}, the authors make the following conjecture regarding the propagation time interval of the hypercube graph.
\begin{conjecture}[{\cite[Conjecture 4.4]{Brimkov2025}}]\label{con:pti_hypercube}
Let $d\geq 2$.
Then, $\pt(Q_{d}) = 1$ and $\PT(Q_{d})=2^{d-2}$.
Moreover, $Q_{d}$ has a full propagation time interval. 
\end{conjecture}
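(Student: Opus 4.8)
Here is how one might approach a proof of Conjecture~\ref{con:pti_hypercube}.

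The plan is to handle the three assertions in turn, and the equality $\pt(Q_d)=1$ is the easy part. Using the recursive structure $Q_d=Q_{d-1}\Box Q_1$, write $Q_d$ as two disjoint copies $H_0,H_1$ of $Q_{d-1}$ joined by a perfect matching, and recall the well-known value $\Z(Q_d)=2^{d-1}$. Then $V(H_0)$ is a zero forcing set of size $\Z(Q_d)$: each $v\in V(H_0)$ has every neighbor filled except its matched partner in $H_1$, so all of $H_1$ is forced in a single time step. Hence $\pt(Q_d,V(H_0))=1$ and $\pt(Q_d)\le 1$; and since $\Z(Q_d)=2^{d-1}<2^d=\abs{V(Q_d)}$, no minimum zero forcing set is all of $V(Q_d)$, so $\pt(Q_d,C)\ge 1$ for every minimum $C$. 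Thus $\pt(Q_d)=1$.

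For the full interval I would construct, for each $k\in\{1,\dots,2^{d-2}\}$, a minimum zero forcing set $C_k$ with $\pt(Q_d,C_k)=k$; the choice $k=2^{d-2}$ then also gives $\PT(Q_d)\ge 2^{d-2}$. The base cases come from $Q_3$, where a single face has propagation time $1$ and the ``staircase'' $\{(0,0,0),(0,0,1),(0,1,1),(1,1,1)\}$ is a minimum zero forcing set of propagation time $2=2^{3-2}$. To lift these through $Q_d=Q_{d-1}\Box Q_1$: given a minimum zero forcing set $C$ of $Q_{d-1}$ with $\pt(Q_{d-1},C)=j$, placing a copy of $C$ in both $H_0$ and $H_1$ yields, by a lockstep argument (matched vertices are filled at the same time, so the process reduces to two parallel copies of the $Q_{d-1}$ process), a minimum zero forcing set of $Q_d$ with propagation time $j$; ``offsetting'' the two copies---arranging the $H_1$ part so that it cannot force anything until $H_0$ has filled---extends the propagation time further, and interpolating between the lockstep and offset regimes over the $2^{d-2}$ available forcing chains should realize every value in $\{1,\dots,2^{d-2}\}$. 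The subtle point is pinning down the propagation time of the offset constructions exactly, because the matching edges let the $H_1$ vertices influence which forces are available in $H_0$.

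The main obstacle is the upper bound $\PT(Q_d)\le 2^{d-2}$: one must show that \emph{no} minimum zero forcing set is slower than this. The general bound $\PT(G)\le\abs{V(G)}-\Z(G)$ only gives $\PT(Q_d)\le 2^{d-1}$, so a factor of two must be extracted from the structure of the hypercube. I would attempt induction on $d$ through $Q_d=Q_{d-1}\Box Q_1$, aiming for the recursion $\PT(Q_d)\le 2\,\PT(Q_{d-1})$: given a minimum zero forcing set $C$ of $Q_d$, analyze the colorings it induces on the two copies $H_0$ and $H_1$, show that within each copy $C$ behaves like a zero forcing set of $Q_{d-1}$ augmented by extra filled vertices, and conclude that each half is filled within $2\,\PT(Q_{d-1})$ steps of when forcing reaches it. Carrying this out requires a structural understanding of the minimum zero forcing sets of hypercubes---how such a set distributes between the two halves and how the matching transfers forces---that does not seem to be available; this is why the statement remains a conjecture, and why the contribution here is to verify it computationally for small $d$ using the integer programming models of this article.
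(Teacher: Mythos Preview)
The paper does not prove Conjecture~\ref{con:pti_hypercube}; it records it as an open conjecture from~\cite{Brimkov2025} and, in Section~\ref{sec:num_exp}, uses Algorithm~\ref{alg:pti} to verify it computationally for $d\in\{2,3,4,5\}$ (Table~\ref{tab:pti_hypercube}). So there is no proof in the paper for your proposal to be compared against, and you yourself recognize this in your final sentence.

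That said, a few remarks on the substance of your sketch. Your argument for $\pt(Q_d)=1$ is correct and complete: taking one copy of $Q_{d-1}$ inside $Q_d=Q_{d-1}\Box Q_1$ gives a minimum zero forcing set that forces everything in one step, and $\Z(Q_d)<2^d$ forces $\pt(Q_d)\ge 1$. Your inductive ``lockstep'' construction is fine for showing that every propagation time realized in $Q_{d-1}$ is also realized in $Q_d$, but the ``offset'' construction that is supposed to reach the new values $2^{d-3}+1,\dots,2^{d-2}$ is only described impressionistically; saying that ``interpolating between the lockstep and offset regimes \dots\ should realize every value'' is exactly the kind of claim that has to be made precise for a proof, and you do not do so. Finally, you correctly isolate the upper bound $\PT(Q_d)\le 2^{d-2}$ as the genuine obstruction: your proposed recursion $\PT(Q_d)\le 2\,\PT(Q_{d-1})$ is plausible but, as you say, rests on structural control of how an arbitrary minimum zero forcing set of $Q_d$ splits across the two halves, and no such control is established here or in the paper.

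In short, your write-up is an honest outline of what a proof would require and where the difficulties lie, not a proof; and that is consistent with the paper, which treats the statement as a conjecture and offers only numerical evidence.
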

\subsection{Forts}\label{subsec:prelim_forts}
Given a graph $G\in\mathbb{G}$, a non-empty subset $F\subseteq V$ is a \emph{fort} if no vertex $u\in V\setminus{F}$ has exactly one neighbor in $F$.
Let $\mathcal{F}_{G}$ denote the collection of all forts of $G$. 
Note that every zero forcing set of $G$ forms a cover for $\mathcal{F}_{G}$; in particular, Theorem~\ref{thm:fort_cover} states that a subset of vertices is a zero forcing set if and only if that subset intersects every fort. 
While one direction of this result was originally proven in~\cite[Theorem 3]{Fast2018}, both directions are shown in~\cite[Theorem 8]{Brimkov2019zf}.
\begin{theorem}[{\cite[Theorem 8]{Brimkov2019zf}}]\label{thm:fort_cover}
Let $G\in\mathbb{G}$.
Then, $S\subseteq V$ is a zero forcing set of $G$ if and only if $S$ intersects every fort in $\mathcal{F}_{G}$.
\end{theorem}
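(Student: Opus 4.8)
The plan is to establish both implications by contraposition, in each case producing an explicit witness from the failure of the other side.

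\textbf{From ``not a zero forcing set'' to ``misses a fort''.} Suppose $S$ is not a zero forcing set, so $\cl{S}\neq V$. I would set $F = V\setminus\cl{S}$, which is non-empty, and argue that $F$ is a fort. If it were not, some $u\in V\setminus F = \cl{S}$ would have exactly one neighbor $v$ in $F$. Then $u$ is filled, every neighbor of $u$ other than $v$ lies in $\cl{S}$ and is therefore filled, and $v$ is white; hence the force $u\to v$ is available, contradicting the fact that no forces can be applied to the closure. Thus $F\in\mathcal{F}_{G}$, and since $S\subseteq\cl{S}$ is disjoint from $F$, the set $S$ does not intersect every fort.

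\textbf{From ``misses a fort'' to ``not a zero forcing set''.} Suppose $S\cap F=\emptyset$ for some $F\in\mathcal{F}_{G}$. I would prove by induction on $t$ the invariant $C^{[t]}\cap F=\emptyset$ for the zero forcing game started from $C=S$. The base case $t=0$ is the hypothesis. For the step, assume $C^{[t-1]}\cap F=\emptyset$ and, toward a contradiction, suppose some $v\in C^{(t)}$ lies in $F$, say via the force $u\to v$. Then $u$ and all neighbors of $u$ other than $v$ belong to $C^{[t-1]}$, hence avoid $F$ by the inductive hypothesis; in particular $u\in V\setminus F$, and $v$ is the \emph{only} neighbor of $u$ lying in $F$. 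This contradicts $F$ being a fort. Therefore $\cl{S}\cap F=\emptyset$, and since $F$ is non-empty, $\cl{S}\neq V$, so $S$ is not a zero forcing set.

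Neither direction is technically deep; the only place demanding care is the inductive step of the second part, where one must be sure both that $u$ itself lies outside $F$ (so that it is a legitimate witness against $F$ being a fort) and that $v$ is its unique $F$-neighbor — both follow once one records that $u$ and all of its neighbors except $v$ are colored by time $t-1$. A cleaner alternative avoiding the induction is to consider the \emph{first} time step at which some vertex of $F$ becomes filled and run the same contradiction; I would likely mention this but keep the invariant-based proof as the main line, since it makes the mechanism transparent.
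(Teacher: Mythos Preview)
Your proof is correct in both directions. Note, however, that the paper does not actually supply its own proof of this theorem: it is stated as a cited result from~\cite{Brimkov2019zf} and used as a black box to justify the Fort Cover Model. Your argument is the standard one (and essentially the one given in the cited reference): the complement of the closure of a non-forcing set is a fort, and conversely a fort disjoint from the initial set can never be entered by any force, proved either by induction on time steps or by looking at the first violation. There is nothing to compare against here, but nothing to fix either.
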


Theorem~\ref{thm:fort_cover} motivates the Fort Cover Model, see~\eqref{eq:fc_obj}--\eqref{eq:fc_const2}, which models zero forcing as a set cover problem. 
Moreover, the optimal value of the linear programming relaxation of the Fort Cover Model is the fractional zero forcing number, denoted $\Z^{*}(G)$, as defined in~\cite{Cameron2023}. 
The fort number of a graph is defined by
\[
\ft(G) = \max\left\{\abs{\mathcal{F}}\colon\textrm{$\mathcal{F}$ is a collection of disjoint forts of $G$}\right\}
\]
and the following inequality holds for all graphs
\[
\ft(G) \leq \Z^{*}(G) \leq \Z(G),
\]
see~\cite[Remark 2.5]{Cameron2023}. 

The minimal forts of a graph play an important role in the facet defining inequalities of the Fort Cover Model, see~\cite[Theorem 10]{Brimkov2019zf} and~\cite[Theorem 1]{Brimkov2021}.
In~\cite{Becker2025}, the authors enumerate the number of minimal forts for several families of graphs and identify families with an exponential number of minimal forts.
Moreover, they make the following conjecture regarding the maximum number of minimal forts of a tree. 
\begin{conjecture}[{\cite[Conjecture 30]{Becker2025}}]\label{con:tree_gr}
Let $T_{n}$ denote a tree with the largest number of minimal forts over all trees of order $n\geq 1$. 
Then, the number of minimal forts satisfies
\[
\abs{\mathcal{F}_{T_{n}}} \leq \binom{n}{2}\abs{\mathcal{F}_{P_{n}}}.
\]
Moreover, the limit of the ratio of consecutive terms satisfies
\[
\lim_{n\rightarrow\infty}\frac{\abs{\mathcal{F}_{T_{n+1}}}}{\abs{\mathcal{F}_{T_{n}}}} = \lim_{n\rightarrow\infty}\frac{\abs{\mathcal{F}_{P_{n+1}}}}{\abs{\mathcal{F}_{P_{n}}}} = \psi,
\]
where $\psi=1.3247179572\ldots$ denotes the plastic ratio.
\end{conjecture}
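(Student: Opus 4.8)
\emph{Proof proposal.} Both assertions are controlled by the path, so I would first determine $|\mathcal{F}_{P_n}|$ exactly; throughout I read $|\mathcal{F}_{P_n}|$ and $|\mathcal{F}_{T_n}|$ as counts of \emph{minimal} forts, in keeping with the surrounding discussion (for the collection of all forts the growth rate would be the golden ratio rather than $\psi$). The first step is to show that every fort of $P_n$ contains both leaves: if $v_a$ were the leftmost vertex of a fort $F$ with $a\geq 2$, then $v_{a-1}\notin F$ has $v_a$ as its only neighbor in $F$ when $a=2$, and has $v_a\in F$ together with $v_{a-2}\notin F$ as its two neighbors when $a\geq 3$, either way violating the fort condition; the rightmost vertex must be $v_n$ by symmetry. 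A short extension of the same argument shows that $F$ is a fort of $P_n$ if and only if $V\setminus F$ is an independent set contained in $\{v_2,\dots,v_{n-1}\}$ (a run of two consecutive vertices outside $F$ forces its left end to have exactly one neighbor in $F$), and hence that $F$ is a \emph{minimal} fort if and only if $V\setminus F$ is a maximal independent set of the interior path $P_{n-2}$. Thus $|\mathcal{F}_{P_n}|$ equals the number of maximal independent sets of $P_{n-2}$, which by a short transfer-matrix computation (or by the known enumeration of maximal independent sets of paths) satisfies $|\mathcal{F}_{P_n}| = |\mathcal{F}_{P_{n-2}}| + |\mathcal{F}_{P_{n-3}}|$ for large $n$; its characteristic polynomial $x^{3}-x-1$ has dominant root $\psi$, which establishes $|\mathcal{F}_{P_{n+1}}|/|\mathcal{F}_{P_n}|\to\psi$.

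With the path settled I would reduce the tree statement to the single inequality $|\mathcal{F}_{T_n}|\leq\binom{n}{2}|\mathcal{F}_{P_n}|$. The matching lower bound is free, since $P_n$ is a tree and so $|\mathcal{F}_{P_n}|\leq|\mathcal{F}_{T_n}|$, giving $\liminf|\mathcal{F}_{T_n}|^{1/n}\geq\psi$; and the upper bound, together with $\binom{n}{2}^{1/n}\to 1$, gives $\limsup|\mathcal{F}_{T_n}|^{1/n}\leq\psi$, from which a standard argument (using the Padovan recurrence for $|\mathcal{F}_{P_n}|$ to control ratios) yields $|\mathcal{F}_{T_{n+1}}|/|\mathcal{F}_{T_n}|\to\psi$ as well. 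So everything rests on the inequality $|\mathcal{F}_{T_n}|\leq\binom{n}{2}|\mathcal{F}_{P_n}|$.

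For that inequality I would try to project minimal forts of a tree onto path-type data. One option is to fix a diametral path $P$ of $T$ and argue that a minimal fort $F$ is determined by an unordered pair of vertices of $T$ together with a maximal-independent-set datum along a path, so that the number of minimal forts is at most $\binom{n}{2}$ times the number of such path data, i.e.\ at most $\binom{n}{2}|\mathcal{F}_{P_n}|$; the heuristic, already visible in the star $K_{1,n-1}$ whose minimal forts are exactly the $\binom{n-1}{2}$ pairs of leaves, is that a minimal fort is ``anchored'' by a bounded number of extremal vertices and is otherwise forced to look locally like a path fort. A second option is an induction on order: set $g(n)=\max\{|\mathcal{F}_{T}|\colon |V(T)|=n\}$, decompose an extremal $T$ as a pendant path of some length $\ell$ attached at a vertex of a smaller tree (using the vertex-sum/edge-sum operations of Section~\ref{subsec:prelim_graph_defs}), and track how attaching that pendant path multiplies the minimal-fort count, aiming for a recursion that closes against $\binom{n}{2}|\mathcal{F}_{P_{n}}|$.

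The main obstacle is exactly this projection/recursion step, and it is the reason the statement remains a conjecture. Branch vertices produce minimal forts with no path analogue: the complement of a minimal fort of a tree need not be an independent set, and its behaviour near a vertex of degree at least three is not captured by the two-leaf analysis used for the path, so one must control both the \emph{number} of such forts and the \emph{precise} polynomial factor ($\binom{n}{2}$, not merely some polynomial) — which seems to require a global structural description of minimal forts in trees that is not currently available. Accordingly, the contribution here is the enumeration model of Section~\ref{sec:num_exp}, which provides computational evidence for both the bound and the limiting ratio rather than a proof.
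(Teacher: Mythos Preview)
The paper does not prove this statement: it is recorded as an open conjecture (Conjecture~\ref{con:tree_gr}), and the only contribution toward it in the paper is the computational evidence of Section~7.4, obtained by enumerating minimal forts with Algorithm~\ref{alg:amf} for all trees of order $n\le 23$. You have correctly diagnosed this, and your final paragraph is exactly right --- there is no proof to compare against.

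Your analysis of the path case is sound and agrees with what is known (the paper cites \cite[Corollary~14]{Becker2025} for the closed form of $|\mathcal{F}_{P_n}|$): forts of $P_n$ correspond to independent sets of the interior $P_{n-2}$, minimal forts to maximal independent sets, and the Padovan recurrence $x^3=x+1$ gives the growth rate $\psi$.

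One point worth tightening: even granting the conjectured inequality $|\mathcal{F}_{P_n}|\le |\mathcal{F}_{T_n}|\le\binom{n}{2}|\mathcal{F}_{P_n}|$, you only immediately get $|\mathcal{F}_{T_n}|^{1/n}\to\psi$. The conjecture asserts the stronger ratio limit $|\mathcal{F}_{T_{n+1}}|/|\mathcal{F}_{T_n}|\to\psi$, and a sandwich of the form $c_n\le a_n\le n^2 c_n$ with $c_{n+1}/c_n\to\psi$ does \emph{not} force $a_{n+1}/a_n\to\psi$ in general (the ratio could oscillate within a polynomial window). Your appeal to ``a standard argument'' does not quite close this; one would need some additional monotonicity or smoothness property of $n\mapsto|\mathcal{F}_{T_n}|$, which is itself not obvious. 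This is a second obstacle on top of the main one you identified.
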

\subsection{Maximum Nullity}\label{subsec:prelim_max_nullity}
Given a graph $G\in\mathbb{G}$, let $S(G)$ denote the set of all symmetric matrices $A$ such that $a_{ij}\neq 0$ if and only if $\{i,j\}\in E$.
There are no conditions on the diagonal entries of $A$, which allows for diagonal translation.
Hence, the maximum multiplicity of an eigenvalue among all $A\in S(G)$ can be viewed as the maximum nullity among all $A\in S(G)$.
The \emph{maximum nullity} of $G$, denoted $\M(G)$, is the maximum nullity among all $A\in S(G)$ and is an important relaxation of the inverse eigenvalue problem~\cite{Hogben2022}. 
A related parameter is the \emph{minimum rank} of $G$, denoted $\mr(G)$, which is defined as the minimum rank among all $A\in S(G)$.
By the rank-nullity theorem, see~\cite[Section 0.4.4]{Horn2012}, it is clear that $\mr(G)=n-\M(G)$. 

In~\cite{AIM2008}, the authors show that $\M(G)\leq\Z(G)$ for all graphs $G$. 
Moreover, for graphs of order six or less, it was shown that $\M(G)=\Z(G)$.
These results generated a lot of interest and were extended in~\cite{DeLoss2008}, where the authors compute the minimum rank of all graphs of order seven.
By examination, it can be seen that $\M(G)=\Z(G)$ for all graphs of order seven or less. 
More recently, the maximum nullity for all graphs of order eight was determined~\cite{Barrett2025}.
In particular, the authors identify seven graphs $E_{1},E_{2},\ldots,E_{7}$ of order eight such that the following result holds.
\begin{theorem}[{\cite[Theorem 1.3]{Barrett2025}}]\label{thm:max_nullity8}
Let $\mathcal{E} = \left\{E_{1},E_{2},\ldots,E_{7}\right\}$.
Then, any graph $G$ of order eight satisfies $M(G)<Z(G)$ if and only if $G\in\mathcal{E}$.
\end{theorem}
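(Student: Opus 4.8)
The plan is to treat Theorem~\ref{thm:max_nullity8} as a finite classification verified by exhaustive computation together with structural reductions. There are only finitely many graphs of order eight --- $12{,}346$ up to isomorphism --- and by the bound $\M(G)\le\Z(G)$ from~\cite{AIM2008}, for each such $G$ it suffices to decide whether equality holds or whether $\M(G)\le\Z(G)-1$. First I would reduce to connected graphs: both $\M$ and $\Z$ are additive over the connected components of a graph, and since $\M(H)=\Z(H)$ is already known for every graph $H$ of order at most seven~\cite{AIM2008,DeLoss2008}, any disconnected graph of order eight automatically satisfies $\M(G)=\Z(G)$. This leaves the connected graphs of order eight to be examined one by one.

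Second, I would compute $\Z(G)$ exactly for every connected graph of order eight using one of the integer programming formulations discussed in this article (the Fort Cover Model or the Infection Model); this is the routine direction and produces a table of zero forcing numbers. Third, for the lower bound $\M(G)\ge\Z(G)$ I would, for the overwhelming majority of these graphs, exhibit an explicit matrix $A\in S(G)$ of nullity $\Z(G)$. Here I would lean on the standard minimum-rank toolbox: cut-vertex reduction formulas together with the vertex-sum and edge-sum constructions of Section~\ref{subsec:prelim_graph_defs}, Cartesian-product bounds, orthogonal representations, and the catalogued values of $\mr(H)$ for all $H$ of order at most seven. Each successful construction certifies $\M(G)=\Z(G)$, and these graphs are then excluded from $\mathcal{E}$.

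The crux --- and the step I expect to be the main obstacle --- is the short list of graphs for which no nullity-$\Z(G)$ matrix exists: for each of these I must prove the strict inequality $\M(G)\le\Z(G)-1$, equivalently the tight lower bound $\mr(G)\ge n-\Z(G)+1$. Because minimum rank is not known to be efficiently computable, this requires graph-specific algebraic arguments rather than a single algorithm --- combining the minimum-degree and clique-cover lower bounds on $\mr$, the behavior of minimum rank under vertex deletion and under the graph sums above, and, where needed, a direct case analysis of the rank patterns available to a matrix in $S(G)$ that would achieve the putative nullity, deriving a contradiction. Finally I would confirm that exactly seven graphs resist the matrix construction, verify the strict bound for each, and record them as $E_1,\dots,E_7$; this simultaneously establishes the ``only if'' direction (every other order-eight graph has $\M(G)=\Z(G)$ by the explicit constructions) and the ``if'' direction (each $E_i$ satisfies $\M(E_i)<\Z(E_i)$ by its certificate). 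Keeping the enumeration provably complete and making the individual minimum-rank certificates independently checkable is the part demanding the most care.
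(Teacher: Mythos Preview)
This theorem is not proved in the present paper: it is quoted verbatim from~\cite{Barrett2025} and used only as background for the numerical experiments in Section~\ref{sec:num_exp}. There is therefore no proof here to compare your proposal against. Your outline is a reasonable high-level sketch of how a finite classification of this type is typically established --- exhaustive enumeration, computation of $\Z(G)$, explicit witness matrices for equality, and ad hoc minimum-rank upper bounds for the exceptional graphs --- and it is broadly in the spirit of the methods in~\cite{Barrett2025}, but any detailed assessment of correctness would have to be made against that source rather than this one.
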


The graphs in Theorem~\ref{thm:max_nullity8} are the smallest order graphs for which there is a gap between the maximum nullity and the zero forcing number. 
In particular, $M(E_{i})=Z(E_{i})-1$, for all $1\leq i\leq 7$. 
Interestingly, it is known that $\M(G) < \Z(G)$ for most graphs; indeed, $\Z(G)-\M(G)\geq 0.14n$ for almost all graphs of order $n$ sufficiently large~\cite{Ekstrand2013,Hall2010}.
This observation motivates interest in lower bounds on the maximum nullity of a graph. 
In~\cite{Holst2008}, it is shown that every $k$-connected graph has a maximum nullity of at least $k$, in~\cite{Fallat2007} it is shown that $\M(G)\geq n - \cc(G)$, and in~\cite{Johnson1999} it is shown that $\M(G)\geq \max\{p-q\}$ such that deleting $q$ vertices splits $G$ into $p$ paths. 
In~\cite{Cameron2023}, the following lower bounds are conjectured for the maximum nullity of a graph.
\begin{conjecture}[{\cite[Question 6.1]{Cameron2023}}]\label{con:mn_lower_bounds}
Let $G\in\mathbb{G}$.
Then,
\[
\ft(G) \leq \Z^{*}(G) \leq \M(G).
\]
\end{conjecture}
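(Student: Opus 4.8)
Since the inequality $\ft(G)\le\Z^{*}(G)$ is exactly linear-programming duality between the fort-packing LP and the fractional fort-cover LP (an integral packing is feasible for the packing LP, whose optimum equals that of the cover LP, which is $\Z^{*}(G)$ by strong duality; this is \cite[Remark 2.5]{Cameron2023}), the real content of the conjecture is the inequality $\Z^{*}(G)\le\M(G)$, and that is what I would try to prove. The plan is to first pass to the dual description of $\Z^{*}(G)$: by strong LP duality, $\Z^{*}(G)$ equals the maximum of $\sum_{F\in\mathcal{F}_{G}}y_{F}$ over all $y\ge 0$ with $\sum_{F\ni v}y_{F}\le 1$ for every $v\in V$, i.e.\ the fractional packing number of $\mathcal{F}_{G}$. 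So it would suffice to produce, from any fractional fort packing of total weight $t$, a matrix $A\in S(G)$ with $\dim\ker A\ge t$. Specialized to an integral packing $F_{1},\dots,F_{k}$ of pairwise disjoint forts (which one may take to be minimal forts, since shrinking preserves disjointness) this would already re-prove $\ft(G)\le\M(G)$, and I would treat that case first.

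The structural fact that drives the construction is the companion of the fort-cover correspondence: if $A\in S(G)$ and $0\ne x\in\ker A$, then $\operatorname{supp}(x)$ is a fort of $G$, since for $w\notin\operatorname{supp}(x)$ the equation $(Ax)_{w}=0$ reads $\sum_{u\in N(w)}a_{wu}x_{u}=0$, so $w$ cannot have exactly one neighbour in $\operatorname{supp}(x)$ (that lone term $a_{wu}x_{u}$ would be a nonzero product). The idea is to run this in reverse for disjoint forts $F_{1},\dots,F_{k}$: seek null vectors $x^{(1)},\dots,x^{(k)}$ with $\operatorname{supp}(x^{(i)})\subseteq F_{i}$, which are automatically linearly independent because the supports are disjoint. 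Imposing $Ax^{(i)}=0$ for all $i$ splits into: for $w\in F_{i}$, a single equation that merely pins down the free diagonal entry $a_{ww}$ (each $w$ lies in at most one fort, so no conflict); and for $w\notin F_{i}$ with $N(w)\cap F_{i}\ne\emptyset$, the homogeneous equation $\sum_{u\in N(w)\cap F_{i}}a_{wu}x^{(i)}_{u}=0$, which has at least two terms because $F_{i}$ is a fort and is therefore solvable with all coefficients nonzero whenever the corresponding edge-entries are otherwise unconstrained. Edges inside a single fort feed only the two diagonal-pinning equations of that fort, and an edge from a non-fort vertex into a fort feeds only one equation; neither causes trouble, and edges with no endpoint in any fort are free.

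The main obstacle is precisely the edges joining two \emph{different} forts: an edge $\{u,w\}$ with $u\in F_{i}$, $w\in F_{j}$, $i\ne j$, occurs in the equation indexed by $(w,i)$ and in the equation indexed by $(u,j)$, so the $k$ null-vector conditions are coupled through a single linear system in the cross-fort edge-entries (together with the values of the $x^{(i)}$ on the forts), and one must show this system always has a solution in which every such edge-entry is nonzero. The incidence pattern is benign --- each such entry occurs in at most two equations, which makes the system ``flow-like'' --- but I see no \emph{a priori} reason it must always be consistent with full support, and a naive dimension count is far too weak (for an edgeless graph it would predict infeasibility, yet $A=0_{n\times n}$ works). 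Closing this gap --- presumably by a careful combinatorial choice of the $x^{(i)}$ and the edge-entries, or by an inductive fort-by-fort construction, or by a genericity argument over $S(G)$ --- together with the passage from integral packings to fractional ones (e.g.\ by clearing denominators, at the price of overlapping supports and hence a span-dimension estimate in place of the disjointness trick), is the crux. The difficulty of exactly this step is, I suspect, why the statement remains a conjecture, corroborated only by its verification for all graphs of small order.
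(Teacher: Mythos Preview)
The statement you were asked to prove is labeled a \emph{conjecture} in the paper, and the paper does not prove it. The only treatment it receives is computational: the authors verify $\ft(G)\le\Z^{*}(G)\le\M(G)$ for all graphs of order at most eight (using the determination of $\M(G)$ for order-eight graphs from~\cite{Barrett2025}) and for a family of vertex and edge sums of the exceptional graphs $E_{1},\ldots,E_{7}$. There is no argument in the paper to compare your proposal against.

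That said, your write-up is appropriate for an open problem. You correctly isolate $\Z^{*}(G)\le\M(G)$ as the substantive half (the other inequality is indeed the LP-duality observation recorded in~\cite[Remark 2.5]{Cameron2023}), and the null-vector/fort correspondence you invoke is the right structural link. Your identification of the obstruction---that edges between distinct forts couple the would-be null vectors through shared off-diagonal entries of $A$, and that no obvious consistency or genericity argument closes this---is exactly where the difficulty lies, and your closing sentence matches the paper's own stance: the inequality is supported by small-order evidence but remains unproved.
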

\section{Infection Model}\label{sec:infection}
The Infection Model was first proposed in~\cite{Brimkov2019zf} for computing the zero forcing number of a graph. 
In this section, we introduce a similar model with various objective functions for computing the zero forcing number, minimum propagation time, and throttling number of a graph. 

Given $G\in\mathbb{G}$, let $A$ denote the set of antiparallel arcs, that is, for each $\{u,v\}\in E$, we have $(u,v)\in A$ and $(v,u)\in A$.
Also, let $T$ denote the maximum number of time steps necessary to reach the closure, given any zero forcing set.
In general, $T$ can be set to $(n-1)$ but there may be sharper bounds that can be given for specific graphs. 
We denote by $\im{G}{T}$ the Infection Model for graph $G$ and maximum number of time steps $T$, which is shown in~\eqref{eq:im_obj}--\eqref{eq:im_const8}.
Note that the original Infection Model from~\cite{Brimkov2019zf} did not include constraint~\eqref{eq:im_const4} nor the integer variable $z$. 
Here, we let $[T]=\{1,\ldots,T\}$.
Also, the binary variable $s_{v}$ indicates whether the vertex $v$ is in the zero forcing set; the integer variable $x_{v}$ indicates at what time step the vertex $v$ is forced; the binary variable $y_{a}$, where $a=(u,v)\in A$, indicates whether $u$ forces $v$; the integer variable $z$ provides an upper bound on the propagation time of the zero forcing set.

\begin{mini!}
	{}{\sum_{v\in V}s_{v}}{}{}\label{eq:im_obj}
	\addConstraint{s_{v}+\sum_{a=(u,v)\in A}y_{a}}{=1,~\quad\forall v\in V}\label{eq:im_const1}
	\addConstraint{x_{u}-x_{v}+(T+1)y_{a}}{\leq T,~\quad\forall a=(u,v)\in A}\label{eq:im_const2}
	\addConstraint{x_{w}-x_{v}+(T+1)y_{a}}{\leq T,~\quad\forall a=(u,v)\in A,~w\in N(u)\setminus\{v\}}\label{eq:im_const3}
	\addConstraint{x_{v}-z}{\leq 0,~\quad\forall v\in V}\label{eq:im_const4}
	\addConstraint{s_{v}}{\in\{0,1\},~\quad\forall v\in V}\label{eq:im_const5}
	\addConstraint{x_{v}}{\in[T]\cup\{0\},~\quad\forall v\in V}\label{eq:im_const6}
	\addConstraint{y_{a}}{\in\{0,1\},~\quad\forall a\in A}\label{eq:im_const7}
	\addConstraint{z}{\in[T]\cup\{0\}}\label{eq:im_const8}
\end{mini!}
\subsection{Feasible solutions}\label{subsec:infection_feasible}
We say that $(s,x,y,z)$ is a feasible solution of $\im{G}{T}$ if constraints~\eqref{eq:im_const1}--\eqref{eq:im_const8} are satisfied. 
Furthermore, we denote by $\feas{\im{G}{T}}$ the set of all feasible solutions of $\im{G}{T}$.
The following result shows that the feasible solutions of $\im{G}{T}$ correspond to a zero forcing game on the graph $G$, where the initial set of filled vertices is a zero forcing set and no more than $T$ time steps are used.
Recall from Section~\ref{subsec:prelim_zero_forcing}, we represent a zero forcing game by a collection of subsets $C^{(t)}$ and $C^{[t]}$, where $0\leq t\leq T$, and a collection of forces $\phi(C)$.
In particular, where $C=C^{(0)}=C^{[0]}$ is the set of initially filled vertices, $C^{(t)}$ denotes the set of vertices that are forced at time step $t$, $C^{[t]}$ denotes the set of all filled vertices at time step $t$, and $\phi(C)$ denotes the forces that produce $\cl{C}$. 
We denote by $\mathcal{Z}(G,T)$ the family of all zero forcing games on the graph $G$, where the initial filling is a zero forcing set and no more than $T$ time steps are used. 
\begin{theorem}\label{thm:im_feas1}
Let $G\in\mathbb{G}$.
If $(s,x,y,z)\in\feas{\im{G}{T}}$, then $C=\left\{v\in V\colon s_{v}=1\right\}$ is a zero forcing set corresponding to a zero forcing game in $\mathcal{Z}(G,T)$, where $\pt(G,C)\leq z\leq T$.
\end{theorem}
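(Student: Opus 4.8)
The plan is to show that any feasible solution $(s,x,y,z)$ encodes a valid zero forcing game, by reading off the time‑step structure directly from the $x$ variables and the forcing assignment from the $y$ variables. First I would define, for each $t \in \{0,1,\ldots,T\}$, the set $C^{(t)} = \{v \in V : x_v = t\}$ and $C^{[t]} = \{v \in V : x_v \le t\}$, and set $C = C^{(0)} = C^{[0]}$. By constraint~\eqref{eq:im_const1}, each vertex $v$ is either in $C$ (when $s_v = 1$, and then we should check $x_v = 0$ is forced — this needs a short argument from~\eqref{eq:im_const2}) or is forced by exactly one neighbor $u$ with $y_{(u,v)} = 1$, which determines the (unique) force $u \to v$; collect these into $\phi(C)$. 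I would note $C^{[t]} = C^{[t-1]} \cup C^{(t)}$ is immediate, and every vertex lies in exactly one $C^{(t)}$ since $x_v$ takes exactly one value in $[T]\cup\{0\}$.

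The heart of the proof is verifying the color‑change‑rule condition: if $v \in C^{(t)}$ with $t \ge 1$, then there is a neighbor $u$ with $y_{(u,v)} = 1$, and I must show $u$ together with all its neighbors except $v$ lie in $C^{[t-1]}$, i.e. have $x$‑value $\le t-1$. Setting $y_{(u,v)} = 1$ in~\eqref{eq:im_const2} gives $x_u - x_v + (T+1) \le T$, hence $x_u \le x_v - 1 = t - 1$; and~\eqref{eq:im_const3} gives, for every $w \in N(u)\setminus\{v\}$, $x_w \le x_v - 1 = t-1$. So $u$ and all its other neighbors are filled by time $t-1$, exactly as required by the standard rule. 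This also shows that once a vertex is in $C$ its $x$‑value can consistently be taken to be $0$, and that the forcing digraph is acyclic, so the process is well defined.

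Next I would argue that $C$ is in fact a zero forcing set with the claimed propagation bound. Every vertex $v \notin C$ has $s_v = 0$, so by~\eqref{eq:im_const1} it is forced, i.e. $x_v \ge 1$ and $v \in C^{(x_v)} \subseteq C^{[T]}$; combined with $C \subseteq C^{[T]}$ this gives $C^{[T]} = V$, so the closure is reached within $T$ steps and $C$ is a zero forcing set. Since the forces in $\phi(C)$ respect the time steps recorded by $x$ (each vertex forced at time $t$ has all prerequisites filled by $t-1$), the game genuinely lies in $\mathcal{Z}(G,T)$, and the realized propagation time is at most $\max_{v} x_v$. Finally,~\eqref{eq:im_const4} gives $x_v \le z$ for all $v$, so $\pt(G,C) \le \max_v x_v \le z$, and $z \le T$ by~\eqref{eq:im_const8}; this yields the chain $\pt(G,C) \le z \le T$.

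I expect the main obstacle to be a slightly delicate bookkeeping point rather than a conceptual one: one must check that the $y$‑based forcing assignment is consistent with the "all possible independent forces in one step" convention used to define $\pt(G,C)$, so that the realized propagation time is genuinely bounded by $\max_v x_v$ and not larger. The point is that the standard (greedy) propagation only fills vertices \emph{faster} than any valid schedule, so if the schedule given by $x$ fills everything by step $\max_v x_v$, the canonical propagation time is no larger; I would spell this monotonicity out carefully (or cite the determinism of closure from Section~\ref{subsec:prelim_zero_forcing}). A secondary fussy point is the base case $s_v = 1 \Rightarrow x_v = 0$: strictly, nothing in the constraints forbids $x_v > 0$ for $v \in C$, but then $v$ is "forced at time $x_v$" vacuously with no incoming $y$‑arc, which either contradicts the color‑change requirement in the definition of a zero forcing game or can simply be normalized by resetting $x_v := 0$ without affecting feasibility or the bound $\pt(G,C) \le z$; I would handle this with a one‑line normalization remark.
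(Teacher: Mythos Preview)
Your approach is essentially the same as the paper's: define the time-step sets from the $x$ variables, read the forcing assignment from the $y$ variables, and use constraints~\eqref{eq:im_const1}--\eqref{eq:im_const4} exactly as you describe to verify the color-change rule and the bound $\pt(G,C)\le z\le T$.

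Two small remarks. First, your parenthetical that ``$s_v=1\Rightarrow x_v=0$ \ldots\ needs a short argument from~\eqref{eq:im_const2}'' is not quite right: when $s_v=1$, all incoming $y$-arcs vanish by~\eqref{eq:im_const1}, and then~\eqref{eq:im_const2} becomes $x_u-x_v\le T$, which is vacuous. So nothing in the constraints forces $x_v=0$; the normalization you propose at the end is the correct fix, and the paper sidesteps the issue entirely by defining $C^{(0)}=\{v:s_v=1\}$ (via $s$) rather than $\{v:x_v=0\}$ (via $x$), taking $C^{(t)}=\{v:x_v=t\}$ only for $t\ge 1$. Second, your monotonicity observation (that the greedy propagation time $\pt(G,C)$ is at most the time of any valid schedule, hence at most $\max_v x_v$) is a genuine point that the paper leaves implicit; it is worth the one line you propose to spend on it.
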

\begin{proof}
Let $(s,x,y,z)\in\feas{\im{G}{T}}$.
Then, define $C=C^{(0)} = C^{[0]} = \left\{v\in V\colon s_{v}=1\right\}$ and
\begin{align*}
C^{(t)} &= \left\{v\in V\colon x_{v}=t\right\} \\
C^{[t]} &= C^{[t-1]}\cup C^{(t)},~1\leq t\leq T.
\end{align*}
Further, define $\phi(C)$ by the collection of forces $u\rightarrow v$ such that $y_{a}=1$, where $a=(u,v)$.
Note that every vertex of $G$ is in exactly one set $C^{(t)}$, where $0\leq t\leq T$.
Also, by constraint~\eqref{eq:im_const1}, every $v\in V$ is either in $C$ or corresponds to exactly one arc $a=(u,v)$ such that $y_{a}=1$.
If $y_{a}=1$, then constraints~\eqref{eq:im_const2}--\eqref{eq:im_const3} imply that
\[
x_{v} \geq 1 + x_{u}~\textrm{and}~x_{v}\geq 1+x_{w},
\]
for all $w\in N(u)\setminus\{v\}$.
Hence, $v\in C^{(t)}$ for some $t\in[T]$ and
\[
x_{u}\leq t-1~\textrm{and}~x_{w}\leq t-1,
\]
for all $w\in N(u)\setminus\{v\}$, which implies that $u$ and all of its neighbors except for $v$ are in $C^{[t-1]}$.
Finally, constraint~\eqref{eq:im_const4} implies that the closure of $C$ occurs in at most $z$ time steps. 

Therefore, every vertex of $G$ is either initially filled in $C$ or is forced later in $C^{(t)}$, for exactly one $t\in[T]$.
Furthermore, if $v\in C^{(t)}$, then $v$ must be forced by exactly one if its neighbors $u$ such that $u$ and all of its neighbors except for $v$ are in $C^{[t-1]}$, and the forcing $u\rightarrow v$ is stored in $\phi(C)$. 
It follows that $C$ is a zero forcing set whose closure occurs in at most $z\leq T$ time steps, and $\phi(C)$ is a collection of forces that produce $\cl{C}$.
\end{proof}

Next, we show that every zero forcing game in $\mathcal{Z}(G,T)$ corresponds to a feasible solution of $\im{G}{T}$.
\begin{theorem}\label{thm:im_feas2}
Let $G\in\mathbb{G}$.
For each zero forcing set $C$ corresponding to a zero forcing game in $\mathcal{Z}(G,T)$, there is a feasible solution $(s,x,y,z)\in\feas{\im{G}{T}}$ such that $\abs{C} = \sum_{v\in V}s_{v}$ and $z=\pt(G,C)\leq T$.
\end{theorem}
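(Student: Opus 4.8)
The plan is to run the argument of Theorem~\ref{thm:im_feas1} in reverse: given a zero forcing game in $\mathcal{Z}(G,T)$, I will read off the variables $(s,x,y,z)$ from the combinatorial data and then verify each constraint~\eqref{eq:im_const1}--\eqref{eq:im_const8}. Concretely, let $C$ be the zero forcing set of the game, with the associated subsets $C^{(t)}$, $C^{[t]}$ for $0\le t\le T$ and a collection of forces $\phi(C)$ producing $\cl{C}=V$. First I would set $s_v=1$ if $v\in C$ and $s_v=0$ otherwise; set $x_v=t$ where $t$ is the unique index with $v\in C^{(t)}$ (so $x_v=0$ precisely for $v\in C$); set $y_a=1$ for $a=(u,v)$ exactly when the forcing $u\rightarrow v$ belongs to $\phi(C)$, and $y_a=0$ otherwise; and set $z=\pt(G,C)$, which by definition of the game is the smallest $t^*$ with $C^{[t^*]}=V$, hence at most $T$.

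Next I would check the constraints. Constraint~\eqref{eq:im_const1}: a vertex $v$ is either in $C$ (so $s_v=1$ and, since every vertex lies in exactly one $C^{(t)}$ and is forced by exactly one neighbor when $t\ge 1$, no arc into $v$ is active) or it is forced at some time step by exactly one neighbor $u$ (so $s_v=0$ and exactly one $y_{(u,v)}=1$); either way the sum is $1$. For constraints~\eqref{eq:im_const2}--\eqref{eq:im_const3}, fix $a=(u,v)$. If $y_a=0$ the left side is $x_u-x_v\le T-0\le T$ (resp.\ $x_w-x_v\le T$), using $x_u,x_w\in\{0,\dots,T\}$ and $x_v\ge 0$; so the constraint holds trivially. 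If $y_a=1$, then $u\rightarrow v$ is a force in the game at time step $x_v=:t\ge 1$, which by the definition of a zero forcing game means $u$ and all its neighbors except $v$ lie in $C^{[t-1]}$, i.e.\ $x_u\le t-1$ and $x_w\le t-1$ for all $w\in N(u)\setminus\{v\}$; then $x_u-x_v+(T+1)\le (t-1)-t+(T+1)=T$ and likewise for $x_w$. Constraint~\eqref{eq:im_const4} follows because $z=\pt(G,C)$ is the smallest index with $C^{[z]}=V$, so every vertex is filled by time $z$, i.e.\ $x_v\le z$ for all $v$. The domain constraints~\eqref{eq:im_const5}--\eqref{eq:im_const8} are immediate from the construction, since all $C^{(t)}$ with $t>z$ are empty and $z\le T$. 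Finally $\abs{C}=\sum_{v\in V}s_v$ by definition of $s$, and $z=\pt(G,C)\le T$ as noted.

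The only subtlety — and the step I would be most careful about — is the case analysis in constraints~\eqref{eq:im_const2}--\eqref{eq:im_const3} when $y_a=0$: one must confirm the inequality is not vacuously violated, which relies on the variable ranges $x_v\in\{0,\dots,T\}$ and on having chosen $T$ large enough (at least $n-1$, or a valid graph-specific bound) that the game genuinely terminates within $T$ steps, so that every $x_v$ is well-defined and at most $T$. A second small point is consistency of $\phi(C)$: since a vertex may in principle be forceable by several neighbors, I must use the specific force recorded in the game's $\phi(C)$ to define $y$, so that constraint~\eqref{eq:im_const1} sees exactly one active incoming arc per forced vertex; this is guaranteed by the definition of a zero forcing game in Section~\ref{subsec:prelim_zero_forcing}, where each forced vertex has exactly one designated forcing neighbor. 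With these observations in place the verification is routine, and the theorem follows.
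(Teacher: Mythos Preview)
Your proposal is correct and follows essentially the same approach as the paper: construct $(s,x,y,z)$ directly from the combinatorial data of the zero forcing game and then verify constraints~\eqref{eq:im_const1}--\eqref{eq:im_const8}. If anything, your verification is more thorough than the paper's, since you explicitly handle the $y_a=0$ case in constraints~\eqref{eq:im_const2}--\eqref{eq:im_const3}, which the paper leaves implicit.
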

\begin{proof}
Consider the collection of subsets $C^{(t)}$ and $C^{[t]}$, where $0\leq t\leq T$, and the collection of forces $\phi(C)$ that represent the given zero forcing game.
For each $v\in V$, define $s_{v}=1$ if $v\in C=C^{(0)}=C^{[0]}$; otherwise, set $s_{v}=0$. 
Also, for each $v\in V$, define $x_{v}=t$ if $v\in C^{(t)}$.
For each $a=(u,v)\in A$, define $y_{a}=1$  if $u\rightarrow v$ is in $\phi(C)$; otherwise, set $y_{a}=0$.
Finally, set $z$ to the time step at which the closure of $C$ occurs. 

For every $v\in V$, $v\in C$ or there is exactly one neighbor $u$ such that $u\rightarrow v$ is in $\phi(C)$. 
Hence, constraint~\eqref{eq:im_const1} holds.
Furthermore, if $u\rightarrow v$ is in $\phi(C)$, then $v\in C^{(t)}$ for exactly one $t\in[T]$ and $u$ and all of its neighbors except for $v$ are in $C^{[t-1]}$.
Hence, constraints~\eqref{eq:im_const2}--\eqref{eq:im_const3} hold. 
Since $z$ is set to the time step at which the closure of $C$ occurs, it follows that constraint~\eqref{eq:im_const4} holds. 
\end{proof}
\subsection{Optimal solutions}\label{subsec:infection_optimal}
We say that $(s,x,y,z)\in\feas{\im{G}{T}}$ is an optimal solution if it minimizes the objective function in~\eqref{eq:im_obj}.
We denote by $\opt{\im{G}{T}}$ the set of all optimal solutions of $\im{G}{T}$.
The following result shows that the optimal solutions of $\im{G}{T}$ correspond to minimum zero forcing sets of $G$. 
\begin{corollary}\label{cor:im_opt}
Let $G\in\mathbb{G}$ and $T=(n-1)$. 
If $(s,x,y,z)\in\opt{\im{G}{T}}$, then $C=\left\{v\in V\colon s_{v}=1\right\}$ is a minimum zero forcing set of $G$.
\end{corollary}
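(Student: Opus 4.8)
The plan is to prove that the optimal objective value of $\im{G}{T}$ equals $\Z(G)$ when $T = n-1$; the corollary then follows at once, because Theorem~\ref{thm:im_feas1} guarantees that the vertex set $C = \{v\in V\colon s_v = 1\}$ read off from any feasible solution is a zero forcing set, and an optimal solution has $\sum_{v\in V} s_v = \Z(G)$, so $C$ is a zero forcing set of minimum cardinality.

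For the lower bound on the optimal value, I would invoke Theorem~\ref{thm:im_feas1} directly: every $(s,x,y,z)\in\feas{\im{G}{T}}$ yields a zero forcing set $C$ with $\abs{C} = \sum_{v\in V} s_v$, so by the definition of $\Z(G)$ we have $\sum_{v\in V} s_v \geq \Z(G)$ at every feasible point, and hence the minimum in~\eqref{eq:im_obj} is at least $\Z(G)$.

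For the matching upper bound, I would start from a minimum zero forcing set $C^{*}$, so that $\abs{C^{*}} = \Z(G)$, and first check that $\pt(G,C^{*}) \leq n-1 = T$. This is the one step requiring a genuine (though short) argument: if $C^{*} = V$ then $\pt(G,C^{*}) = 0$, and otherwise at least one white vertex is filled at each time step of the propagation-time forcing process until the closure is reached, so the process uses at most $n - \abs{C^{*}} \leq n - 1$ time steps. Therefore $C^{*}$, together with a valid collection of forces and the parallel time stamps realizing $\pt(G,C^{*})$, determines a zero forcing game in $\mathcal{Z}(G,T)$, and Theorem~\ref{thm:im_feas2} produces a feasible solution with $\sum_{v\in V} s_v = \abs{C^{*}} = \Z(G)$. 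Hence the optimal value is at most $\Z(G)$.

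Combining the two bounds gives that the optimal value of $\im{G}{T}$ is exactly $\Z(G)$. Consequently, any $(s,x,y,z)\in\opt{\im{G}{T}}$ satisfies $\sum_{v\in V} s_v = \Z(G)$, and by Theorem~\ref{thm:im_feas1} the associated set $C$ is a zero forcing set of cardinality $\Z(G)$, i.e., a minimum zero forcing set. The only real obstacle is the estimate $\pt(G,C^{*}) \leq n-1$, which is precisely what makes the choice $T = n-1$ large enough to capture every minimum zero forcing game; the rest is a direct appeal to Theorems~\ref{thm:im_feas1} and~\ref{thm:im_feas2}.
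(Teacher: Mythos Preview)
Your proof is correct and follows essentially the same approach as the paper: both arguments use Theorem~\ref{thm:im_feas1} to show the optimal value is at least $\Z(G)$ and Theorem~\ref{thm:im_feas2} applied to a minimum zero forcing set to show it is at most $\Z(G)$. Your version is actually slightly more careful than the paper's, since you explicitly verify $\pt(G,C^{*})\leq n-1$ to ensure the minimum zero forcing game lies in $\mathcal{Z}(G,T)$, a point the paper's proof takes for granted.
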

\begin{proof}
Let $(s,x,y,z)\in\opt{\im{G}{T}}$.
Then, $(s,x,y,z)\in\feas{\im{G}{T}}$ and is minimizes the objective function in~\eqref{eq:im_obj}.
Define $C=\left\{v\in V\colon s_{v}=1\right\}$. 
Since $(s,x,y,z)$ is a feasible solution, Theorem~\ref{thm:im_feas1} implies that $C$ is a zero forcing set corresponding to a zero forcing game in $\mathcal{Z}(G,T)$.
Therefore, $\Z(G)\leq \abs{C}$.
For the sake of contradiction, suppose that $\Z(G) < \abs{C}$.
Then, there is a zero forcing  set $\hat{C}$ corresponding to a zero forcing game in $\mathcal{Z}(G,T)$ such that
\[
\abs{\hat{C}} = \Z(G) < \abs{C}
\]
However, Theorem~\ref{thm:im_feas2} implies that there exists a feasible solution $(\hat{s},\hat{x},\hat{y},\hat{z})\in\feas{\im{G}{T}}$ such that $\abs{\hat{C}}=\sum_{v\in V}\hat{s}_{v}$, which contradicts the optimality of $(s,x,y,z)\in\opt{\im{G}{T}}$.
\end{proof}

In what follows, we make use of two other objective functions to compute the minimum propagation time and throttling number of a graph.
Specifically, let $\impt{G}{T}$ denote the model in~\eqref{eq:im_obj}--\eqref{eq:im_const8}, where~\eqref{eq:im_obj} is replaced by the objective function
\begin{equation}\label{eq:impt_obj}
\textrm{minimize}~\sum_{v\in V}s_{v} + \frac{z}{2T}.
\end{equation}
Also, let $\imth{G}{T}$ denote the model in~\eqref{eq:im_obj}--\eqref{eq:im_const8}, where~\eqref{eq:im_obj} is replaced by the objective function
\begin{equation}\label{eq:imth_obj}
\textrm{minimize}~\sum_{v\in V}s_{v} + z.
\end{equation}
Note that $\im{G}{T}$, $\impt{G}{T}$, and $\imth{G}{T}$ all have the same feasible solutions but potentially different optimal solutions. 
The following result shows that the optimal solutions of $\impt{G}{T}$ correspond to minimum zero forcing sets with minimum propagation time. 
\begin{corollary}\label{cor:impt_opt}
Let $G\in\mathbb{G}$ and $T=(n-1)$. 
If $(s,x,y,z)\in\opt{\impt{G}{T}}$, then $C=\left\{v\in V\colon s_{v}=1\right\}$ is a minimum zero forcing set of $G$ such that $\pt(G)=\pt(G,C)=z$.
\end{corollary}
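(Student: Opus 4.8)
The plan is to exploit the fact that the weight $\tfrac{1}{2T}$ on $z$ in the objective~\eqref{eq:impt_obj} is small enough to make the minimization lexicographic: since every feasible solution satisfies $0\le z\le T$ by constraints~\eqref{eq:im_const4} and~\eqref{eq:im_const8}, we have $0\le\tfrac{z}{2T}\le\tfrac12<1$, so an optimal solution of $\impt{G}{T}$ first minimizes the integer quantity $\sum_{v\in V}s_{v}$ and only then minimizes $z$. I would therefore split the argument into a cardinality part and a propagation-time part.

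First I would establish that $C$ is a minimum zero forcing set, arguing essentially as in Corollary~\ref{cor:im_opt}. By Theorem~\ref{thm:im_feas1}, $C$ is a zero forcing set, so $\Z(G)\le\abs{C}$. If $\Z(G)<\abs{C}$, pick a minimum zero forcing set $\hat{C}$; its eager zero forcing game has propagation time at most $n-1=T$ (a minimum zero forcing set is non-empty and at least one vertex is forced per time step), hence lies in $\mathcal{Z}(G,T)$, and Theorem~\ref{thm:im_feas2} yields a feasible $(\hat{s},\hat{x},\hat{y},\hat{z})$ with $\sum_{v\in V}\hat{s}_{v}=\Z(G)$ and $\hat{z}\le T$. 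Its objective value is at most $\Z(G)+\tfrac12<\Z(G)+1\le\abs{C}\le\abs{C}+\tfrac{z}{2T}$, contradicting optimality. Hence $\abs{C}=\Z(G)$, and moreover every optimal solution has $\sum_{v\in V}s_{v}=\Z(G)$.

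Next I would pin down $z$ by a two-sided inequality. For the lower bound, Theorem~\ref{thm:im_feas1} gives $\pt(G,C)\le z$, and since $C$ is a minimum zero forcing set, the definition of $\pt(G)$ gives $\pt(G)\le\pt(G,C)\le z$. For the upper bound, let $C^{*}$ be a minimum zero forcing set realizing $\pt(G,C^{*})=\pt(G)$; its eager game lies in $\mathcal{Z}(G,T)$, so Theorem~\ref{thm:im_feas2} provides a feasible solution with objective value $\Z(G)+\tfrac{\pt(G)}{2T}$. Optimality of $(s,x,y,z)$, together with $\abs{C}=\Z(G)$, forces $\Z(G)+\tfrac{z}{2T}\le\Z(G)+\tfrac{\pt(G)}{2T}$, i.e.\ $z\le\pt(G)$. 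Combining the two bounds gives $z=\pt(G)$, and then $\pt(G)\le\pt(G,C)\le z=\pt(G)$ forces $\pt(G,C)=\pt(G)=z$.

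The argument is essentially bookkeeping once Theorems~\ref{thm:im_feas1} and~\ref{thm:im_feas2} are in hand; the only point that requires care is the scaling of the penalty term. The bound $z\le T$ is exactly what guarantees $\tfrac{z}{2T}<1$, so that adding it to the objective never perturbs the primary minimization of $\abs{C}$, while the strict positivity of the coefficient is what forces $z$ down to the genuine propagation time rather than merely to some upper bound. I expect no deeper obstacle, but I would double-check the harmless detail that $\pt(G)\le n-1=T$ so that the appeal to $\mathcal{Z}(G,T)$ in the upper-bound step is legitimate.
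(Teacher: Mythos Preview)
Your proposal is correct and follows essentially the same approach as the paper: use the bound $0\le\tfrac{z}{2T}\le\tfrac12$ to argue that the objective is lexicographic, invoke Corollary~\ref{cor:im_opt} (which you reprove inline) to conclude $C$ is a minimum zero forcing set, and then use Theorems~\ref{thm:im_feas1} and~\ref{thm:im_feas2} to sandwich $z$ between $\pt(G)$ and itself. The paper phrases the final step as a contradiction rather than a two-sided inequality, but the content is identical.
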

\begin{proof}
Let $(s,x,y,z)\in\opt{\impt{G}{T}}$.
Then, $(s,x,y,z)\in\feas{\im{G}{T}}$ and is minimizes the objective function in~\eqref{eq:impt_obj}.
Define $C=\left\{v\in V\colon s_{v}=1\right\}$. 
Since $0\leq \frac{z}{2T}\leq \frac{1}{2}$, it follows that the objective function first minimizes $\sum_{v\in V}s_{v}$ and then minimizes $\frac{z}{2T}$.
Hence, by Corollary~\ref{cor:im_opt}, $C$ is a minimum zero forcing set of $G$.
Furthermore, since $(s,x,y,z)$ is a feasible solution, Theorem~\ref{thm:im_feas1} implies that
\[
\pt(G)\leq \pt(G,C)\leq z. 
\]
For the sake of contradiction, suppose that $\pt(G)<z$. 
Then, there is a zero forcing set $\hat{C}$ corresponding to a zero forcing game in $\mathcal{Z}(G,T)$ such that 
\[
\pt(G,\hat{C})=\pt(G) < z
~\text{and}~
\abs{\hat{C}} = \Z(G).
\]
However, Theorem~\ref{thm:im_feas2} implies that there exists a feasible solution $(\hat{s},\hat{x},\hat{y},\hat{z})\in\feas{\im{G}{T}}$ such that $\hat{z}=\pt(G,\hat{C})$ and $\Z(G)=\sum_{v\in V}\hat{s}_{v}$, which contradicts the optimality of $(s,x,y,z)\in\opt{\impt{G}{T}}$.
\end{proof}
The following result shows that the optimal solutions of $\imth{G}{T}$ correspond to zero forcing sets with minimum throttling number. 
\begin{corollary}\label{cor:imth_opt}
Let $G\in\mathbb{G}$ and $T=(n-1)$.
If $(s,x,y,z)\in\opt{\imth{G}{T}}$, then $C=\left\{v\in V\colon s_{v}=1\right\}$ is a zero forcing set of $G$ such that $\th(G)=\th(G,C) = \sum_{v\in V}s_{v}+z$.
\end{corollary}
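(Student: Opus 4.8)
The plan is to follow the same template used for Corollaries~\ref{cor:im_opt} and~\ref{cor:impt_opt}: extract a zero forcing game from the optimal solution, read off an upper bound for $\th(G)$, and then preclude a strict improvement by converting a hypothetical better-throttling zero forcing set back into a feasible solution via Theorem~\ref{thm:im_feas2}. The only genuinely new ingredient is bookkeeping the additive $z$ term against the definition $\th(G,C)=\abs{C}+\pt(G,C)$.

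First I would take $(s,x,y,z)\in\opt{\imth{G}{T}}$ with $T=(n-1)$ and set $C=\{v\in V\colon s_{v}=1\}$, so that $\abs{C}=\sum_{v\in V}s_{v}$. Since $(s,x,y,z)\in\feas{\im{G}{T}}$, Theorem~\ref{thm:im_feas1} gives that $C$ is a zero forcing set with $\pt(G,C)\le z\le T$. Consequently
\[
\th(G)\le\th(G,C)=\abs{C}+\pt(G,C)\le\sum_{v\in V}s_{v}+z,
\]
i.e. the objective value of $\imth{G}{T}$ at $(s,x,y,z)$ is an upper bound for $\th(G)$.

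Next I would argue that this bound is tight. Suppose for contradiction that $\th(G)<\sum_{v\in V}s_{v}+z$, and pick $\hat{C}\subseteq V$ realizing $\th(G,\hat{C})=\abs{\hat{C}}+\pt(G,\hat{C})=\th(G)$; in particular $\hat{C}$ is a zero forcing set, since otherwise $\pt(G,\hat{C})=\infty$. Because at least one vertex is forced in every time step before closure is reached, $\pt(G,\hat{C})\le n-\abs{\hat{C}}\le n-1=T$, so $\hat{C}$ corresponds to a zero forcing game in $\mathcal{Z}(G,T)$. By Theorem~\ref{thm:im_feas2}, there is $(\hat{s},\hat{x},\hat{y},\hat{z})\in\feas{\im{G}{T}}$ with $\sum_{v\in V}\hat{s}_{v}=\abs{\hat{C}}$ and $\hat{z}=\pt(G,\hat{C})$, whose objective value for $\imth{G}{T}$ is $\sum_{v\in V}\hat{s}_{v}+\hat{z}=\abs{\hat{C}}+\pt(G,\hat{C})=\th(G)<\sum_{v\in V}s_{v}+z$, contradicting optimality of $(s,x,y,z)$. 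Hence $\th(G)=\sum_{v\in V}s_{v}+z$, and chaining this with the inequalities of the previous paragraph forces $\th(G)=\th(G,C)=\sum_{v\in V}s_{v}+z$ (which incidentally also yields $\pt(G,C)=z$).

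I do not expect a real obstacle here; the one point warranting care is the sufficiency of $T=(n-1)$, namely that the throttling-optimal set $\hat{C}$ — which, unlike in Corollaries~\ref{cor:im_opt} and~\ref{cor:impt_opt}, need not be a minimum zero forcing set — still induces a zero forcing game using at most $T$ time steps so that Theorem~\ref{thm:im_feas2} is applicable. This is exactly the bound $\pt(G,\hat{C})\le n-\abs{\hat{C}}\le n-1$ used above. It is worth emphasizing in the write-up that $C$ is here only asserted to be \emph{a} zero forcing set realizing the minimum throttling number, not a minimum zero forcing set, consistent with the statement.
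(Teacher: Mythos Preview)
Your proof is correct and follows essentially the same approach as the paper's own proof: extract a zero forcing game from the feasible solution via Theorem~\ref{thm:im_feas1} to get the upper bound $\th(G)\le\sum_{v\in V}s_{v}+z$, then derive a contradiction from a hypothetical better $\hat{C}$ via Theorem~\ref{thm:im_feas2}. Your additional justification that $\pt(G,\hat{C})\le n-\abs{\hat{C}}\le T$ so that Theorem~\ref{thm:im_feas2} applies is a point the paper glosses over, but otherwise the arguments are the same.
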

\begin{proof}
Let $(s,x,y,z)\in\opt{\imth{G}{T}}$.
Then, $(s,x,y,z)\in\feas{\im{G}{T}}$ and is minimizes the objective function in~\eqref{eq:imth_obj}.
Define $C=\left\{v\in V\colon s_{v}=1\right\}$.
Since $(s,x,y,z)$ is a feasible solution, Theorem~\ref{thm:im_feas1} implies that $C$ is a zero forcing set corresponding to a zero forcing game in $\mathcal{Z}(G,T)$, where $\pt(G,C)\leq z\leq T$. 
Therefore,
\[
\th(G)\leq \th(G,C) \leq \sum_{v\in V}s_{v} + z.
\]
For the sake of contradiction, suppose that $\th(G) < \sum_{v\in V}s_{v} + z$.
Then, there exists a zero forcing set $\hat{C}$ corresponding to a zero forcing game in $\mathcal{Z}(G,T)$ such that 
\[
\th(G,\hat{C}) = \th(G) < \sum_{v\in V}s_{v} + z.
\]
However, Theorem~\ref{thm:im_feas2} implies that there exists a feasible solution $(\hat{s},\hat{x},\hat{y},\hat{z})\in\feas{\im{G}{T}}$ such that $\abs{\hat{C}} = \sum_{v\in V}\hat{s}_{v}$ and $\pt(G,\hat{C}) = \hat{z}$, which contradicts the optimality of $(s,x,y,z)\in\opt{\imth{G}{T}}$.
\end{proof}
\section{Time Step Model}\label{sec:time_step}
The Time Step Model was first proposed in~\cite{Agra2019} for computing the zero forcing number of a graph. 
In this section, we introduce a similar model with various objective functions for computing the zero forcing number, minimum and maximum propagation times, and throttling number of a graph. 
Note that the Infection Model in~\eqref{eq:im_obj}--\eqref{eq:im_const8} is incapable of computing parameters such as the maximum propagation time since there are feasible solutions that correspond to zero forcing games where not all possible forces are applied in a single time step, see Example~\ref{ex:wrong_PT}. 
As we will see, this is not the case for our version of the Time Step Model.
\begin{example}\label{ex:wrong_PT}
Let $G$ denote the tree shown in Figure~\ref{fig:t6}.
The following is a feasible solution of $\im{G}{5}$:
\[
s = \{1,0,0,1,0,0\},~x = \{0,1,2,0,3,5\},~y = \{1,0,1,0,0,0,1,0,1,0\},
\]
where the indices of $y$ correspond to the arcs $(1,2)$, $(2,1)$, $(2,3)$, $(3,2)$, $(3,4)$, $(4,3)$, $(3,5)$, $(5,3)$, $(5,6)$, and $(6,5)$, respectively. 
In the first time step, the forcing $1\rightarrow 2$ occurs, but not $4\rightarrow 3$. 
In the second step, $2\rightarrow 3$ occurs. 
In the third, $3\rightarrow 5$ occurs. 
In the fourth, despite the possible forcing $5\rightarrow 6$, nothing occurs. 
Finally, in the fifth step, $5\rightarrow 6$ occurs. 
This suggests a maximum propagation time of $5$, but it is actually $3$.,

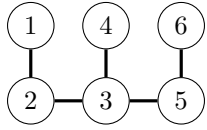
\begin{figure}[ht!]
\centering 
\begin{tikzpicture}
[nodeFilledDecorate/.style={shape=circle,inner sep=2pt,draw=black,fill=lightgray,thick},%
    nodeEmptyDecorate/.style={shape=circle,inner sep=2pt,draw=black,thick},%
    lineDecorate/.style={black,=>latex',very thick},%
    x=1cm,y=1cm,scale=1.0]
    \node[circle,draw=black,fill=white] (1) at (0,1) {$1$};
    \node[circle,draw=black,fill=white] (2) at (0,0) {$2$};
    \node[circle,draw=black,fill=white] (3) at (1,0) {$3$};
    \node[circle,draw=black,fill=white] (4) at (1,1) {$4$};
    \node[circle,draw=black,fill=white] (5) at (2,0) {$5$};
    \node[circle,draw=black,fill=white] (6) at (2,1) {$6$};

    \foreach \x/\y in {1/2,2/3,3/4,3/5,5/6}
        \draw[black,=>latex',-,very thick] (\x) -- (\y);
\end{tikzpicture}
\caption{A tree of order $6$.}
\label{fig:t6}
\end{figure}
\end{example}

Given $G\in\mathbb{G}$, let $A$ denote the set of antiparallel arcs, that is, for each $\{u,v\}\in E$, we have $(u,v)\in A$ and $(v,u)\in A$.
Also, let $T$ denote the maximum number of time steps necessary to reach the closure, given any zero forcing set.
In general, $T$ can be set to $(n-1)$ but there may be sharper bounds that can be given for specific graphs. 
We denote by $\tsm{G}{T}$ the Time Step Model for graph $G$ and maximum number of time steps $T$, which is shown in~\eqref{eq:tsm_obj}--\eqref{eq:tsm_const10}.
Note that the original Time Step Model from~\cite{Agra2019} did not include constraints~\eqref{eq:tsm_const5}--\eqref{eq:tsm_const7} nor the binary variable $z^{t}$. 
Here, we let $[T]=\{1,\ldots,T\}$.
Also, the binary variable $x^{t}_{v}$ indicates that $v$ is forced by time step $t$; the binary variable $y^{t}_{a}$, where $a=(u,v)\in A$, indicates whether $u$ forces $v$ at time step $t$; the binary variable $z^{t}$ indicates if there are any forces that occur at time step $t$.

\begin{mini!}
	{}{\sum_{v\in V}x^{0}_{v}}{}{}\label{eq:tsm_obj}
	\addConstraint{x^{0}_{v}+\sum_{t\in[T]}\sum_{a=(u,v)\in A}y^{t}_{a}}{=1,~\quad\forall v\in V}\label{eq:tsm_const1}
	\addConstraint{y^{t}_{a}}{\leq x^{t-1}_{u},~\quad\forall a=(u,v)\in A,~t\in[T]}\label{eq:tsm_const2}
	\addConstraint{y^{t}_{a}}{\leq x^{t-1}_{w},~\quad\forall a=(u,v)\in A,~w\in N(u)\setminus\{v\},~t\in[T]}\label{eq:tsm_const3}
	\addConstraint{x^{t}_{v}}{=x^{t-1}_{v} + \sum_{a=(u,v)\in A}y^{t}_{a},~\quad\forall v\in V,~t\in[T]}\label{eq:tsm_const4}
    \addConstraint{x^{t-1}_{u}-x^{t-1}_{v} + \sum_{w\in N(u)\setminus\{v\}}x^{t-1}_{w}}{\leq\sum_{a=(w,v)\in A}y^{t}_{a} + d(u)-1,~\quad\forall (u,v)\in A,~t\in[T]}\label{eq:tsm_const5}
    \addConstraint{\frac{1}{n}\sum_{v\in V}\left(x^{t}_{v}-x^{t-1}_{v}\right)-z^{t}}{\leq 0,~\quad\forall t\in[T]}\label{eq:tsm_const6}
    \addConstraint{z^{t}-\sum_{v\in V}\left(x^{t}_{v}-x^{t-1}_{v}\right)}{\leq 0,~\quad\forall t\in[T]}\label{eq:tsm_const7}
	\addConstraint{x^{t}_{v}}{\in\{0,1\},~\forall v\in V,~t\in[T]\cup\{0\}}\label{eq:tsm_const8}
	\addConstraint{y^{t}_{a}}{\in\{0,1\},~\forall a\in A,~t\in[T]}\label{eq:tsm_const9}
	\addConstraint{z^{t}}{\in\{0,1\},~\forall t\in[T]}\label{eq:tsm_const10}
\end{mini!}
\subsection{Feasible solutions}\label{subsec:time_step_feasible}
We say that $(s,x,y,z)$ is a feasible solution of $\tsm{G}{T}$ if constraints~\eqref{eq:tsm_const1}--\eqref{eq:tsm_const10} are satisfied. 
Furthermore, we denote by $\feas{\tsm{G}{T}}$ the set of all feasible solutions of $\tsm{G}{T}$.
The following result shows that each feasible solution of $\tsm{G}{T}$ corresponds to a zero forcing game on the graph $G$, where the initial filling is a zero forcing set, no more than $T$ time steps are used, and on each time step all possible forces that can be done independently of each other are applied. 
To that end, we represent a zero forcing game by a collection of subsets $C^{(t)}$ and $C^{[t]}$, where $0\leq t\leq T$, and a collection of forces $\phi(C)$.
In contrast to the general rule given in Section~\ref{subsec:prelim_zero_forcing}, here $C^{(t)}$ denotes the set of all vertices that can be independently forced at time step $t$.
We denote by $\mathcal{Z}(G,T)$ the family of all zero forcing games on the graph $G$, where the initial filling is a zero forcing set, no more than $T$ time steps are used, and on each time step all possible forces that can be done independently of each other are applied.
\begin{theorem}\label{thm:tsm_feas1}
Let $G\in\mathbb{G}$.
If $(x,y,z)\in\feas{\tsm{G}{T}}$, then $C=\left\{v\in V\colon x^{0}_{v}=1\right\}$ is a zero forcing set corresponding to a zero forcing game in $\mathcal{Z}(G,T)$, where $\pt(G,C)=\sum_{t\in[T]}z^{t}$.
\end{theorem}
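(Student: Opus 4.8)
The plan is to mirror the structure of the proof of Theorem~\ref{thm:im_feas1}: starting from a feasible $(x,y,z)$, explicitly construct the sets $C^{(t)}$, $C^{[t]}$ and the force collection $\phi(C)$, then verify (a) that this is a legitimate zero forcing game, (b) that on each time step \emph{all} independently-performable forces are carried out, and (c) that the number of nonempty forcing steps is exactly $\sum_{t\in[T]}z^{t}$. First I would set $C=C^{(0)}=C^{[0]}=\{v\in V\colon x^{0}_{v}=1\}$, and more generally $C^{[t]}=\{v\in V\colon x^{t}_{v}=1\}$ and $C^{(t)}=C^{[t]}\setminus C^{[t-1]}$. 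Constraint~\eqref{eq:tsm_const4} shows $x^{t}_{v}$ is non-decreasing in $t$ (since the $y^{t}_{a}$ are non-negative) and, being $\{0,1\}$-valued, shows $v\in C^{(t)}$ precisely when some $y^{t}_{a}=1$ with $a=(u,v)$; constraint~\eqref{eq:tsm_const1} then forces every $v$ into exactly one of $C$ or a single later $C^{(t)}$, giving a well-defined $\phi(C)$ via $y^{t}_{a}=1 \Leftrightarrow (u\to v \text{ at step } t)$.

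Next I would check the color-change rule is respected at each step. If $y^{t}_{a}=1$ with $a=(u,v)$, constraints~\eqref{eq:tsm_const2}--\eqref{eq:tsm_const3} give $x^{t-1}_{u}=1$ and $x^{t-1}_{w}=1$ for all $w\in N(u)\setminus\{v\}$, i.e. $u$ and all its neighbors except $v$ lie in $C^{[t-1]}$, so $u\to v$ is a valid force at step $t$. Since every non-initial vertex is forced exactly once by exactly one neighbor, $C$ is a zero forcing set and $\phi(C)$ produces $\cl C$; and no more than $T$ steps are used by construction. The step requiring genuine care is the \emph{maximality} of each time step — that $C^{(t)}$ is the set of \emph{all} vertices that can be independently forced from $C^{[t-1]}$, which is exactly the content of constraint~\eqref{eq:tsm_const5}. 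Here I would argue: if $u\in C^{[t-1]}$, $v\notin C^{[t-1]}$, and $v$ is the unique neighbor of $u$ outside $C^{[t-1]}$, then the left-hand side of~\eqref{eq:tsm_const5} equals $1 - 0 + (d(u)-1) = d(u)$, forcing $\sum_{a=(w,v)\in A} y^{t}_{a}\geq 1$, i.e. $v$ is forced at step $t$ (by $u$ or by some other eligible neighbor). Conversely, if $v$ is not forceable at step $t$, the left side is at most $d(u)-1$ and the constraint is slack. Thus $C^{(t)}$ is exactly the independent-force set at time $t$, matching the convention of $\mathcal{Z}(G,T)$.

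Finally, for the propagation-time claim I would use constraints~\eqref{eq:tsm_const6}--\eqref{eq:tsm_const7}: together they force $z^{t}=1$ iff $\sum_{v\in V}(x^{t}_{v}-x^{t-1}_{v})>0$, i.e. iff $C^{(t)}\neq\emptyset$ (the $\tfrac1n$ scaling in~\eqref{eq:tsm_const6} makes the fractional value land in $(0,1]$ when the step is nonempty, and~\eqref{eq:tsm_const7} kills $z^{t}$ when the step is empty). Because the forcing process with maximal steps, once it stalls, stays stalled, the nonempty steps are exactly $t=1,\dots,\pt(G,C)$, so $\sum_{t\in[T]} z^{t} = \pt(G,C)$. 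The main obstacle I anticipate is handling constraint~\eqref{eq:tsm_const5} carefully — in particular checking that it neither over-constrains (it must still allow $v$ to be forced by a neighbor other than $u$, which is why the right side has the sum $\sum_{a=(w,v)\in A} y^{t}_{a}$ rather than just $y^{t}_{(u,v)}$) nor under-constrains, and confirming that the "independently of each other" qualifier in the definition of $\mathcal{Z}(G,T)$ is automatically satisfied since a vertex can have at most one white neighbor that it is about to fill, so simultaneous forces never conflict.
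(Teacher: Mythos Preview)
Your proposal is correct and follows essentially the same approach as the paper's proof: define the sets $C^{[t]}$ and $C^{(t)}$ from the $x$-variables, use constraints~\eqref{eq:tsm_const1}--\eqref{eq:tsm_const4} to verify that $\phi(C)$ gives a valid zero forcing game, invoke constraint~\eqref{eq:tsm_const5} for the maximality of each step, and read off $z^{t}=1\Leftrightarrow C^{(t)}\neq\emptyset$ from constraints~\eqref{eq:tsm_const6}--\eqref{eq:tsm_const7}. Your treatment is in fact slightly more explicit than the paper's in two places---the arithmetic check that the left-hand side of~\eqref{eq:tsm_const5} equals $d(u)$ when $u$ can force $v$, and the observation that maximal forcing steps are consecutive so that $\sum_{t}z^{t}$ genuinely equals $\pt(G,C)$---but the argument is otherwise the same.
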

\begin{proof}
Let $(x,y,z)\in\feas{\tsm{G}{T}}$. 
Then, define $C=C^{(0)}=C^{[0]}=\{v\in V\colon x_{v}^{0}=1\}$ and
\begin{align*}
    C^{(t)} = \left\{v\in V\colon x_{v}^{t}=1,~v\notin C^{[t-1]}\right\} \\
    C^{[t]} = C^{[t-1]}\cup C^{(t)},~1\leq t\leq T.
\end{align*}
Further, define $\phi(C)$ by the collection of forces $u\rightarrow v$ such that $y_{a}^{t}=1$, for some $t\in[T]$ and $a=(u,v)\in A$.
Note that every vertex of $G$ is in exactly one set $C^{(t)}$, where $0\leq t\leq T$.
Also, by constraint~\eqref{eq:tsm_const1}, every $v\in V$ is either in $C^{[0]}$ or corresponds to exactly one time step $t\in[T]$ and exactly one arc $a=(u,v)$ such that $y^{t}_{a}=1$. 
If $y^{t}_{a}=1$, then constraints~\eqref{eq:tsm_const2}--\eqref{eq:tsm_const3} imply that
\[
x_{u}^{t-1}=1
~\text{and}~
x_{w}^{t-1}=1,
\]
for all $w\in N(u)\setminus\{v\}$.
Moreover, constraint~\eqref{eq:tsm_const4} implies that
\[
x_{v}^{t-1}=0~\text{and}~x_{v}^{t}=1.
\]
Hence, $v\in C^{(t)}$ and $u$ and all its neighbors except for $v$ are in $C^{[t-1]}$. 

Now, let $t\in[T]$ and $(u,v)\in A$. 
Suppose that $u$ and all of its neighbors except $v$ are in $C^{[t-1]}$.
Then, by constraint~\eqref{eq:tsm_const5}, there exists an $a=(w,v)\in A$ such that $y_{a}^{t}=1$.
Finally, note that constraints~\eqref{eq:tsm_const6}--\eqref{eq:tsm_const7} imply that $z^{t}=1$ if and only if there exists a $v\in C^{(t)}$.

Therefore, every vertex of $G$ is either initially filled in $C$ or is forced later in $C^{(t)}$, for exactly one $t\in[T]$. 
Furthermore, if $v\in C^{(t)}$, then $v$ must be forced by exactly one if its neighbors $u$ such that $u$ and all of its neighbors except for $v$ are in $C^{[t-1]}$, and the forcing $u\rightarrow v$ is stored in $\phi(C)$.
Conversely, if $u$ and all of its neighbors except for $v$ are in $C^{[t-1]}$, then $v$ must be forced by one of its neighbors $w$, and the forcing $w\rightarrow v$ is stored in $\phi(C)$.
Finally, $z^{t}=1$ if and only if there exists a $v\in C^{(t)}$.
It follows that $C$ is a zero forcing set whose closure occurs in $\sum_{t\in[T]}z^{t}$ time steps, where on each time step all possible forces that can be done independently of each other are applied, and $\phi(C)$ is a collection of forces that produce $\cl{C}$. 
\end{proof}

Next, we show that every zero forcing game in $\mathcal{Z}(G,T)$ corresponds to a feasible solution of $\tsm{G}{T}$.
\begin{theorem}\label{thm:tsm_feas2}
Let $G\in\mathbb{G}$.
For each zero forcing set $C$ corresponding to a zero forcing game in $\mathcal{Z}(G,T)$, there is a feasible solution $(x,y,z)\in\feas{\tsm{G}{T}}$ such that $\abs{C}=\sum_{v\in V}x^{0}_{v}$ and $\sum_{t\in[T]}z^{t}=\pt(G,C)\leq T$.
\end{theorem}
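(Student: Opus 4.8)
The plan is to reverse the construction used in Theorem~\ref{thm:tsm_feas1}: given a zero forcing game in $\mathcal{Z}(G,T)$ — that is, a collection of subsets $C^{(t)}$, $C^{[t]}$ for $0\le t\le T$ in which, at each time step, \emph{every} vertex that can be independently forced is forced, together with a collection of forces $\phi(C)$ — I would read off the values of the variables $x$, $y$, $z$ directly. First I set $x^{t}_{v}=1$ if $v\in C^{[t]}$ and $x^{t}_{v}=0$ otherwise, for all $v\in V$ and $0\le t\le T$; I set $y^{t}_{a}=1$ for $a=(u,v)$ precisely when the force $u\rightarrow v$ lies in $\phi(C)$ and is performed at step $t$ (i.e.\ $v\in C^{(t)}$), and $y^{t}_{a}=0$ otherwise; and I set $z^{t}=1$ if $C^{(t)}\ne\emptyset$ and $z^{t}=0$ otherwise. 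Because $C$ is a zero forcing set whose closure is reached in $\pt(G,C)$ steps and $\pt(G,C)\le T$, the sets $C^{(t)}$ are empty for $t>\pt(G,C)$, so $\sum_{t\in[T]}z^{t}=\pt(G,C)$ and $\sum_{v\in V}x^{0}_{v}=|C^{(0)}|=|C|$, giving the two claimed identities once feasibility is established.

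The bulk of the proof is then checking constraints~\eqref{eq:tsm_const1}--\eqref{eq:tsm_const10}. Constraint~\eqref{eq:tsm_const1} holds because every vertex lies in exactly one $C^{(t)}$: if $v\in C^{(0)}$ then $x^{0}_{v}=1$ and all $y^{t}_{a}=0$ for arcs into $v$; if $v\in C^{(t)}$ with $t\ge 1$ then $v$ is forced by a unique neighbor at a unique step, so exactly one $y^{t}_{a}=1$. Constraints~\eqref{eq:tsm_const2}--\eqref{eq:tsm_const3} hold because a force $u\rightarrow v$ performed at step $t$ requires, by the color change rule, that $u$ and all neighbors of $u$ other than $v$ already be filled at step $t-1$, i.e.\ in $C^{[t-1]}$, so the relevant $x^{t-1}$ variables equal $1$. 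Constraint~\eqref{eq:tsm_const4} is just the bookkeeping identity $C^{[t]}=C^{[t-1]}\cup C^{(t)}$ together with the fact that each $v\in C^{(t)}$ is forced by exactly one arc; and constraints~\eqref{eq:tsm_const6}--\eqref{eq:tsm_const7} follow directly from the definition $z^{t}=[\,C^{(t)}\ne\emptyset\,]$, since $\sum_{v}(x^{t}_{v}-x^{t-1}_{v})=|C^{(t)}|$, a value between $0$ and $n$.

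The main obstacle — the one step deserving genuine care — is constraint~\eqref{eq:tsm_const5}, which encodes the maximality requirement that no independent force is skipped. Fix $(u,v)\in A$ and $t\in[T]$. If $x^{t-1}_{u}=1$ and $x^{t-1}_{w}=1$ for all $w\in N(u)\setminus\{v\}$, then $u$ together with all its neighbors except $v$ lie in $C^{[t-1]}$, so $v$ \emph{can} be forced at step $t$; since the game performs all such independent forces, $v$ must actually be forced at step $t$ by \emph{some} neighbor $w$, hence some $y^{t}_{(w,v)}=1$ and the left-hand side, which is at most $1 - x^{t-1}_{v} + (d(u)-1)$, is dominated by $\sum_{a=(w,v)\in A}y^{t}_{a} + d(u)-1 \ge 1 + d(u) - 1$. (One should note that if $v$ is already in $C^{[t-1]}$ then $x^{t-1}_{v}=1$ and the inequality is slack regardless, and that the forces performed at step $t$ being pairwise independent guarantees we are not forcing the same vertex twice.) Conversely, if the hypothesis fails — some needed neighbor of $u$ is not yet filled — then $x^{t-1}_{u}-x^{t-1}_{v}+\sum_{w\in N(u)\setminus\{v\}}x^{t-1}_{w}\le d(u)-1$ already, so the inequality holds trivially. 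Assembling the case analysis here, and being careful that the ``all independent forces'' convention in the definition of $\mathcal{Z}(G,T)$ is exactly what makes the tight case work, is where the real content of the argument lies; the remaining constraints~\eqref{eq:tsm_const8}--\eqref{eq:tsm_const10} are immediate from the $0$/$1$-valued definitions.
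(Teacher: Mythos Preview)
Your proposal is correct and follows essentially the same construction and verification as the paper's own proof: define $x^{t}_{v}$ from $C^{[t]}$, $y^{t}_{a}$ from the forces in $\phi(C)$, and $z^{t}$ from whether $C^{(t)}$ is nonempty, then check each constraint in turn. Your treatment of constraint~\eqref{eq:tsm_const5} is actually more explicit than the paper's (you carry out the case split on whether $v\in C^{[t-1]}$ and whether the full neighborhood condition on $u$ holds, whereas the paper states the implication more tersely), but the underlying idea is identical.
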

\begin{proof}
Consider the collection of subsets $C^{(t)}$ and $C^{[t]}$, where $0\leq t\leq T$, and the collection of forces $\phi(C)$ that represent the given zero forcing game.
For each $v\in V$, there is exactly one $t\in[T]\cup\{0\}$ such that $v\in C^{(t)}$. 
Define $x_{v}^{t+i}=1$, for all $0\leq i\leq T_t$, and set $x_{v}^{i}=0$, for all $0\leq i<t$. 
If $t=0$, then $v$ was initially filled and therefore never forced.
In this case, define $y_{a}^{i}=0$ for all $a=(u,v)\in A$ and for all $i\in[T]$.
Otherwise, $t\in[T]$ and there is a forcing $u\rightarrow v$ in $\phi(C)$ for exactly one neighbor $u\in N(v)$. 
In this case, define $y_{a}^{t}=1$, where $a=(u,v)$. 
Moreover, for all $i\in[T]\setminus\{t\}$, set $y_{a}^{i}=0$.
Also, for all $w\in N(v)\setminus\{u\}$ and for all $i\in[T]$, set $y_{a}^{i}=0$, where $a=(w,v)$.
Finally, let $t^{*}\in[T]$ such that $C^{[t^{*}]}=V$.
Then, define $z^{i}=1$, for all $1\leq i\leq t^{*}$, and set $z^{i}=0$, for all $t^{*}<i\leq T$. 

For every $v\in V$, $v\in C$ or there is exactly one neighbor $u\in N(v)$ and exactly one time step $t\in[T]$ such that $u\rightarrow v$  is in $\phi(C)$ and $v\in C^{(t)}$. 
Hence, constraint~\eqref{eq:tsm_const1} holds. 
If $u\rightarrow v$ at time step $t\in[T]$, then $u$ and all of its neighbors except for $v$ are in $C^{[t-1]}$; thus, constraints~\eqref{eq:tsm_const2}--\eqref{eq:tsm_const3} hold.
Once $v$ is filled it remains filled and is never again forced, so constraint~\eqref{eq:tsm_const4} holds. 
On each time step, all possible forces that can be done done independently of each other are applied. 
Therefore, for each $t\in[T]$ and each arc $(u,v)\in A$, if $u$ and all of its neighbors except for $v$ are in $C^{[t-1]}$, then $v$ must be in $C^{(t)}$; hence, constraint~\eqref{eq:tsm_const5} holds.
Finally, for each $t\in[T]$, $z^{t}=1$ if and only if there is a $v\in C^{(t)}$, so constraints~\eqref{eq:tsm_const6}--\eqref{eq:tsm_const7} hold. 
\end{proof}
\subsection{Optimal solutions}\label{subsec:time_step_optimal}
We say that $(x,y,z)\in\feas{\tsm{G}{T}}$ is an optimal solution if it minimizes the objective function in~\eqref{eq:tsm_obj}.
We denote by $\opt{\tsm{G}{T}}$ the set of all optimal solutions of $\tsm{G}{T}$.
The following result shows that the optimal solutions of $\tsm{G}{T}$ correspond to minimum zero forcing sets of $G$. 
\begin{corollary}\label{cor:tsm_opt}
Let $G\in\mathbb{G}$ and $T=(n-1)$.
If $(x,y,z)\in\opt{\tsm{G}{T}}$, then $C=\left\{v\in V\colon x^{0}_{v}=1\right\}$ is a minimum zero forcing set of $G$. 
\end{corollary}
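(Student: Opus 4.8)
The plan is to mimic the proof of Corollary~\ref{cor:im_opt}, since $\tsm{G}{T}$ plays the same role for the Time Step Model that $\im{G}{T}$ plays for the Infection Model, and the two feasibility correspondences (Theorems~\ref{thm:tsm_feas1} and~\ref{thm:tsm_feas2}) are the exact analogues of Theorems~\ref{thm:im_feas1} and~\ref{thm:im_feas2}. First I would take $(x,y,z)\in\opt{\tsm{G}{T}}$; by definition it lies in $\feas{\tsm{G}{T}}$ and minimizes $\sum_{v\in V}x^{0}_{v}$. Setting $C=\left\{v\in V\colon x^{0}_{v}=1\right\}$, Theorem~\ref{thm:tsm_feas1} gives that $C$ is a zero forcing set of $G$, so $\Z(G)\leq\abs{C}$.

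Next I would establish the reverse inequality by contradiction. Suppose $\Z(G)<\abs{C}$ and let $\hat{C}$ be a minimum zero forcing set, so $\abs{\hat{C}}=\Z(G)$. The one point that needs checking is that $\hat{C}$ corresponds to a zero forcing game in $\mathcal{Z}(G,T)$ with $T=(n-1)$: in the zero forcing game on $\hat{C}$ in which, on each time step, all forces that can be done independently are applied, at least one new vertex is filled at every step until the closure is reached, so the closure is reached in at most $n-\abs{\hat{C}}\leq n-1=T$ steps. Hence $\hat{C}$ does induce a game in $\mathcal{Z}(G,T)$, and Theorem~\ref{thm:tsm_feas2} yields a feasible solution $(\hat{x},\hat{y},\hat{z})\in\feas{\tsm{G}{T}}$ with $\sum_{v\in V}\hat{x}^{0}_{v}=\abs{\hat{C}}=\Z(G)<\abs{C}=\sum_{v\in V}x^{0}_{v}$, contradicting the optimality of $(x,y,z)$. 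Therefore $\abs{C}=\Z(G)$ and $C$ is a minimum zero forcing set.

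The argument is essentially routine given the two preceding feasibility theorems; the only step requiring a moment's care is confirming that the horizon $T=(n-1)$ is large enough to accommodate the propagation time of an arbitrary minimum zero forcing set, which I expect to be the mild ``obstacle'' here. This is immediate from the observation that, under the all-independent-forces rule, each time step fills at least one new vertex, so no more than $n-1$ time steps are ever needed. (One could even use the sharper bound $T=n-\Z(G)$, but $n-1$ suffices and matches the convention fixed in the model.)
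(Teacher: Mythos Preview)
Your proof is correct and follows essentially the same argument as the paper's: apply Theorem~\ref{thm:tsm_feas1} to get $\Z(G)\leq\abs{C}$, then argue by contradiction using Theorem~\ref{thm:tsm_feas2} applied to a minimum zero forcing set $\hat{C}$. Your explicit justification that $T=n-1$ suffices for $\hat{C}$ to lie in $\mathcal{Z}(G,T)$ is a small detail the paper leaves implicit, but otherwise the proofs are identical.
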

\begin{proof}
Let $(x,y,z)\in\opt{\tsm{G}{T}}$.
Then, $(x,y,z)\in\feas{\tsm{G}{T}}$ and is minimizes the objective function in~\eqref{eq:tsm_obj}.
Define $C=\left\{v\in V\colon x^{0}_{v}=1\right\}$.
Since $(x,y,z)\in\feas{\tsm{G}{T}}$, Theorem~\ref{thm:tsm_feas1} implies that $C$ is a zero forcing set corresponding to a zero forcing game in $\mathcal{Z}(G,T)$. 
Therefore, $\Z(G)\leq\sum_{v\in V}x^{0}_{v}$.
For the sake of contradiction, suppose that $\Z(G)<\abs{C}$.
Then, there is a zero forcing set $\hat{C}$ corresponding to a zero forcing game in $\mathcal{Z}(G,T)$ such that
\[
\abs{\hat{C}}=\Z(G) < \abs{C}. 
\]
However, Theorem~\ref{thm:tsm_feas2} implies that there exists a feasible solution $(\hat{x},\hat{y},\hat{z})\in\feas{\tsm{G}{T}}$ such that $\abs{\hat{C}}=\sum_{v\in V}\hat{x}^{0}_{v}$, which contradicts the optimality of $(x,y,z)\in\opt{\tsm{G}{T}}$. 
\end{proof}

In what follows, we make use of three other objective functions to compute the minimum propagation time, maximum propagation time, and throttling number of a graph. 
Specifically, let $\tsmpt{G}{T}$ denote the model in~\eqref{eq:tsm_obj}--\eqref{eq:tsm_const10}, where~\eqref{eq:tsm_obj} is replaced by the objective function
\begin{equation}\label{eq:tsmpt_obj}
\textrm{minimize}~\sum_{v\in V}x^{0}_{v} + \frac{1}{2T}\sum_{t\in[T]}z^{t}.
\end{equation}
Also, let $\tsmPT{G}{T}$ denote the model in~\eqref{eq:tsm_obj}--\eqref{eq:tsm_const10}, where~\eqref{eq:tsm_obj} is replaced by the objective function
\begin{equation}\label{eq:tsmPT_obj}
\textrm{minimize}~\sum_{v\in V}x^{0}_{v}-\frac{1}{2T}\sum_{t\in[T]}z^{t}.
\end{equation}
Finally, let $\tsmth{G}{T}$ denote the model in~\eqref{eq:tsm_obj}--\eqref{eq:tsm_const10}, where~\eqref{eq:tsm_obj} is replaced by the objective function
\begin{equation}\label{eq:tsmth_obj}
\textrm{minimize}~\sum_{v\in V}x^{0}_{v} + \sum_{t\in[T]}z^{t}.
\end{equation}
Note that $\tsm{G}{T}$, $\tsmpt{G}{T}$, $\tsmPT{G}{T}$, and $\tsmth{G}{T}$ all have the same feasible solutions but potentially different optimal solutions. 
The following result shows that the optimal solutions of $\tsmpt{G}{T}$ correspond to minimum zero forcing sets with minimum propagation time. 
\begin{corollary}\label{cor:tsmpt_opt}
Let $G\in\mathbb{G}$ and $T=(n-1)$.
If $(x,y,z)\in\opt{\tsmpt{G}{T}}$, then $C=\left\{v\in V\colon x^{0}_{v}=1\right\}$ is a minimum zero forcing set of $G$ such that $\pt(G)=\pt(G,C)=\sum_{t\in[T]}z^{t}$. 
\end{corollary}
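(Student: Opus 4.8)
The plan is to follow the template of Corollary~\ref{cor:impt_opt}, replacing the Infection Model feasibility correspondences with their Time Step Model analogues, Theorems~\ref{thm:tsm_feas1} and~\ref{thm:tsm_feas2}. The first thing I would record is that the tie-breaking term in~\eqref{eq:tsmpt_obj} satisfies $0\le\frac{1}{2T}\sum_{t\in[T]}z^{t}\le\frac{1}{2}$, since each $z^{t}\in\{0,1\}$ and there are $T$ of them. Consequently the objective of $\tsmpt{G}{T}$ is minimized lexicographically: an optimal solution first minimizes the integer quantity $\sum_{v\in V}x^{0}_{v}$ and then, among all solutions achieving that minimum, minimizes $\sum_{t\in[T]}z^{t}$, because a unit change in the integer part cannot be compensated by a penalty that is strictly less than $1$.

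Next I would take $(x,y,z)\in\opt{\tsmpt{G}{T}}$ and set $C=\{v\in V\colon x^{0}_{v}=1\}$. Since $(x,y,z)$ is feasible, Theorem~\ref{thm:tsm_feas1} gives that $C$ is a zero forcing set corresponding to a zero forcing game in $\mathcal{Z}(G,T)$ with $\pt(G,C)=\sum_{t\in[T]}z^{t}$, and in particular $\Z(G)\le\abs{C}=\sum_{v\in V}x^{0}_{v}$. To see $C$ is minimum, suppose $\Z(G)<\abs{C}$; then there is a minimum zero forcing set $\hat{C}$ whose associated all-independent-forces game lies in $\mathcal{Z}(G,T)$ (its closure is reached in at most $n-1=T$ steps), and Theorem~\ref{thm:tsm_feas2} produces a feasible $(\hat{x},\hat{y},\hat{z})$ with $\sum_{v\in V}\hat{x}^{0}_{v}=\Z(G)<\abs{C}=\sum_{v\in V}x^{0}_{v}$; by the lexicographic remark this has strictly smaller objective, contradicting optimality. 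Hence $C$ is a minimum zero forcing set and $\sum_{v\in V}x^{0}_{v}=\Z(G)$ — this part simply re-derives Corollary~\ref{cor:tsm_opt}.

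It then remains to show $\sum_{t\in[T]}z^{t}=\pt(G)$. One inequality is immediate: since $C$ is minimum and $\pt(G,C)=\sum_{t\in[T]}z^{t}$, we get $\pt(G)\le\pt(G,C)=\sum_{t\in[T]}z^{t}$. For the reverse I would argue by contradiction: if $\pt(G)<\sum_{t\in[T]}z^{t}$, choose a minimum zero forcing set $\hat{C}$ realizing $\pt(G,\hat{C})=\pt(G)$, note that its all-independent-forces game lies in $\mathcal{Z}(G,T)$ because $\pt(G)\le n-1=T$, and apply Theorem~\ref{thm:tsm_feas2} to obtain a feasible $(\hat{x},\hat{y},\hat{z})$ with $\sum_{v\in V}\hat{x}^{0}_{v}=\Z(G)=\sum_{v\in V}x^{0}_{v}$ and $\sum_{t\in[T]}\hat{z}^{t}=\pt(G)<\sum_{t\in[T]}z^{t}$. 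This solution ties on the integer part and strictly improves the tie-breaker $\frac{1}{2T}\sum_{t\in[T]}z^{t}$, contradicting optimality of $(x,y,z)$. Therefore $\pt(G)=\pt(G,C)=\sum_{t\in[T]}z^{t}$.

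The main obstacle here is essentially bookkeeping rather than a genuine difficulty: one must verify carefully that the scaling factor $\frac{1}{2T}$ keeps the penalty below $1$, so it cannot override a unit change in $\sum_{v\in V}x^{0}_{v}$, and one must confirm that every relevant zero forcing game — in particular one realizing $\pt(G)$ — fits inside $\mathcal{Z}(G,T)$ for $T=n-1$, which holds because the closure of any set is reached in at most $n-1$ time steps. Everything else is a direct transcription of the arguments already given for $\impt{G}{T}$ in Corollary~\ref{cor:impt_opt} and for $\tsm{G}{T}$ in Corollary~\ref{cor:tsm_opt}.
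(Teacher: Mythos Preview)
Your proposal is correct and follows essentially the same approach as the paper's proof: the lexicographic argument via $0\le\frac{1}{2T}\sum_{t\in[T]}z^{t}\le\frac12$, the appeal to Theorem~\ref{thm:tsm_feas1} for feasibility, and the contradiction via Theorem~\ref{thm:tsm_feas2} are exactly what the paper does. The only cosmetic difference is that the paper cites Corollary~\ref{cor:tsm_opt} directly for the minimality of $C$, whereas you re-derive it inline before noting the equivalence.
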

\begin{proof}
Let $(x,y,z)\in\opt{\tsmpt{G}{T}}$.
Then, $(x,y,z)\in\feas{\tsm{G}{T}}$ and is minimizes the objective function in~\eqref{eq:tsmpt_obj}.
Define $C=\left\{v\in V\colon x^{0}_{v}=1\right\}$. 
Since $0\leq\frac{1}{2T}\sum_{t\in[T]}z^{t}\leq\frac{1}{2}$, it follows that the objective function first minimizes $\sum_{v\in V}x^{0}_{v}$ and then minimizes $\sum_{t\in[T]}z^{t}$. 
Hence, by Corollary~\ref{cor:tsm_opt}, $C$ is a minimum zero forcing set of $G$. 
Furthermore, since $(x,y,z)$ is a feasible solution, Theorem~\ref{thm:tsm_feas1} implies that
\[
\pt(G)\leq\pt(G,C)=\sum_{t\in[T]}z^{t}.
\]
For the sake of contradiction, suppose that $\pt(G)<\sum_{t\in[T]}z^{t}$.
Th, there is a zero forcing set $\hat{C}$ corresponding to a zero forcing game in $\mathcal{Z}(G,T)$ such that
\[
\pt(G,\hat{C}) = \pt(G) < \sum_{t\in[T]}z^{t}
~\text{and}~
\abs{\hat{C}}=\Z(G).
\]
However, Theorem~\ref{thm:tsm_feas2} implies that there is a feasible solution $(\hat{x},\hat{y},\hat{z})\in\feas{\tsm{G}{T}}$ such that $\abs{\hat{C}}=\sum_{v\in V}\hat{x}^{0}_{v}$ and $\pt(G,\hat{C})=\sum_{t\in[T]}\hat{z}^{t}$, which contradicts the optimality of $(x,y,z)\in\opt{\tsmpt{G}{T}}$. 
\end{proof}
The following result shows that the optimal solutions of $\tsmPT{G}{T}$ correspond to minimum zero forcing sets with maximum propagation time. 
\begin{corollary}\label{cor:tsmPT_opt}
Let $G\in\mathbb{G}$ and $T=(n-1)$.
If $(x,y,z)\in\opt{\tsmPT{G}{T}}$, then $C=\left\{v\in V\colon x^{0}_{v}=1\right\}$ is a minimum zero forcing set of $G$ such that $\PT(G)=\pt(G,C)=\sum_{t\in[T]}z^{t}$. 
\end{corollary}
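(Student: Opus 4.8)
The plan is to follow the template of the proof of Corollary~\ref{cor:tsmpt_opt}, replacing the minimization of the propagation-time term by its maximization. First I would take $(x,y,z)\in\opt{\tsmPT{G}{T}}$ and set $C=\left\{v\in V\colon x^{0}_{v}=1\right\}$. Since each $z^{t}\in\{0,1\}$, we have $0\leq\frac{1}{2T}\sum_{t\in[T]}z^{t}\leq\frac{1}{2}$, so the objective in~\eqref{eq:tsmPT_obj} is driven lexicographically: it first forces $\sum_{v\in V}x^{0}_{v}$ to be as small as possible, and then, among feasible solutions of that minimum cardinality, it makes $\sum_{t\in[T]}z^{t}$ as large as possible, because that term is subtracted. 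Quantitatively, any feasible solution with $\sum_{v}x^{0}_{v}\geq\Z(G)+1$ has objective value at least $\Z(G)+\frac{1}{2}$, while Corollary~\ref{cor:tsm_opt} together with Theorem~\ref{thm:tsm_feas2} guarantees a feasible solution of objective value at most $\Z(G)$; hence an optimal solution must have $\sum_{v}x^{0}_{v}=\Z(G)$, and Corollary~\ref{cor:tsm_opt} applies to give that $C$ is a minimum zero forcing set of $G$.

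Next I would invoke Theorem~\ref{thm:tsm_feas1} to conclude $\pt(G,C)=\sum_{t\in[T]}z^{t}$, so in particular $\pt(G,C)\leq\PT(G)$. For the reverse inequality I argue by contradiction: if $\PT(G)>\sum_{t\in[T]}z^{t}$, then by definition of $\PT(G)$ there is a zero forcing set $\hat{C}$ with $\abs{\hat{C}}=\Z(G)$ and $\pt(G,\hat{C})=\PT(G)$, corresponding to a zero forcing game in $\mathcal{Z}(G,T)$; by Theorem~\ref{thm:tsm_feas2} there is a feasible solution $(\hat{x},\hat{y},\hat{z})\in\feas{\tsm{G}{T}}$ with $\sum_{v\in V}\hat{x}^{0}_{v}=\Z(G)$ and $\sum_{t\in[T]}\hat{z}^{t}=\PT(G)$. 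Its objective value is $\Z(G)-\frac{1}{2T}\PT(G)<\Z(G)-\frac{1}{2T}\sum_{t\in[T]}z^{t}=\sum_{v\in V}x^{0}_{v}-\frac{1}{2T}\sum_{t\in[T]}z^{t}$, contradicting the optimality of $(x,y,z)$. This yields $\PT(G)=\pt(G,C)=\sum_{t\in[T]}z^{t}$, as claimed.

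The arithmetic of the lexicographic step is routine; the one point that genuinely needs care is the appeal to $\mathcal{Z}(G,T)$ in the contradiction step, namely that with $T=n-1$ there actually \emph{is} a zero forcing game realizing $\PT(G)$ in which, at every time step, all independent forces are performed. This is precisely the notion of propagation time fixed in Section~\ref{subsec:prelim_zero_forcing}, and it is exactly why this corollary works for $\tsmPT{G}{T}$ while the analogous statement fails for the Infection Model (cf.\ Example~\ref{ex:wrong_PT}); consequently Theorem~\ref{thm:tsm_feas2} applies verbatim and no separate combinatorial lemma is needed. The only obstacle worth flagging is making sure the strict inequality in the contradiction is not spoiled by the fractional term, which is handled by the bound $\frac{1}{2T}\sum_{t}z^{t}\le\frac12$ and the integrality of $\PT(G)$ and $\Z(G)$.
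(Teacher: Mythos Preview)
Your proposal is correct and follows essentially the same approach as the paper's own proof: both use the bound $0\leq\frac{1}{2T}\sum_{t\in[T]}z^{t}\leq\frac{1}{2}$ to argue the objective is lexicographic, invoke Corollary~\ref{cor:tsm_opt} to get that $C$ is a minimum zero forcing set, apply Theorem~\ref{thm:tsm_feas1} for $\pt(G,C)=\sum_{t}z^{t}$, and then derive a contradiction via Theorem~\ref{thm:tsm_feas2}. Your version is slightly more explicit in justifying the lexicographic step quantitatively, but the structure and key ingredients are identical.
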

\begin{proof}
Let $(x,y,z)\in\opt{\tsmPT{G}{T}}$.
Then, $(x,y,z)\in\feas{\tsm{G}{T}}$ and is minimizes the objective function in~\eqref{eq:tsmPT_obj}.
Define $C=\left\{v\in V\colon x^{0}_{v}=1\right\}$. 
Since $0\leq\frac{1}{2T}\sum_{t\in[T]}z^{t}\leq\frac{1}{2}$, it follows that the objective function first minimizes $\sum_{v\in V}x^{0}_{v}$ and then maximizes $\sum_{t\in[T]}z^{t}$. 
Hence, by Corollary~\ref{cor:tsm_opt}, $C$ is a minimum zero forcing set of $G$. 
Furthermore, since $(x,y,z)$ is a feasible solution, Theorem~\ref{thm:tsm_feas1} implies that
\[
\PT(G)\geq\pt(G,C)=\sum_{t\in[T]}z^{t}.
\]
For the sake of contradiction, suppose that $\PT(G)>\sum_{t\in[T]}z^{t}$.
Then, there exists a zero forcing set $\hat{C}$ corresponding to a zero forcing game in $\mathcal{Z}(G,T)$ such that
\[
\pt(G,\hat{C}) = \PT(G) > \sum_{t\in[T]}\hat{z}^{t}
~\text{and}~
\abs{\hat{C}}=\Z(G).
\]
However, Theorem~\ref{thm:tsm_feas2} implies that there is a feasible solution $(\hat{x},\hat{y},\hat{z})\in\feas{\tsm{G}{T}}$ such that $\abs{\hat{C}}=\sum_{v\in V}\hat{x}^{0}_{v}$ and $\pt(G,\hat{C})=\sum_{t\in[T]}\hat{z}^{t}$, which contradicts the optimality of $(x,y,z)\in\opt{\tsmPT{G}{T}}$. 
\end{proof}
The following result shows that the optimal solutions of $\tsmth{G}{T}$ correspond to zero forcing sets with minimum throttling number. 
\begin{corollary}\label{cor:tsmth_opt}
Let $G\in\mathbb{G}$ and $T=(n-1)$.
If $(x,y,z)\in\opt{\tsmth{G}{T}}$, then $C=\left\{v\in V\colon x^{0}_{v}=1\right\}$ is a zero forcing set of $G$ such that $\th(G)=\th(G,C)=\sum_{v\in V}x^{0}_{v}+\sum_{t\in[T]}z^{t}$.
\end{corollary}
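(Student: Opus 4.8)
The plan is to mimic the argument pattern of Corollaries~\ref{cor:im_opt}--\ref{cor:tsmPT_opt}, but now tracking the single quantity $\sum_{v\in V}x^{0}_{v}+\sum_{t\in[T]}z^{t}$, which the objective~\eqref{eq:tsmth_obj} minimizes directly, so no lexicographic tie-breaking scaling is needed. Start from $(x,y,z)\in\opt{\tsmth{G}{T}}$ and set $C=\{v\in V\colon x^{0}_{v}=1\}$. Since $(x,y,z)$ is in particular feasible for $\tsm{G}{T}$, Theorem~\ref{thm:tsm_feas1} gives that $C$ is a zero forcing set realizing a game in $\mathcal{Z}(G,T)$ with $\pt(G,C)=\sum_{t\in[T]}z^{t}$. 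Hence $\th(G,C)=\abs{C}+\pt(G,C)=\sum_{v\in V}x^{0}_{v}+\sum_{t\in[T]}z^{t}$, and by definition of the throttling number $\th(G)\le\th(G,C)$.

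For the reverse inequality I would argue by contradiction: suppose $\th(G)<\sum_{v\in V}x^{0}_{v}+\sum_{t\in[T]}z^{t}$, and pick $\hat{C}\subseteq V$ attaining $\th(G,\hat{C})=\th(G)$. First observe that $\hat{C}$ must be a zero forcing set, since otherwise $\pt(G,\hat{C})=\infty$ and $\th(G,\hat{C})=\infty>n=\th(G,V)$, contradicting minimality. Because $\hat{C}$ is a nonempty zero forcing set, at least one new vertex is filled on each time step, so its closure is reached in $\pt(G,\hat{C})\le n-\abs{\hat{C}}\le n-1=T$ simultaneous steps; thus $\hat{C}$ corresponds to a game in $\mathcal{Z}(G,T)$. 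Theorem~\ref{thm:tsm_feas2} then yields $(\hat{x},\hat{y},\hat{z})\in\feas{\tsm{G}{T}}$ with $\abs{\hat{C}}=\sum_{v\in V}\hat{x}^{0}_{v}$ and $\pt(G,\hat{C})=\sum_{t\in[T]}\hat{z}^{t}$, so its objective value is $\sum_{v\in V}\hat{x}^{0}_{v}+\sum_{t\in[T]}\hat{z}^{t}=\th(G,\hat{C})=\th(G)<\sum_{v\in V}x^{0}_{v}+\sum_{t\in[T]}z^{t}$, contradicting optimality of $(x,y,z)$ for $\tsmth{G}{T}$. Therefore $\th(G)=\th(G,C)=\sum_{v\in V}x^{0}_{v}+\sum_{t\in[T]}z^{t}$.

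The only point requiring care — and the main (minor) obstacle — is the mismatch between the domain of the minimum defining $\th(G)$ (all vertex subsets) and the domain addressed by Theorems~\ref{thm:tsm_feas1}--\ref{thm:tsm_feas2} (zero forcing sets realizing games in $\mathcal{Z}(G,T)$); this is dispatched exactly as above, by showing the minimizer $\hat{C}$ is forced to be a nonempty zero forcing set and that its simultaneous propagation time is at most $T=n-1$, so the correspondence of Theorem~\ref{thm:tsm_feas2} applies. Note that, unlike in Corollary~\ref{cor:tsmpt_opt} and Corollary~\ref{cor:tsmPT_opt}, here $C$ need not be a minimum zero forcing set, so Corollary~\ref{cor:tsm_opt} is not invoked.
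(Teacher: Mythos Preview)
Your proof is correct and follows essentially the same route as the paper's: feasibility via Theorem~\ref{thm:tsm_feas1} gives $\th(G)\le\th(G,C)=\sum_{v}x^{0}_{v}+\sum_{t}z^{t}$, and a strict-inequality contradiction via Theorem~\ref{thm:tsm_feas2} yields equality. Your extra justification that any minimizer $\hat{C}$ for $\th(G)$ must be a zero forcing set with $\pt(G,\hat{C})\le n-1=T$ (so that Theorem~\ref{thm:tsm_feas2} applies) is a detail the paper glosses over but is entirely appropriate to include.
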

\begin{proof}
Let $(x,y,z)\in\opt{\tsmth{G}{T}}$.
Then, $(x,y,z)\in\feas{\tsm{G}{T}}$ and is minimizes the objective function in~\eqref{eq:tsmth_obj}.
Define $C=\left\{v\in V\colon x^{0}_{v}=1\right\}$. 
Since $(x,y,z)$ is a feasible solution, Theorem~\ref{thm:tsm_feas1} implies that $C$ is a zero forcing set corresponding to a zero forcing game in $\mathcal{Z}(G,T)$, where $\pt(G,C)=\sum_{t\in[T]}z^{t}$. 
Therefore,
\[
\th(G)\leq\th(G,C) = \sum_{v\in V}x^{0}_{v} + \sum_{t\in[T]}z^{t}.
\]
For the sake of contradiction, suppose that $\th(G) < \sum_{v\in V}x^{0}_{v}+\sum_{t\in[T]}z^{t}$.
Then, there exists a zero forcing set $\hat{C}$ corresponding to a zero forcing game in $\mathcal{Z}(G,T)$ such that
\[
\th(G,\hat{C}) = \th(G) < \sum_{v\in V}x^{0}_{v} + \sum_{t\in[T]}z^{t}.
\]
However, Theorem~\ref{thm:tsm_feas2} implies that there exists a feasible solution $(\hat{x},\hat{y},\hat{z})\in\feas{\tsm{G}{T}}$ such that $\hat{C}=\sum_{v\in V}x^{0}_{v}$ and $\pt(G,\hat{C})=\sum_{t\in[T]}\hat{z}^{t}$, which contradicts the optimality of $(x,y,z)\in\opt{\tsmth{G}{T}}$. 
\end{proof}
\subsection{Propagation time interval}\label{subsec:pt_interval}
Let $G\in\mathbb{G}$.
In this section, we introduce a model for computing the realized propagation time interval. 

Given $k\in\pti(G)$, we let $\tsmpti{G}{T}{k}$ denote the model in~\eqref{eq:tsm_obj}--\eqref{eq:tsm_const10}, where~\eqref{eq:tsm_obj} is replaced by the objective function~\eqref{eq:tsmpt_obj} and the following constraint is added:
\begin{equation}\label{eq:tsmpti_const}
\sum_{t\in[T]}z^{t} \geq k. 
\end{equation}

Note that every feasible solution of $\tsmpti{G}{T}{k}$ is a feasible solution of $\tsm{G}{T}$ that also satisfies constraint~\eqref{eq:tsmpti_const}. 
We denote by $\feas{\tsmpti{G}{T}{k}}$ the set of all feasible solutions of $\tsmpti{G}{T}{k}$.
We say that $(x,y,z)\in\feas{\tsmpti{G}{T}{k}}$ is an optimal solution if it minimizes the objective function in~\eqref{eq:tsmpt_obj}.
We denote by $\opt{\tsmpti{G}{T}{k}}$ the set of all optimal solutions of $\tsmpti{G}{T}{k}$.
\begin{corollary}\label{cor:tsmpti_opt}
Let $G\in\mathbb{G}$, $T=(n-1)$, and $k\leq\PT(G)$.
If $(x,y,z)\in\opt{\tsmpti{G}{T}{k}}$, then $C = \left\{v\in V\colon x^{0}_{v}=1\right\}$ is a minimum zero forcing set of $G$ such that $\pt(G,C) = \sum_{t\in[T]}z^{t}\geq k$.
Furthermore, there is no minimum zero forcing set $\hat{C}$ such that $\pt(G,\hat{C})\in\left(k,\pt(G,C)\right)$. 
\end{corollary}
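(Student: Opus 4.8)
The plan is to mirror the proof of Corollary~\ref{cor:tsmpt_opt}, using Theorems~\ref{thm:tsm_feas1} and~\ref{thm:tsm_feas2} as the bridge between feasible solutions of the Time Step Model and zero forcing games in $\mathcal{Z}(G,T)$, while accounting for the extra constraint~\eqref{eq:tsmpti_const} and the claim that no realized propagation time lies strictly between $k$ and $\pt(G,C)$. First I would record that $\opt{\tsmpti{G}{T}{k}}$ is nonempty. Since $k\leq\PT(G)$ and $\PT(G)\leq n-1=T$, there is a minimum zero forcing set $\tilde{C}$ with $\pt(G,\tilde{C})=\PT(G)$, and Theorem~\ref{thm:tsm_feas2} turns it into a feasible solution $(\tilde{x},\tilde{y},\tilde{z})\in\feas{\tsm{G}{T}}$ with $\sum_{v\in V}\tilde{x}^{0}_{v}=\Z(G)$ and $\sum_{t\in[T]}\tilde{z}^{t}=\PT(G)\geq k$; thus constraint~\eqref{eq:tsmpti_const} holds and $(\tilde{x},\tilde{y},\tilde{z})$ is feasible for $\tsmpti{G}{T}{k}$.

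Next I would show that $C=\left\{v\in V\colon x^{0}_{v}=1\right\}$ is a minimum zero forcing set for any $(x,y,z)\in\opt{\tsmpti{G}{T}{k}}$. Because $0\leq\frac{1}{2T}\sum_{t\in[T]}z^{t}\leq\frac{1}{2}$, the objective~\eqref{eq:tsmpt_obj} is lexicographic: it first minimizes $\sum_{v\in V}x^{0}_{v}$ and only then $\sum_{t\in[T]}z^{t}$. By Theorem~\ref{thm:tsm_feas1}, $C$ is a zero forcing set corresponding to a game in $\mathcal{Z}(G,T)$ with $\pt(G,C)=\sum_{t\in[T]}z^{t}$, so $\abs{C}\geq\Z(G)$. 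If $\abs{C}>\Z(G)$, then the feasible solution $(\tilde{x},\tilde{y},\tilde{z})$ from the previous paragraph has objective value $\Z(G)+\frac{1}{2T}\PT(G)\leq(\abs{C}-1)+\frac{1}{2}<\abs{C}\leq\abs{C}+\frac{1}{2T}\sum_{t\in[T]}z^{t}$, contradicting optimality; hence $\abs{C}=\Z(G)$. Together with constraint~\eqref{eq:tsmpti_const} and Theorem~\ref{thm:tsm_feas1}, this gives $\pt(G,C)=\sum_{t\in[T]}z^{t}\geq k$.

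Finally I would establish the interval claim. Suppose toward a contradiction that there is a minimum zero forcing set $\hat{C}$ with $k<\pt(G,\hat{C})<\pt(G,C)$. By Theorem~\ref{thm:tsm_feas2} there is $(\hat{x},\hat{y},\hat{z})\in\feas{\tsm{G}{T}}$ with $\sum_{v\in V}\hat{x}^{0}_{v}=\abs{\hat{C}}=\Z(G)$ and $\sum_{t\in[T]}\hat{z}^{t}=\pt(G,\hat{C})$. Since $\pt(G,\hat{C})>k$, constraint~\eqref{eq:tsmpti_const} is satisfied, so $(\hat{x},\hat{y},\hat{z})$ is feasible for $\tsmpti{G}{T}{k}$; and since $\pt(G,\hat{C})<\pt(G,C)=\sum_{t\in[T]}z^{t}$, its objective value $\Z(G)+\frac{1}{2T}\pt(G,\hat{C})$ is strictly smaller than $\Z(G)+\frac{1}{2T}\pt(G,C)$, the objective value of $(x,y,z)$, contradicting optimality. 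Hence no such $\hat{C}$ exists.

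The argument is essentially bookkeeping layered on top of Theorems~\ref{thm:tsm_feas1} and~\ref{thm:tsm_feas2}, so I do not anticipate a serious obstacle. The one point that requires care, and the only real difference from Corollary~\ref{cor:tsmpt_opt}, is that the feasible region of $\tsmpti{G}{T}{k}$ depends on $k$: one must check nonemptiness (this is exactly where $k\leq\PT(G)$ enters), and one must track that~\eqref{eq:tsmpti_const} is a one-sided bound $\sum_{t\in[T]}z^{t}\geq k$, which is why the strict hypothesis $k<\pt(G,\hat{C})$ is enough to keep $\hat{C}$'s solution feasible while the minimization forces strictness only against $\pt(G,C)$, producing the open interval $\left(k,\pt(G,C)\right)$ in the conclusion.
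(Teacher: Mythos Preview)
Your proof is correct and follows essentially the same route as the paper's: both use the lexicographic nature of the objective~\eqref{eq:tsmpt_obj} together with Theorems~\ref{thm:tsm_feas1} and~\ref{thm:tsm_feas2} to establish that $C$ is a minimum zero forcing set with $\pt(G,C)=\sum_{t\in[T]}z^{t}\geq k$, and then derive the interval claim by the identical contradiction argument. You are in fact slightly more careful than the paper, which invokes Corollary~\ref{cor:tsm_opt} directly even though the feasible region of $\tsmpti{G}{T}{k}$ is restricted by~\eqref{eq:tsmpti_const}; your explicit construction of the witness $(\tilde{x},\tilde{y},\tilde{z})$ from a set realizing $\PT(G)$ fills exactly the gap that makes that invocation legitimate.
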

\begin{proof}
Let $(x,y,z)\in\opt{\tsmpti{G}{T}{k}}$.
Then, $(x,y,z)\in\feas{\tsm{G}{T}}$ and is minimizes the objective function in~\eqref{eq:tsmpt_obj}.
Since $0\leq\frac{1}{2T}\sum_{t\in[T]}z^{t}\leq\frac{1}{2}$, it follows that the objective function first minimizes $\sum_{v\in V}x^{0}_{v}$ and then minimizes $\sum_{t\in[T]}z^{t}$. 
Therefore, by Corollary~\ref{cor:tsm_opt}, $C$ is a minimum zero forcing set of $G$. 
Furthermore, since $(x,y,z)$ is a feasible solution, Theorem~\ref{thm:tsm_feas1} implies that $\pt(G,C)=\sum_{t\in[T]}z^{t}$. 
In addition, since $z$ must satisfy constraint~\eqref{eq:tsmpti_const}, it follows that $\pt(G,C)\geq k$. 
For the sake of contradiction, suppose that there is a minimum zero forcing set $\hat{C}$ such that $\pt(G,\hat{C})\in\left(k,\pt(G,C)\right)$.
Then, Theorem~\ref{thm:tsm_feas2} implies that there is a feasible solution $(\hat{x},\hat{y},\hat{z})\in\feas{\tsm{G}{T}}$ such that $\abs{\hat{C}}=\sum_{v\in V}\hat{x}^{0}_{v}$ and $\pt(G,\hat{C})=\sum_{t\in[T]}\hat{z}^{t}$. 
Since $\pt(G,\hat{C})>k$, it follows that constraint~\eqref{eq:tsmpti_const} holds; hence, $(\hat{x},\hat{y},\hat{z})\in\feas{\tsmpti{G}{T}{k}}$, which contradicts the optimality of $(x,y,z)\in\opt{\tsmpti{G}{T}{k}}$. 
\end{proof}

Corollary~\ref{cor:tsmpti_opt} shows that each optimal solution of $\tsmpti{G}{T}{k}$ corresponds to a minimum zero forcing set $C$ such that $\pt(G,C)$ is the nearest realizable propagation time to $k$, not smaller than $k$, over all minimum zero forcing sets of $G$.
Furthermore, if $k>\PT(G)$ then $\tsmpti{G}{T}{k}$ is infeasible.
Therefore, we are able to compute the realized propagation time interval for any graph $G$, as shown in Algorithm~\ref{alg:pti}.
\begin{algorithm}[ht!]
\caption{Realized propagation time interval for graph $G\in\mathbb{G}$ }
\label{alg:pti}
\begin{algorithmic}
\Function{$\pti$}{$G$}
    \State{$T\gets (n-1)$}
    \State{$k\gets 0$}
    \State{$\mathcal{I}\gets\emptyset$}
    \While{$\tsmpti{G}{T}{k}$ is feasible}
        \State{Let $(x,y,z)\in\opt{\tsmpti{G}{T}{k}}$}
        \State{$\mathcal{I}\gets\mathcal{I}\cup\left\{\sum_{t\in[T]}z^{t}\right\}$}
        \State{$k\gets \sum_{t\in[T]}z^{t}+1$}
    \EndWhile
    \State{\Return{$\mathcal{I}$}}
\EndFunction
\end{algorithmic}
\end{algorithm}

\section{Fort Cover Model}\label{sec:fort_cover}
The Fort Cover Model was first proposed in~\cite{Brimkov2019zf} for computing the zero forcing number of a graph using the fort cover definition of zero forcing sets, see Theorem~\ref{thm:fort_cover}.
In this section, we review the Fort Cover Model and show how its linear programming relaxation can be used to compute the fractional zero forcing number, which was first introduced in~\cite{Cameron2023}. 

The Fort Cover Model for a graph $G$ is denoted by $\fc{G}$ and stated in~\eqref{eq:fc_obj}--\eqref{eq:fc_const2}.
Note that the binary variable $s_{v}$ indicates whether the vertex $v$ is in the zero forcing set.
\begin{mini!}
	{}{\sum_{v\in V}s_{v}}{}{}\label{eq:fc_obj}
	\addConstraint{\sum_{v\in F}s_{v}}{\geq 1,~\quad\forall F\in \mathcal{F}_{G}}\label{eq:fc_const1}
    \addConstraint{s}{\in\{0,1\}^{n}}\label{eq:fc_const2}
 \end{mini!}

We say that $s$ is a feasible solution of $\fc{G}$ if constraints~\eqref{eq:fc_const1}--\eqref{eq:fc_const2} are satisfied.
Furthermore, we denote by $\feas{\fc{G}}$ the set of all feasible solutions of $\fc{G}$.
In addition, we say that $s\in\feas{\fc{G}}$ is an optimal solution if it minimizes the objective function in~\eqref{eq:fc_obj}.
We denote by $\opt{\fc{G}}$ the set of all optimal solutions of $\fc{G}$. 
By Theorem~\ref{thm:fort_cover} and constraint~\eqref{eq:fc_const1}, the feasible solutions of $\fc{G}$ correspond to the zero forcing sets of $G$. 
Therefore, $s\in\opt{\fc{G}}$ if and only if $s$ corresponds to a minimum zero forcing set of $G$.
For reference, we state this observation in Corollary~\ref{cor:fort_cover}.
\begin{corollary}\label{cor:fort_cover}
Let $G\in\mathbb{G}$.
If $s\in\opt{\fc{G}}$, then $C=\left\{v\in V\colon s_{v}=1\right\}$ is a minimum zero forcing set of $G$.
\end{corollary}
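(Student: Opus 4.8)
The plan is to derive this directly from Theorem~\ref{thm:fort_cover}, which identifies the zero forcing sets of $G$ with exactly the subsets of $V$ that meet every fort. First I would observe that a binary vector $s\in\{0,1\}^{n}$ satisfies constraint~\eqref{eq:fc_const1} if and only if the set $C=\{v\in V\colon s_{v}=1\}$ intersects every $F\in\mathcal{F}_{G}$, since $\sum_{v\in F}s_{v}\geq 1$ is precisely the statement that $C\cap F\neq\emptyset$. Combined with Theorem~\ref{thm:fort_cover}, this shows that the map $s\mapsto C$ is a bijection between $\feas{\fc{G}}$ and the collection of zero forcing sets of $G$, under which the objective value $\sum_{v\in V}s_{v}$ equals $\abs{C}$.

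Next I would use optimality. Let $s\in\opt{\fc{G}}$ and set $C=\{v\in V\colon s_{v}=1\}$. By the correspondence above, $C$ is a zero forcing set, so $\Z(G)\leq\abs{C}$. For the reverse inequality, take any minimum zero forcing set $\hat{C}$ of $G$, so $\abs{\hat{C}}=\Z(G)$. By Theorem~\ref{thm:fort_cover}, $\hat{C}$ intersects every fort, hence its indicator vector $\hat{s}$ (with $\hat{s}_{v}=1$ iff $v\in\hat{C}$) lies in $\feas{\fc{G}}$ and has objective value $\sum_{v\in V}\hat{s}_{v}=\abs{\hat{C}}=\Z(G)$. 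Since $s$ minimizes the objective over $\feas{\fc{G}}$, we get $\abs{C}=\sum_{v\in V}s_{v}\leq\sum_{v\in V}\hat{s}_{v}=\Z(G)$. Therefore $\abs{C}=\Z(G)$, i.e.\ $C$ is a minimum zero forcing set.

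There is really no substantive obstacle here; the result is immediate once Theorem~\ref{thm:fort_cover} is in hand, and the only thing to be careful about is invoking \emph{both} directions of that theorem — the ``only if'' direction to conclude that the optimal $s$ yields a genuine zero forcing set, and the ``if'' direction to guarantee that an arbitrary minimum zero forcing set produces a competing feasible point. I would also note in passing that the argument simultaneously shows $\opt{\fc{G}}$ consists exactly of the indicator vectors of minimum zero forcing sets, which is the remark stated before the corollary; this bidirectional phrasing costs nothing extra and makes the link to the fractional relaxation (obtained by replacing~\eqref{eq:fc_const2} with $s\in[0,1]^{n}$, whose optimum is $\Z^{*}(G)$) transparent.
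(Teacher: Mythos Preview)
Your argument is correct and matches the paper's approach exactly: the paper simply observes (in the paragraph preceding the corollary, without a formal proof environment) that by Theorem~\ref{thm:fort_cover} and constraint~\eqref{eq:fc_const1} the feasible solutions of $\fc{G}$ correspond to the zero forcing sets of $G$, so optimal solutions correspond to minimum zero forcing sets. Your write-up is more explicit about invoking both directions of Theorem~\ref{thm:fort_cover}, but the underlying logic is identical.
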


\subsection{Constraint generation}\label{subsec:fort_cover_cg}
Note that constraint~\eqref{eq:fc_const1} need only consider minimal forts of $G$ since every fort contains a minimal fort. 
However, even this restriction is not enough to ensure a polynomial, in the order of $G$, number of constraints. 
Indeed, there exists families of graphs with an exponential number of minimal forts, such as the path, cycle, wheel, and windmill graphs~\cite{Brimkov2021,Becker2025}.
For this reason, $\fc{G}$ must use constraint generation, see~\cite{Dantzig1954,Nemhauser1999}.
In particular, a relaxed model is obtained from the full model $\fc{G}$ by omitting the constraints in~\eqref{eq:fc_const1}; then, the relaxed model is solved and a set of violated constraints from the full model are added to the relaxed model, and this process is repeated until there are no more violated constraints. 

In~\cite{Brimkov2019zf}, the authors explore methods for generating violated constraints from the full model $\fc{G}$. 
Here, we review one such method. 
Let $s\in\{0,1\}^{n}$ be a solution to the relaxed model, and define $C\subseteq V$ where $v\in C$ if $s_{v}=1$. 
If $C$ is a zero forcing set, no constraints from $\fc{G}$ violate $s$, implying $s\in\opt{\fc{G}}$ and $\Z(G)=\sum_{v\in V}s_{v}$. 
If $C$ is not a zero forcing set, then there exists at least one fort $F\in\mathcal{F}_{G}$ with $F\subset V\setminus{C}$, which we refer to as a \emph{violated fort}. 
The minimum fort model, denoted by $\mf{G}{C}$, computes a violated fort of minimum cardinality, with the binary variable $x_{v}$ indicating vertex inclusion in the fort.

\begin{mini!}
    {}{\sum_{v\in V}x_{v}}{}{}\label{eq:mf_obj}
    \addConstraint{\sum_{v\in V}x_{v}}{\geq 1}\label{eq:mf_const1}
    \addConstraint{x_{u}-x_{v}+\sum_{w\in N(u)\setminus\{v\}}x_{w}}{\geq 0,~\quad\forall v\in V,~u\in N(v)}\label{eq:mf_const2}
    \addConstraint{x_{v}}{=0,~\quad\forall v\in C}\label{eq:mf_const3}
    \addConstraint{x}{\in\{0,1\}^{n}}\label{eq:mf_const4}
\end{mini!}

We say that $x$ is a feasible solution of $\mf{G}{C}$ if constraints~\eqref{eq:mf_const1}--\eqref{eq:mf_const4} are satisfied.
Furthermore, we denote by $\feas{\mf{G}{C}}$ the set of all feasible solutions of $\mf{G}{C}$.
In addition, we say that $x\in\feas{\mf{G}{C}}$ is an optimal solution if it minimizes the objective function in~\eqref{eq:mf_obj}.
We denote by $\opt{\mf{G}{C}}$ the set of all optimal solutions of $\mf{G}{C}$.

Note that $\mf{G}{C}$ is nearly equivalent to Model 3 in~\cite{Brimkov2019zf}, with the exception that constraint~\eqref{eq:mf_const3} has been relaxed from the constraint $x_{v}=0,~\forall v\in\cl{C}$.
Theorem~\ref{thm:mf_feasible} shows that the feasible solutions of $\mf{G}{C}$ are equivalent to the feasible solutions of Model 3 in~\cite{Brimkov2019zf}, which correspond to the collection of forts in $V\setminus\cl{C}$.
We prefer the formulation of $\mf{G}{C}$ since it avoids the computation of the closure of $C$, which while polynomial time can still be costly.
\begin{lemma}\label{lem:mf_feasible}
Let $G\in\mathbb{G}$.
Also, let $x\in\{0,1\}^{n}$ and define $F\subseteq V$ such that $v\in F$ if and only if $x_{v}=1$.
Then, $F$ is a fort of $G$ if and only if constraints~\eqref{eq:mf_const1}--\eqref{eq:mf_const2} hold. 
\end{lemma}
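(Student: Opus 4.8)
The plan is to prove both directions of the equivalence by directly unpacking the definition of a fort and matching it against the two inequality families. Recall that a non-empty $F\subseteq V$ is a fort if no vertex $u\in V\setminus F$ has exactly one neighbor in $F$; equivalently, every vertex outside $F$ has either zero neighbors in $F$ or at least two. Constraint~\eqref{eq:mf_const1} simply encodes non-emptiness, so the content is entirely in matching~\eqref{eq:mf_const2} with the ``zero or at least two neighbors'' condition.

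First I would set up notation: for a fixed $v\in V$ and $u\in N(v)$, write the left-hand side of~\eqref{eq:mf_const2} as $x_u - x_v + \sum_{w\in N(u)\setminus\{v\}} x_w$, and note that $\sum_{w\in N(u)} x_w = \sum_{w\in N(u)\setminus\{v\}} x_w + x_v$ is exactly the number of neighbors of $u$ that lie in $F$. I would then do a case analysis on whether $u\in F$ (i.e. $x_u=1$) and whether $v\in F$ (i.e. $x_v=1$). The only case in which~\eqref{eq:mf_const2} can fail is $x_u=0$, $x_v=1$, and $\sum_{w\in N(u)\setminus\{v\}} x_w = 0$, which says precisely that $u\notin F$ but $u$ has exactly one neighbor in $F$, namely $v$. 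So~\eqref{eq:mf_const2} holding for the pair $(v,u)$ is equivalent to: it is \emph{not} the case that $u\notin F$ and $v$ is $u$'s unique neighbor in $F$.

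For the forward direction (fort $\Rightarrow$ constraints): non-emptiness gives~\eqref{eq:mf_const1}, and if some pair $(v,u)$ violated~\eqref{eq:mf_const2} then by the case analysis $u\in V\setminus F$ would have exactly one neighbor in $F$, contradicting the fort property. For the reverse direction (constraints $\Rightarrow$ fort): $F$ is non-empty by~\eqref{eq:mf_const1}; and if $F$ failed to be a fort, there would be a vertex $u\in V\setminus F$ with exactly one neighbor $v\in F$, and then the pair $(v,u)$ — note $u\in N(v)$ since $v$ is a neighbor of $u$ — would violate~\eqref{eq:mf_const2}. I would state both halves crisply using the single case-analysis lemma rather than re-deriving the arithmetic twice.

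I do not expect a genuine obstacle here; this is essentially a definitional unpacking. The one point requiring mild care is the bookkeeping that $\sum_{w\in N(u)\setminus\{v\}} x_w$ counts neighbors of $u$ in $F$ \emph{other than} $v$, so that ``$u$ has exactly one neighbor in $F$, and it is $v$'' translates to $x_v=1$ together with that sum being $0$; getting the $-x_v$ term's role right in~\eqref{eq:mf_const2} is the only place a sign slip could occur. A secondary small point is to make sure the quantifier in~\eqref{eq:mf_const2} ranges over all ordered pairs with $u\in N(v)$, so that the violating pair identified in the reverse direction is actually in scope — which it is, by symmetry of adjacency.
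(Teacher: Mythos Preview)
Your proposal is correct and follows essentially the same approach as the paper: both directions proceed by contradiction, identifying that a violation of~\eqref{eq:mf_const2} occurs precisely when $x_u=0$, $x_v=1$, and $\sum_{w\in N(u)\setminus\{v\}}x_w=0$, i.e., when $u\in V\setminus F$ has $v$ as its unique neighbor in $F$. Your explicit case analysis makes the arithmetic slightly more transparent than the paper's version, but the argument is the same.
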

\begin{proof}
Suppose that $F$ is a fort of $G$.
By definition, $F$ is non-empty; hence, constraint~\eqref{eq:mf_const1} holds.
For the sake of contradiction, suppose that constraint~\eqref{eq:mf_const2} does not hold.
Then, there exists a vertex $v\in F$ with a neighbor $u\in N(v)$ such that $u\notin F$ and $w\notin F$ for all neighbors $w\in N(u)\setminus\{v\}$.
However, this implies that there exists a vertex $u\in V\setminus{F}$ that has exactly one neighbor in $F$, which contradicts $F$ being a fort of $G$.

Conversely, suppose that $x$ satisfies constraints~\eqref{eq:mf_const1}--\eqref{eq:mf_const2}.
Then, $F$ is non-empty.
For the sake of contradiction, suppose that there exists a vertex $u\in V\setminus{F}$ such that $N(u)\cap F=\{v\}$.
Then, there is a $v\in F$ such that $v$ has a neighbor $u$ where $u$ and all of its neighbors except for $v$ are not in $F$, which contradicts constraint~\eqref{eq:mf_const2}.
\end{proof}
\begin{theorem}\label{thm:mf_feasible}
Let $G\in\mathbb{G}$ and let $C\subseteq V$.
Also, let $x\in\{0,1\}^{n}$ and define $F\subseteq V$ such that $v\in F$ if and only if $x_{v}=1$. 
Then, $F$ is a fort of $G$ and $F\subseteq V\setminus\cl{C}$ if and only if constraints~\eqref{eq:mf_const1}--\eqref{eq:mf_const3} hold.
\end{theorem}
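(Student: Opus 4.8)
The plan is to combine Lemma~\ref{lem:mf_feasible}, which already characterizes forts in terms of constraints~\eqref{eq:mf_const1}--\eqref{eq:mf_const2}, with a separate analysis of constraint~\eqref{eq:mf_const3}. First I would prove the easy direction. Suppose $F$ is a fort of $G$ with $F\subseteq V\setminus\cl{C}$. By Lemma~\ref{lem:mf_feasible}, constraints~\eqref{eq:mf_const1}--\eqref{eq:mf_const2} hold. Since $C\subseteq\cl{C}$, we have $F\cap C\subseteq F\cap\cl{C}=\emptyset$, so $x_v=0$ for every $v\in C$, giving constraint~\eqref{eq:mf_const3}.

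For the converse, suppose $x$ satisfies~\eqref{eq:mf_const1}--\eqref{eq:mf_const3}. By Lemma~\ref{lem:mf_feasible}, $F$ is a fort of $G$, and by~\eqref{eq:mf_const3} we know $F\subseteq V\setminus C$. It remains to upgrade $F\subseteq V\setminus C$ to $F\subseteq V\setminus\cl{C}$; equivalently, to show $F\cap\cl{C}=\emptyset$. The natural argument is by induction on the time steps of the zero forcing process that produces $\cl{C}$ from $C$. Concretely, recall $\cl{C}=C^{[t^*]}$ where $C^{[0]}=C$ and $C^{[t]}=C^{[t-1]}\cup C^{(t)}$. I would show $F\cap C^{[t]}=\emptyset$ for all $t$ by induction: the base case $F\cap C^{[0]}=F\cap C=\emptyset$ is constraint~\eqref{eq:mf_const3}. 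For the inductive step, assume $F\cap C^{[t-1]}=\emptyset$ and suppose for contradiction some $v\in C^{(t)}\cap F$. Then $v$ was forced by some neighbor $u$ with $u$ and all of its other neighbors lying in $C^{[t-1]}$; in particular $u\notin F$ and $N(u)\setminus\{v\}\subseteq C^{[t-1]}$, so by the inductive hypothesis $N(u)\cap F\subseteq\{v\}$, hence $N(u)\cap F=\{v\}$. But then $u\in V\setminus F$ has exactly one neighbor in $F$, contradicting that $F$ is a fort. This completes the induction, and taking $t=t^*$ yields $F\cap\cl{C}=\emptyset$, i.e. $F\subseteq V\setminus\cl{C}$.

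I expect the main (and essentially only) obstacle to be the inductive step verifying that the fort cannot meet a newly forced vertex — this is the place where the fort condition and the zero forcing color change rule interact, and care is needed to argue that $u\notin F$ (which follows since $u\in C^{[t-1]}$) and that the remaining neighbors of $u$ are outside $F$ (the inductive hypothesis). Everything else is bookkeeping: the forward direction is immediate from $C\subseteq\cl{C}$, and Lemma~\ref{lem:mf_feasible} supplies the fort characterization for free. One subtlety worth a sentence in the writeup is that $\phi(C)$ may not be unique, but any valid collection of forces producing $\cl{C}$ works for the induction, since the argument only uses the existence of some forcing $u\rightarrow v$ at step $t$ with $u$ and $N(u)\setminus\{v\}$ in $C^{[t-1]}$.
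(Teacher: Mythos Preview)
Your proposal is correct and follows essentially the same approach as the paper: both directions invoke Lemma~\ref{lem:mf_feasible} for the fort characterization, use $C\subseteq\cl{C}$ for the forward direction, and argue the converse by showing that if $F\cap\cl{C}\neq\emptyset$ then some vertex outside $F$ would have exactly one neighbor in $F$. The only difference is granularity: the paper compresses your inductive step into a single sentence (``Then, there is a vertex not in $F$ that has exactly one neighbor in $F$''), whereas you spell out the induction on time steps explicitly, which is a cleaner and more rigorous presentation of the same idea.
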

\begin{proof}
Suppose $F$ is a fort of $G$ and $F\subseteq V\setminus\cl{C}$.
Then, by Lemma~\ref{lem:mf_feasible}, constraints~\eqref{eq:mf_const1}--\eqref{eq:mf_const2} hold.
Moreover, we have
\[
F\subseteq V\setminus\cl{C} \subseteq V\setminus{C},
\]
so constraint~\eqref{eq:mf_const3} also holds.

Conversely, suppose that $x$ satisfies constraints~\eqref{eq:mf_const1}--\eqref{eq:mf_const3}.
Then, $F\subseteq V\setminus{C}$ and, by Lemma~\ref{lem:mf_feasible}, $F$ is a fort of $G$.
For the sake of contradiction, suppose that $F\cap\cl{C}\neq\emptyset$.
Then, there is a vertex not in $F$ that has exactly one neighbor in $F$, which contradicts $F$ being a fort. 
\end{proof}

The following result shows that the optimal solutions of $\mf{G}{C}$ correspond to violated forts of minimum cardinality. 
\begin{corollary}\label{cor:mf_optimal}
Let $G\in\mathbb{G}$ and let $C\subseteq V$.
Also, let $x\in\opt{\mf{G}{C}}$ and $F\subseteq V$ such that $v\in F$ if and only if $x_{v}=1$.
Then, $F$ is a fort of $G$ of minimum cardinality such that $F\subseteq V\setminus\cl{C}$.
\end{corollary}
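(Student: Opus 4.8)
The plan is to combine Theorem~\ref{thm:mf_feasible} with a standard optimality argument. Let $x\in\opt{\mf{G}{C}}$ and define $F\subseteq V$ by $v\in F$ if and only if $x_{v}=1$. Since $x\in\feas{\mf{G}{C}}$, constraints~\eqref{eq:mf_const1}--\eqref{eq:mf_const3} hold, so Theorem~\ref{thm:mf_feasible} immediately gives that $F$ is a fort of $G$ with $F\subseteq V\setminus\cl{C}$. It remains to show $F$ has minimum cardinality among all such forts.

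First I would observe that $\abs{F}=\sum_{v\in V}x_{v}$, the objective value of $x$. For the sake of contradiction, suppose there is a fort $\hat{F}$ of $G$ with $\hat{F}\subseteq V\setminus\cl{C}$ and $\abs{\hat{F}}<\abs{F}$. Define $\hat{x}\in\{0,1\}^{n}$ by $\hat{x}_{v}=1$ if and only if $v\in\hat{F}$. By the converse direction of Theorem~\ref{thm:mf_feasible}, since $\hat{F}$ is a fort contained in $V\setminus\cl{C}$, the vector $\hat{x}$ satisfies constraints~\eqref{eq:mf_const1}--\eqref{eq:mf_const3}; together with the binary requirement this means $\hat{x}\in\feas{\mf{G}{C}}$. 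But then $\sum_{v\in V}\hat{x}_{v}=\abs{\hat{F}}<\abs{F}=\sum_{v\in V}x_{v}$, contradicting the optimality of $x$. Hence no such $\hat{F}$ exists, and $F$ is a fort of minimum cardinality subject to $F\subseteq V\setminus\cl{C}$.

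There is no real obstacle here; the content has already been front-loaded into Lemma~\ref{lem:mf_feasible} and Theorem~\ref{thm:mf_feasible}, and this corollary is just the optimization wrapper around that equivalence. The only point requiring a little care is making sure the bijection between feasible $\{0,1\}^{n}$ vectors and forts contained in $V\setminus\cl{C}$ is used in both directions — to certify that $F$ itself is valid, and to manufacture the competitor $\hat{x}$ from a hypothetical smaller fort $\hat{F}$. Everything else is the routine ``feasible solution of smaller objective value contradicts optimality'' pattern already used in Corollaries~\ref{cor:im_opt} and~\ref{cor:tsm_opt}.
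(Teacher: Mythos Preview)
Your proof is correct and follows the same approach as the paper's: apply Theorem~\ref{thm:mf_feasible} to certify that $F$ is a fort contained in $V\setminus\cl{C}$, then use optimality of $x$ to rule out any smaller such fort. The paper's version compresses the minimality step into a single sentence rather than writing out the explicit contradiction with $\hat{F}$ and $\hat{x}$, but the underlying argument is identical.
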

\begin{proof}
Since $x\in\opt{\mf{G}{C}}\subseteq\feas{\mf{G}{C}}$, Theorem~\ref{thm:mf_feasible} implies that $F$ is a fort of $G$ and $F\subseteq V\setminus\cl{C}$.
Since $x$ is optimal with respect to the objective in~\eqref{eq:mf_obj}, it follows that there is no fort of $G$ in $V\setminus\cl{C}$ with cardinality less than $\abs{F}$. 
\end{proof}
\subsection{Linear programming relaxation}\label{subsec:fort_cover_relax}
The linear programming relaxation of $\fc{G}$ is the model that arises by removing the integrality constraint in~\eqref{eq:fc_const2}; we denote this model by $\lfc{G}$ and state it in~\eqref{eq:lfc_obj}--\eqref{eq:lfc_const2}.
Note that the continuous variable $s_{v}$ indicates the weight of the vertex $v$. 
\begin{mini!}
	{}{\sum_{v\in V}s_{v}}{}{}\label{eq:lfc_obj}
	\addConstraint{\sum_{v\in F}s_{v}}{\geq 1,~\quad\forall F\in \mathcal{F}_{G}}\label{eq:lfc_const1}
    \addConstraint{s}{\in [0,1]^{n}}\label{eq:lfc_const2}
 \end{mini!}

A solution $s$ is feasible for $\lfc{G}$ if it satisfies constraints \eqref{eq:lfc_const1}--\eqref{eq:lfc_const2}. 
The set of feasible solutions is denoted $\feas{\lfc{G}}$.
A feasible solution $s$ corresponds to a weighting of the vertices of $G$ such that the sum of weights over each fort is at least $1$. 
An optimal solution $s \in \feas{\lfc{G}}$ minimizes the objective function in \eqref{eq:lfc_obj}, and is represented by $\opt{\lfc{G}}$.
Such solutions yield the minimum total weight, known as the fractional zero forcing number $\Z^{*}(G)$, as defined in~\cite{Cameron2023}.

Since constraint~\eqref{eq:lfc_const1} requires all minimal forts of $G$, $\lfc{G}$ must use constraint generation.
In particular, a relaxed model is obtained from the full model $\lfc{G}$ by omitting the constraints in~\eqref{eq:lfc_const1}; then, the relaxed model is solved and a set of violated constraints from the full model are added to the relaxed model, and this process is repeated until there are no more violated constraints. Before we further explain how to apply constraint generation, we briefly motivate the necessity of this methodology. 

It is natural to wonder whether constraint generation is needed for $\lfc{G}$, or whether it suffices to apply constraints generated for the Fort Cover Model $\fc{G}$ to the relaxed Fort Cover Model $\lfc{G}$. Example~\ref{ex:non_sufficient_const} shows this is not the case.
 \begin{example}\label{ex:non_sufficient_const}
Consider the graph $C_{5}$, shown in Figure~\ref{fig:c5}, which has minimal forts $F_{1}=\{1,2,4\}$, $F_{2}=\{2,3,5\}$, $F_{3} = \{1,3,4\}$, $F_{4} = \{2,4,5\}$, $F_{5} = \{1, 3, 5\}$.
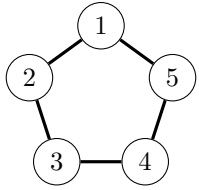
\begin{figure}[ht!]
\centering
\begin{tikzpicture}
[nodeFilledDecorate/.style={shape=circle,inner sep=2pt,draw=black,fill=lightgray,thick},%
    nodeEmptyDecorate/.style={shape=circle,inner sep=2pt,draw=black,thick},%
    lineDecorate/.style={black,=>latex',very thick},%
    x=1cm,y=1cm,scale=1.0]
    \node[circle,draw=black,fill=white] (1) at (0,1) {$1$};
    \node[circle,draw=black,fill=white] (2) at (-0.951,0.309) {$2$};
    \node[circle,draw=black,fill=white] (3) at (-0.588,-0.809) {$3$};
    \node[circle,draw=black,fill=white] (4) at (0.588,-0.809) {$4$};
    \node[circle,draw=black,fill=white] (5) at (0.951,0.309) {$5$};

    \foreach \x/\y in {1/2,2/3,3/4,4/5,5/1}
        \draw[black,=>latex',-,very thick] (\x) -- (\y);
\end{tikzpicture}
\caption{Graph $C_{5}$}
\label{fig:c5}
\end{figure}

Suppose that $\sum_{v\in F_{i}}s_{v}\geq 1$, for $i\in\{1,2,3\}$, are the only constraints given to the Fort Cover Model $\fc{G}$.
Then, $s=\{1,1,0,0,0\}$ is a feasible solution that does not violate any other fort constraint, i.e., $\sum_{v\in F_{i}}s_{v}\geq 1$, for $i\in\{4,5\}$.
However, if $\sum_{v\in F_{i}}s_{v}\geq 1$, for $i\in\{1,2,3\}$, are the only constraints given to the relaxed Fort Cover Model $\lfc{G}$, then $s=\{0.5,0.5,0.5,0,0\}$ is a feasible solution that violates the constraint $\sum_{v\in F_{4}}s_{v}\geq 1$.
\end{example}

Now, we provide details on our constraint generation method for $\lfc{G}$. 
Let $s \in [0,1]^{n}$ be a solution to the relaxed model.
To generate a violated fort, that is, a fort that violates~\eqref{eq:lfc_const1}, we use a modification of the minimum fort model.
We reference this model as the fractional minimum fort model, denote by $\lmf{G}{s}$, and state in~\eqref{eq:lmf_obj}--\eqref{eq:lmf_const4}.
Note that the binary variable $x_{v}$ indicates whether the vertex $v$ is in the fort.

\begin{mini!}
    {}{\sum_{v\in V}s_{v}x_{v}}{}{}\label{eq:lmf_obj}
    \addConstraint{\sum_{v\in V}x_{v}}{\geq 1}\label{eq:lmf_const1}
    \addConstraint{x_{u}-x_{v}+\sum_{w\in N(u)\setminus\{v\}}x_{w}}{\geq 0,~\quad\forall v\in V,~u\in N(v)}\label{eq:lmf_const2}
    \addConstraint{x}{\in\{0,1\}^{n}}\label{eq:lmf_const4}
\end{mini!}

We say that $x$ is a feasible solution of $\lmf{G}{s}$ if constraints~\eqref{eq:lmf_const1}--\eqref{eq:lmf_const4} are satisfied.
Furthermore, we denote by $\feas{\lmf{G}{s}}$ the set of all feasible solutions of $\lmf{G}{s}$.
In addition, we say that $x\in\feas{\lmf{G}{s}}$ is an optimal solution if it minimizes the objective function in~\eqref{eq:lmf_obj}.
We denote by $\opt{\lmf{G}{s}}$ the set of all optimal solutions of $\lmf{G}{s}$.
The following result shows that, under suitable conditions, the optimal solutions of $\lmf{G}{s}$ correspond to violated forts of $\lfc{G}$. 
\begin{theorem}\label{thm:lmf_optimal}
Let $G\in\mathbb{G}$ and let $s\in[0,1]^{n}$.
Then, there is a violated fort if and only if the objective value of all $x\in\opt{\lmf{G}{s}}$ is less than one. 
\end{theorem}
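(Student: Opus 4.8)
The plan is to establish the biconditional by unwinding what ``violated fort'' means and connecting it to the optimal objective value of $\lmf{G}{s}$. By Lemma~\ref{lem:mf_feasible}, a binary vector $x$ satisfies constraints~\eqref{eq:lmf_const1}--\eqref{eq:lmf_const2} if and only if the set $F=\{v\in V\colon x_v=1\}$ is a fort of $G$; these are exactly the constraints defining $\feas{\lmf{G}{s}}$. So the feasible solutions of $\lmf{G}{s}$ are in bijection with the forts of $G$, and for the fort $F$ corresponding to $x$ the objective value $\sum_{v\in V}s_v x_v$ equals $\sum_{v\in F}s_v$. A fort $F$ is \emph{violated} (with respect to $s$ as a candidate solution of $\lfc{G}$) precisely when $\sum_{v\in F}s_v<1$, i.e.\ when constraint~\eqref{eq:lfc_const1} fails for $F$.

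First I would prove the forward direction: if there is a violated fort $F$, let $x$ be its incidence vector. Then $x\in\feas{\lmf{G}{s}}$ by Lemma~\ref{lem:mf_feasible}, and its objective value is $\sum_{v\in F}s_v<1$. Hence the minimum objective value over $\feas{\lmf{G}{s}}$ is at most $\sum_{v\in F}s_v<1$, so every $x\in\opt{\lmf{G}{s}}$ has objective value less than one (since all optimal solutions share the common optimal value). For the converse: suppose every $x\in\opt{\lmf{G}{s}}$ has objective value less than one. Pick any such optimal $x$ and let $F=\{v\colon x_v=1\}$; by Lemma~\ref{lem:mf_feasible}, $F$ is a fort, and $\sum_{v\in F}s_v=\sum_{v\in V}s_v x_v<1$, so $F$ violates~\eqref{eq:lfc_const1} and is therefore a violated fort.

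One routine point to address is that $\feas{\lmf{G}{s}}$ is nonempty and the minimum is attained, so that $\opt{\lmf{G}{s}}\neq\emptyset$ and the phrase ``the objective value of all $x\in\opt{\lmf{G}{s}}$'' is meaningful; this follows because $G$ has at least one fort (for instance $V$ itself, or any single-vertex fort when one exists; in general $V$ is always a fort since no vertex lies outside it) and the feasible region is a finite subset of $\{0,1\}^n$. I would state this briefly at the start.

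The main obstacle is essentially bookkeeping rather than depth: making sure the notion of ``violated fort'' is pinned to the right inequality. In the Fort Cover Model context of Section~\ref{subsec:fort_cover_cg}, a violated fort was defined relative to an integral solution $C$ as a fort contained in $V\setminus C$; here the relevant notion is the fractional one, namely a fort $F$ with $\sum_{v\in F}s_v<1$, which is exactly a fort whose constraint in~\eqref{eq:lfc_const1} is violated by the current relaxed solution $s$. I would make this definition explicit in the proof (or just before it) so that the biconditional reads cleanly, and then the argument reduces to the two short implications above together with the observation that all optimal solutions of a linear (here, linear-objective $0/1$) program attain the same optimal value.
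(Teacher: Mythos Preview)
Your proposal is correct and follows essentially the same approach as the paper's own proof: both directions use Lemma~\ref{lem:mf_feasible} to identify feasible points of $\lmf{G}{s}$ with forts, then compare the objective $\sum_{v\in V}s_vx_v=\sum_{v\in F}s_v$ with $1$. Your added remarks on nonemptiness of $\feas{\lmf{G}{s}}$ and on pinning down the meaning of ``violated fort'' are small clarifications the paper omits, but they do not change the argument.
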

\begin{proof}
Suppose there is a fort $F\subseteq V$ of $G$ such that constraint~\eqref{eq:lfc_const1} does not hold. 
Let $x\in\{0,1\}^{n}$ such that $x_{v}=1$ if and only if $v\in F$. 
By Lemma~\ref{lem:mf_feasible}, $x\in\feas{\lmf{G}{s}}$. 
Furthermore, since constraint~\eqref{eq:lfc_const1} does not hold, it follows that
\[
\sum_{v\in V}s_{v}x_{v} < 1.
\]
So, every $x\in\opt{\lmf{G}{s}}$ has an objective value less than $1$. 

Conversely, suppose that every $x\in\opt{\lmf{G}{s}}$ has an objective value less than one. 
Let $x\in\opt{\lmf{G}{s}}$ and $F\subseteq V$ such that $v\in F$ if and only if $x_{v}=1$.
By Lemma~\ref{lem:mf_feasible}, $F$ is a fort of $G$.
Thus, 
\[
\sum_{v\in F}s_{v} = \sum_{v\in V}s_{v}x_{v} < 1,
\]
so constraint~\eqref{eq:lfc_const1} does not hold. 
\end{proof}

\section{Minimal fort model}\label{sec:min_fort_model}
In this section, we modify the minimum fort model in~\eqref{eq:mf_obj}--\eqref{eq:mf_const4} to obtain a model for the computation of all minimal forts of a graph. 
Given a graph $G\in\mathbb{G}$, a fort $F\subseteq V$ is \emph{minimal} if no fort of $G$ is a proper subset of $F$. 

Let $G\in\mathbb{G}$ and let $\mathcal{F}$ denote a, possibly empty, collection of minimal forts of $G$. 
The minimal fort model with respect to $\mathcal{F}$, which we denote by $\mff{G}{\mathcal{F}}$ and state in~\eqref{eq:mff_obj}--\eqref{eq:mff_const4}, computes a fort of $G$ of minimum cardinality that is not a superset of any fort in $\mathcal{F}$.
Note that the binary variable $x_{v}$ indicates whether the vertex $v$ is in the fort.

\begin{mini!}
    {}{\sum_{v\in V}x_{v}}{}{}\label{eq:mff_obj}
    \addConstraint{\sum_{v\in V}x_{v}}{\geq 1}\label{eq:mff_const1}
    \addConstraint{x_{u}-x_{v}+\sum_{w\in N(u)\setminus\{v\}}x_{w}}{\geq 0,~\quad\forall v\in V,~u\in N(v)}\label{eq:mff_const2}
    \addConstraint{\sum_{v\in F}x_{v}}{\leq\abs{F}-1,~\quad\forall F\in \mathcal{F}}\label{eq:mff_const3} 
    \addConstraint{x}{\in\{0,1\}^{n}}\label{eq:mff_const4}
\end{mini!}

A solution $x$ is feasible for $\mff{G}{\mathcal{F}}$ if it satisfies constraints \eqref{eq:mff_const1}--\eqref{eq:mff_const4}. 
We denote the set of all feasible solutions by $\feas{\mff{G}{\mathcal{F}}}$. 
The following result shows that feasible solutions correspond to forts $F'$ that are minimal with respect to $\mathcal{F}$, meaning $F \not\subseteq F'$ for all $F \in \mathcal{F}$.
\begin{lemma}\label{lem:mff_feasible}
Let $G\in\mathbb{G}$ and $\mathcal{F}$ denote a, possibly empty, collection of forts of $G$.
Also, let $x\in\{0,1\}^{n}$ and $F'\subseteq V$ such that $v\in F'$ if and only if $x_{v}=1$.
Then, $F'$ is a fort of $G$ that is minimal with respect to $\mathcal{F}$ if and only if $x$ satisfies constraints~\eqref{eq:mff_const1}--\eqref{eq:mff_const3}.
\end{lemma}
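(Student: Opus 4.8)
The plan is to prove both directions of the equivalence by leaning on Lemma~\ref{lem:mf_feasible}, which already handles the "fort" part, and then showing that constraint~\eqref{eq:mff_const3} is exactly the statement "$F\not\subseteq F'$ for every $F\in\mathcal{F}$". First I would assume that $F'$ is a fort of $G$ that is minimal with respect to $\mathcal{F}$. Since $F'$ is a fort, Lemma~\ref{lem:mf_feasible} immediately gives that $x$ satisfies constraints~\eqref{eq:mff_const1}--\eqref{eq:mff_const2}. For constraint~\eqref{eq:mff_const3}, fix $F\in\mathcal{F}$; by minimality with respect to $\mathcal{F}$ we have $F\not\subseteq F'$, so there is some $v\in F\setminus F'$, i.e.\ $x_v=0$ while the remaining $\abs{F}-1$ vertices of $F$ contribute at most $1$ each, giving $\sum_{v\in F}x_v\le\abs{F}-1$. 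Hence all of \eqref{eq:mff_const1}--\eqref{eq:mff_const3} hold.

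For the converse, suppose $x$ satisfies \eqref{eq:mff_const1}--\eqref{eq:mff_const3}. From \eqref{eq:mff_const1}--\eqref{eq:mff_const2} and Lemma~\ref{lem:mf_feasible}, $F'$ is a fort of $G$. To see $F'$ is minimal with respect to $\mathcal{F}$, fix any $F\in\mathcal{F}$ and suppose toward contradiction that $F\subseteq F'$; then $x_v=1$ for every $v\in F$, so $\sum_{v\in F}x_v=\abs{F}$, contradicting \eqref{eq:mff_const3}. Therefore $F\not\subseteq F'$ for all $F\in\mathcal{F}$, which is precisely minimality with respect to $\mathcal{F}$.

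I do not expect a genuine obstacle here; the only thing requiring care is to state cleanly that "a $0/1$ vector $x$ over the finite set $F$ satisfies $\sum_{v\in F}x_v\le\abs{F}-1$ if and only if at least one coordinate is $0$, i.e.\ $F\not\subseteq F'$," which is the combinatorial crux translating \eqref{eq:mff_const3} into a set-containment statement. Everything else is a direct invocation of Lemma~\ref{lem:mf_feasible} together with the definition of "minimal with respect to $\mathcal{F}$" given just before the lemma statement. The proof is short, so I would simply write out both implications in full rather than sketch them.
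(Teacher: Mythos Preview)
Your proposal is correct and follows essentially the same approach as the paper: both invoke Lemma~\ref{lem:mf_feasible} to handle constraints~\eqref{eq:mff_const1}--\eqref{eq:mff_const2}, and both reduce constraint~\eqref{eq:mff_const3} to the set-containment statement $F\not\subseteq F'$ (the paper phrases this as $\abs{F'\cap F}\le\abs{F}-1$, which is exactly your observation that some $v\in F$ has $x_v=0$). The only cosmetic difference is that the paper treats the two implications in the opposite order.
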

\begin{proof}
Suppose that $x$ satisfies constraints~\eqref{eq:mff_const1}--\eqref{eq:mff_const2}.
Then, by Lemma~\ref{lem:mf_feasible}, $F'$ is a fort of $G$.
Furthermore, if constraint~\eqref{eq:mff_const3} holds, then
\[
\abs{F'\cap F}\leq\abs{F}-1,
\]
for all $F\in\mathcal{F}$.
Hence, $F\not\subseteq F'$ for all $F\in\mathcal{F}$.
Therefore, $F'$ is a fort of $G$ that is minimal with respect to $\mathcal{F}$.

Conversely, suppose that $F'$ is a fort of $G$ that is minimal with respect to $\mathcal{F}$.
Since $F'$ is a fort of $G$, Lemma~\ref{lem:mf_feasible} implies that $x$ satisfies constraints~\eqref{eq:mff_const1}--\eqref{eq:mff_const2}. 
Furthermore, since $F'$ is minimal with respect to $\mathcal{F}$, it follows that $x$ satisfies constraint~\eqref{eq:mff_const3}.
\end{proof}

A feasible solution $x$ is optimal if it minimizes the objective function in \eqref{eq:mff_obj}; we denote the set of all optimal solutions by $\opt{\mff{G}{\mathcal{F}}}$. 
If $\mathcal{F}$ is a collection of minimal forts of $G$, then the following theorem shows that the optimal solutions can be used to extend $\mathcal{F}$ to a larger collection of minimal forts of $G$.
\begin{theorem}\label{thm:mff_optimal}
Let $G\in\mathbb{G}$ and $\mathcal{F}$ denote a, possibly empty, collection of minimal forts of $G$. 
Suppose that $\mff{G}{\mathcal{F}}$ is feasible and let $x\in\opt{\mff{G}{\mathcal{F}}}$ and $F'\subseteq V$ such that $v\in F'$ if and only if $x_{v}=1$. 
Then, $\mathcal{F}\cup\{F'\}$ is a collection of minimal forts of $G$.
\end{theorem}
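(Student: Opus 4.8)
The plan is to show two things: first, that $F'$ is itself a minimal fort of $G$, and second, that adding $F'$ to $\mathcal{F}$ keeps every element of the enlarged collection minimal. The second point is immediate once the first is established, since the minimality of each $F \in \mathcal{F}$ does not depend on what other forts are in the collection; so the real content is proving $F'$ is minimal, i.e.\ that no fort of $G$ is a proper subset of $F'$.

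First I would invoke Lemma~\ref{lem:mff_feasible}: since $x \in \opt{\mff{G}{\mathcal{F}}} \subseteq \feas{\mff{G}{\mathcal{F}}}$, the set $F'$ is a fort of $G$ that is minimal with respect to $\mathcal{F}$, meaning $F \not\subseteq F'$ for every $F \in \mathcal{F}$. Suppose, for contradiction, that $F'$ is not a minimal fort of $G$; then there is a fort $F'' \subsetneq F'$. I would consider a minimal fort $\hat{F} \subseteq F''$ (every fort contains a minimal one), so $\hat{F} \subsetneq F'$ is a minimal fort of $G$ with $|\hat{F}| < |F'|$. The goal is to derive a contradiction with the optimality of $x$.

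The key step is to check that $\hat{F}$ is itself a feasible point of $\mff{G}{\mathcal{F}}$ — that is, that $\hat{F}$ is minimal with respect to $\mathcal{F}$. This is where I expect the main subtlety: I need $F \not\subseteq \hat{F}$ for all $F \in \mathcal{F}$. But $\mathcal{F}$ is assumed to be a collection of minimal forts of $G$, and $\hat{F}$ is also a minimal fort of $G$; so if $F \subseteq \hat{F}$ for some $F \in \mathcal{F}$, then minimality of $\hat{F}$ forces $F = \hat{F}$, hence $F = \hat{F} \subsetneq F'$, contradicting $F \not\subseteq F'$. Therefore $F \not\subseteq \hat{F}$ for all $F \in \mathcal{F}$, so by Lemma~\ref{lem:mff_feasible} the indicator vector of $\hat{F}$ lies in $\feas{\mff{G}{\mathcal{F}}}$, and it has objective value $|\hat{F}| < |F'| = \sum_{v \in V} x_v$, contradicting $x \in \opt{\mff{G}{\mathcal{F}}}$. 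Hence $F'$ is a minimal fort of $G$.

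Finally I would conclude: $F'$ is a minimal fort of $G$, and every $F \in \mathcal{F}$ was already a minimal fort of $G$ by hypothesis, so $\mathcal{F} \cup \{F'\}$ is a collection of minimal forts of $G$. (Note $F' \notin \mathcal{F}$ since $F \not\subseteq F'$ in particular rules out $F = F'$, though this observation is not strictly needed for the statement as phrased.) The only real obstacle is the bookkeeping around ``minimal with respect to $\mathcal{F}$'' versus ``minimal in $G$,'' which is handled cleanly by the observation that two distinct minimal forts can never be nested.
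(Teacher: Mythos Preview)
Your proof is correct and follows essentially the same approach as the paper: invoke Lemma~\ref{lem:mff_feasible} to see $F'$ is a fort with $F\not\subseteq F'$ for all $F\in\mathcal{F}$, then suppose some fort sits strictly inside $F'$, show that fort is feasible for $\mff{G}{\mathcal{F}}$, and contradict optimality. The only difference is that you first refine to a \emph{minimal} fort $\hat{F}\subsetneq F'$ and use minimality to rule out $F\subseteq\hat{F}$; the paper skips this refinement and simply observes that $F\subseteq\hat{F}\subset F'$ would immediately violate $F\not\subseteq F'$, so the extra step is harmless but unnecessary.
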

\begin{proof}
Since $x\in\opt{\mff{G}{\mathcal{F}}}\subseteq\feas{\mff{G}{\mathcal{F}}}$, Lemma~\ref{lem:mff_feasible} implies that $F'$ is a fort of $G$ that is minimal with respect to $\mathcal{F}$, that is, $F\not\subseteq F'$ for all $F\in\mathcal{F}$. 
Since $\mathcal{F}$ is a collection of minimal forts of $G$, there is no fort of $G$ that is a proper subset of any fort in $\mathcal{F}$.

For the sake of contradiction, suppose there exists a fort $\hat{F}$ of $G$ such that $\hat{F}\subset F'$.
Since $F'$ is minimal with respect to $\mathcal{F}$, it follows that $\hat{F}$ is a fort of $G$ that is minimal with respect to $\mathcal{F}$. 
Now, define $\hat{x}\in\{0,1\}^{n}$ by $\hat{x}_{v}=1$ if and only if $v\in \hat{F}$. 
Then, by Lemma~\ref{lem:mff_feasible}, $\hat{x}\in\feas{\mff{G}{\mathcal{F}}}$ and 
\[
\sum_{v\in V}\hat{x}_{v} < \sum_{v\in V}x_{v},
\]
which contradicts the optimality of $x$ with respect to the objective function in~\eqref{eq:mff_obj}.
\end{proof}

Using Theorem~\ref{thm:mff_optimal}, we are able to recursively compute all minimal forts of any graph $G$, as shown in Algorithm~\ref{alg:amf}. 
\begin{algorithm}[H]
\caption{All minimal forts of a graph $G\in\mathbb{G}$}
\label{alg:amf}
\begin{algorithmic}
\Function{$\amf$}{$G$}
    \State{$\mathcal{F}\gets\emptyset$}
    \While{$\mff{G}{\mathcal{F}}$ is feasible}
        \State{Let $x\in\opt{\mff{G}{\mathcal{F}}}$}
        \State{$\mathcal{F}\gets\mathcal{F}\cup\left\{v\in V\colon x_{v}=1\right\}$}
    \EndWhile
    \State{\Return{$\mathcal{F}$}}
\EndFunction
\end{algorithmic}
\end{algorithm}
\begin{corollary}\label{cor:all_min_forts}
Let $G\in\mathbb{G}$ and $\mathcal{F}=\amf(G)$.
Then, $\mathcal{F}$ is the collection of all minimal forts of $G$. 
\end{corollary}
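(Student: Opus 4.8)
The plan is to show that Algorithm~\ref{alg:amf} terminates and that the collection $\mathcal{F}$ it returns is exactly the set of all minimal forts of $G$. The argument has three components: termination, soundness (everything returned is a minimal fort), and completeness (every minimal fort is returned).

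First I would address termination. At each iteration, Theorem~\ref{thm:mff_optimal} guarantees that the set $F'$ added to $\mathcal{F}$ is a minimal fort of $G$, and by Lemma~\ref{lem:mff_feasible} the optimal $x$ satisfies constraint~\eqref{eq:mff_const3}, so $F \not\subseteq F'$ for every $F$ already in $\mathcal{F}$; in particular $F'$ is distinct from all previously added forts (since two distinct minimal forts cannot contain one another, $F'$ is genuinely new). Since $G$ has only finitely many subsets of $V$, and hence only finitely many minimal forts, the loop can execute only finitely many times before $\mff{G}{\mathcal{F}}$ becomes infeasible, at which point the algorithm returns.

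Next, soundness: by induction on the iterations, $\mathcal{F}$ is always a collection of minimal forts of $G$ — the base case $\mathcal{F}=\emptyset$ is trivial, and the inductive step is exactly Theorem~\ref{thm:mff_optimal}. So every element returned is a minimal fort.

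The main obstacle, and the heart of the proof, is completeness: I must show that when the loop exits (i.e., $\mff{G}{\mathcal{F}}$ is infeasible), $\mathcal{F}$ already contains \emph{every} minimal fort of $G$. The key observation is the following. Suppose, for contradiction, that $\hat{F}$ is a minimal fort of $G$ with $\hat{F}\notin\mathcal{F}$ at termination. Since $\hat{F}$ is a minimal fort and every $F\in\mathcal{F}$ is a minimal fort, and $\hat{F}\neq F$ for all such $F$, no $F\in\mathcal{F}$ can satisfy $F\subseteq\hat{F}$ (a proper containment would contradict minimality of $\hat{F}$; equality is excluded). Hence $\hat{F}$ is a fort that is minimal with respect to $\mathcal{F}$, so by Lemma~\ref{lem:mff_feasible} its indicator vector is a feasible solution of $\mff{G}{\mathcal{F}}$, contradicting infeasibility. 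Therefore no such $\hat{F}$ exists, and $\mathcal{F}$ is the complete collection of minimal forts. Combining this with soundness gives the result.
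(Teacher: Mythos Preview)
Your proof is correct and follows essentially the same approach as the paper: soundness via Theorem~\ref{thm:mff_optimal} applied iteratively starting from $\mathcal{F}=\emptyset$, and completeness by contradiction using Lemma~\ref{lem:mff_feasible} to exhibit a feasible point from any missing minimal fort. You additionally supply an explicit termination argument, which the paper's proof leaves implicit.
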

\begin{proof}
Since $\mathcal{F}$ starts as the empty set in Algorithm~\ref{alg:amf}, Theorem~\ref{thm:mff_optimal} implies that every fort in $\mathcal{F}$ is a minimal fort of $G$. 
For the sake of contradiction, suppose there exists a minimal fort $F'\subseteq V$ of $G$ that is not in $\mathcal{F}$. 
Then, define $x\in\{0,1\}^{n}$ by $x_{v}=1$ if and only if $v\in F'$. 
Since $F'$ is a minimal fort of $G$, it follows that $F'$ is minimal with respect to $\mathcal{F}$.
Therefore, Lemma~\ref{lem:mff_feasible} implies that $x\in\feas{\mff{G}{\mathcal{F}}}$, which contradicts the completion of Algorithm~\ref{alg:amf}.
\end{proof}
\section{Fort Number Model}\label{sec:fort_number}
Note that the fort number of a graph can be computed via the classical set packing problem~\cite{Nemhauser1999}, given the collection of all minimal forts of $G$ which can be computed via Algorithm~\ref{alg:amf}.
However, there are known families of graphs where the number of minimal forts grows exponentially with the order of the graph, for example, the path, cycle, and windmill graphs~\cite{Becker2025,Brimkov2021}.
In this section, we present a method for computing the fort number of a graph that does not require all minimal forts.
We refer to this method as the Fort Number Model, which we denote by $\fn{G}$ and state in~\eqref{eq:fn_obj}--\eqref{eq:fn_const5}. 
Note that the binary variable $x_{iv}$ indicates whether vertex $v$ is in the $i$th set in the collection; also, the binary variable $z_{i}$ indicates whether the $i$th set is non-empty.

\begin{maxi!}
	{}{\sum_{i=1}^{n}z_{i}}{}{}\label{eq:fn_obj}
    \addConstraint{z_{i}-\sum_{u\in V}x_{iu}}{\leq 0,~\quad\forall i\in\{1,\ldots,n\}}\label{eq:fn_const1}
	\addConstraint{x_{iu}-x_{iv}+\sum_{w\in N(u)\setminus\{v\}}x_{iw}}{\geq 0,~\quad\forall i\in\{1,\ldots,n\},~\forall v\in V,~\forall u\in N(v)}\label{eq:fn_const2}
    \addConstraint{\sum_{i=1}^{n}x_{iu}}{\leq 1,~\quad\forall u\in V}\label{eq:fn_const3}
    \addConstraint{x}{\in \{0,1\}^{n\times n}}\label{eq:fn_const4}
    \addConstraint{z}{\in \{0,1\}^{n}}\label{eq:fn_const5}
\end{maxi!}
\subsection{Feasible solutions}
We say that $(x,z)$ is a feasible solution of $\fn{G}$ if constraints~\eqref{eq:fn_const1}--\eqref{eq:fn_const5} are satisfied.
Furthermore, we denote by $\feas{\fn{G}}$ the set of all feasible solutions of $\fn{G}$.
The following result shows that every feasible solution of $\fn{G}$ corresponds to a collection of pairwise disjoint forts.
\begin{theorem}\label{thm:fn_feas1}
Let $G\in\mathbb{G}$.
For each feasible solution $(x,z)\in\feas{\fn{G}}$ there is a collection of disjoint forts $\mathcal{F}$ such that $\abs{\mathcal{F}}= \sum_{i=1}^{n}z_{i}$.
\end{theorem}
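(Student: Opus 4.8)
The plan is to extract, from a feasible solution $(x,z)$, a collection $\mathcal{F}$ of pairwise disjoint forts whose cardinality equals $\sum_{i=1}^n z_i$. For each index $i \in \{1,\ldots,n\}$ with $z_i = 1$, define $F_i = \{v \in V \colon x_{iv} = 1\}$; the collection $\mathcal{F}$ will consist of exactly these sets $F_i$.

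First I would verify that each such $F_i$ is indeed a fort of $G$. Constraint~\eqref{eq:fn_const1} forces $\sum_{u \in V} x_{iu} \geq z_i = 1$, so $F_i$ is non-empty. Constraint~\eqref{eq:fn_const2}, specialized to the fixed index $i$, is exactly the fort inequality~\eqref{eq:mf_const2} / \eqref{eq:lmf_const2} appearing in the minimum fort models, so by Lemma~\ref{lem:mf_feasible} the non-emptiness together with these inequalities guarantees that $F_i$ is a fort of $G$.

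Next I would show the $F_i$ are pairwise disjoint. If $v \in F_i \cap F_j$ with $i \neq j$, then $x_{iv} = x_{jv} = 1$, so $\sum_{k=1}^n x_{kv} \geq 2$, contradicting constraint~\eqref{eq:fn_const3}. Hence the sets indexed by $\{i \colon z_i = 1\}$ are genuinely distinct (no two coincide, as they are even disjoint and non-empty), so $\abs{\mathcal{F}}$ equals the number of indices $i$ with $z_i = 1$, which is $\sum_{i=1}^n z_i$ since $z \in \{0,1\}^n$.

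I do not anticipate a serious obstacle here; the statement is essentially a direct translation of the constraints into combinatorial language, with the one subtlety being to observe that distinctness of the $F_i$ (needed so that the count $\abs{\mathcal{F}} = \sum_i z_i$ is correct rather than merely an upper bound) follows for free from pairwise disjointness and non-emptiness. One should also note that indices with $z_i = 0$ may still have $x_{iv} = 1$ for various $v$, but these are simply ignored in forming $\mathcal{F}$; they do not affect disjointness of the retained sets because constraint~\eqref{eq:fn_const3} still bounds the total column sum, so in fact every row defines a set disjoint from every other row's set, retained or not.
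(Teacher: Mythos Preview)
Your proof is correct and follows essentially the same approach as the paper: define $F_i=\{v:x_{iv}=1\}$, use constraint~\eqref{eq:fn_const1} for non-emptiness, constraint~\eqref{eq:fn_const2} together with Lemma~\ref{lem:mf_feasible} to conclude each $F_i$ is a fort, and constraint~\eqref{eq:fn_const3} for pairwise disjointness, then collect those $F_i$ with $z_i=1$. Your explicit remark that disjointness plus non-emptiness guarantees the $F_i$ are distinct (so that $\abs{\mathcal{F}}=\sum_i z_i$ exactly) is a point the paper leaves implicit.
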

\begin{proof}
Let $(x,z)\in\feas{\fn{G}}$.
Then, for each $i\in\{1,\ldots,n\}$, define $F_{i}\subseteq V$ such that $u\in F_{i}$ if and only if $x_{iu}=1$.
By constraint~\eqref{eq:fn_const1}, if $z_{i}=1$ then there exists a $u\in V$ such that $x_{iu}=1$.
Therefore, by constraint~\eqref{eq:fn_const2}, if $z_{i}=1$ then Lemma~\ref{lem:mf_feasible} implies that $F_{i}$ is a fort of $G$. 
Moreover, by constraint~\eqref{eq:fn_const3}, the subsets $F_{i}$ are pairwise disjoint.
Now, let $\mathcal{F}$ denote the collection of subsets $F_{i}$ such that $z_{i}=1$.
Then, $\mathcal{F}$ is a collection of pairwise disjoint forts such that $\abs{\mathcal{F}}=\sum_{i=1}^{n}z_{i}$. 
\end{proof}

Next, we show that every collection of disjoint forts corresponds to a feasible solution of $\fn{G}$. 
\begin{theorem}\label{thm:fn_feas2}
Let $G\in\mathbb{G}$.
For each collection of pairwise disjoint forts $\mathcal{F}$, there is a feasible solution $(x,z)\in\feas{\fn{G}}$ such that $\abs{\mathcal{F}}=\sum_{i=1}^{n}z_{i}$. 
\end{theorem}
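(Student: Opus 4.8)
The plan is to read off a feasible solution directly from the given collection $\mathcal{F}$ and then verify the constraints of $\fn{G}$ one family at a time. First I would record the structural fact that makes the construction possible: since the forts in $\mathcal{F}$ are non-empty and pairwise disjoint subsets of the $n$-element set $V$, their number $k := \abs{\mathcal{F}}$ satisfies $k \leq n$. Fix an enumeration $\mathcal{F} = \{F_{1},\ldots,F_{k}\}$, and define, for $i \in \{1,\ldots,n\}$ and $u \in V$, the variable $x_{iu} = 1$ exactly when $i \leq k$ and $u \in F_{i}$ (and $x_{iu}=0$ otherwise), and $z_{i} = 1$ exactly when $i \leq k$ (and $z_{i}=0$ otherwise). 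By construction this makes $\sum_{i=1}^{n} z_{i} = k = \abs{\mathcal{F}}$ and satisfies the integrality constraints~\eqref{eq:fn_const4}--\eqref{eq:fn_const5}.

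The remaining work is to check constraints~\eqref{eq:fn_const1}--\eqref{eq:fn_const3}. For~\eqref{eq:fn_const1}: if $i > k$ both terms vanish; if $i \leq k$ then $z_{i} = 1$ and $\sum_{u \in V} x_{iu} = \abs{F_{i}} \geq 1$, since a fort is non-empty, so $z_{i} - \sum_{u \in V} x_{iu} \leq 0$. For~\eqref{eq:fn_const2}: when $i > k$ the left-hand side is $0$; when $i \leq k$ the block $(x_{iu})_{u \in V}$ is precisely the indicator vector of the fort $F_{i}$, so the inequality coincides with constraint~\eqref{eq:mf_const2} for that vector, which holds by Lemma~\ref{lem:mf_feasible} because $F_{i}$ is a fort. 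For~\eqref{eq:fn_const3}: pairwise disjointness of the $F_{i}$ means each vertex $u$ belongs to at most one block, so $\sum_{i=1}^{n} x_{iu} \leq 1$. Hence $(x,z) \in \feas{\fn{G}}$ with $\sum_{i=1}^{n} z_{i} = \abs{\mathcal{F}}$.

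I do not expect any genuine obstacle here; the statement is the routine bookkeeping converse to Theorem~\ref{thm:fn_feas1}, and each constraint falls out of the defining properties of a collection of pairwise disjoint forts together with Lemma~\ref{lem:mf_feasible}. The only two points that warrant a sentence of care are the inequality $\abs{\mathcal{F}} \leq n$, which is what guarantees there are enough index blocks $i \in \{1,\ldots,n\}$ to host every fort in the collection, and the degenerate case $\mathcal{F} = \emptyset$, which is handled by the all-zero solution.
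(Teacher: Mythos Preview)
Your proposal is correct and follows essentially the same approach as the paper: define $x$ and $z$ from an enumeration of $\mathcal{F}$ and verify constraints~\eqref{eq:fn_const1}--\eqref{eq:fn_const3} using non-emptiness, Lemma~\ref{lem:mf_feasible}, and disjointness, respectively. You are slightly more careful than the paper in explicitly noting that $\abs{\mathcal{F}}\leq n$ and handling the case $\mathcal{F}=\emptyset$, but otherwise the arguments are the same.
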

\begin{proof}
Let $\mathcal{F}=\{F_{1},\ldots,F_{k}\}$ denote a collection of disjoint forts.
Then, define $x\in\{0,1\}^{n\times n}$ and $z_{i}\in\{0,1\}^{n}$ as follows:
For $i=1,\ldots,k$, set $z_{i}=1$ and define $x_{iv}=1$ if and only if $v\in F_{i}$.
For $i=k+1,\ldots,n$, set $z_{i}=0$ and $x_{iv}=0$ for all $v\in V$. 

Since there is a $u\in V$ such that $x_{iu}=1$ whenever $z_{i}=1$, it follows that constraint~\eqref{eq:fn_const1} holds.
Furthermore, if $z_{i}=1$ then $F_{i}$ is a fort; hence, Lemma~\ref{lem:mf_feasible} implies that constraint~\eqref{eq:fn_const2} holds. 
Also, since $\mathcal{F}$ is a collection of pairwise disjoint forts and $x_{iu}=0$ for all $i=k+1,\ldots,n$ and $u\in V$, it follows that constraint~\eqref{eq:fn_const3} holds. 
Therefore, $(x,z)\in\feas{\fn{G}}$ and $\abs{\mathcal{F}}=\sum_{i=1}^{n}z_{i}$.
\end{proof}
\subsection{Optimal solutions}
We say that $(x,z)\in\feas{\fn{G}}$ is an optimal solution if it maximizes the objective function in~\eqref{eq:fn_obj}.
We denote by $\opt{\fn{G}}$ the set of all optimal solutions of $\fn{G}$.
The following result shows that the optimal solutions of $\fn{G}$ correspond to maximum collections of pairwise disjoint forts. 
\begin{corollary}\label{cor:fn_opt}
Let $(x,z)\in\opt{\fn{G}}$. 
Then, $\ft(G) = \sum_{i=1}^{n}z_{i}$.
\end{corollary}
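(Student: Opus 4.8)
The plan is to obtain the equality purely from the two feasibility correspondences already established for $\fn{G}$, namely Theorem~\ref{thm:fn_feas1} and Theorem~\ref{thm:fn_feas2}, via the standard two-sided optimal-value argument.

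First I would invoke Theorem~\ref{thm:fn_feas1}: since $(x,z)\in\opt{\fn{G}}\subseteq\feas{\fn{G}}$, there is a collection $\mathcal{F}$ of pairwise disjoint forts of $G$ with $\abs{\mathcal{F}}=\sum_{i=1}^{n}z_{i}$. By the definition of the fort number this yields $\ft(G)\geq\sum_{i=1}^{n}z_{i}$ immediately.

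For the reverse inequality I would argue by contradiction. Suppose $\ft(G)>\sum_{i=1}^{n}z_{i}$, and let $\hat{\mathcal{F}}$ be a collection of pairwise disjoint forts attaining $\abs{\hat{\mathcal{F}}}=\ft(G)$. Here I would first note that $\ft(G)\leq n$, since the members of $\hat{\mathcal{F}}$ are nonempty pairwise disjoint subsets of a set of cardinality $n$; this bound is exactly what makes the construction in Theorem~\ref{thm:fn_feas2} well defined. Applying that theorem produces a feasible solution $(\hat{x},\hat{z})\in\feas{\fn{G}}$ with $\sum_{i=1}^{n}\hat{z}_{i}=\abs{\hat{\mathcal{F}}}=\ft(G)>\sum_{i=1}^{n}z_{i}$, which contradicts the optimality of $(x,z)$ with respect to the objective in~\eqref{eq:fn_obj}. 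Hence $\ft(G)=\sum_{i=1}^{n}z_{i}$.

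I do not anticipate any real obstacle; the proof is a direct corollary of the preceding two theorems. The only point deserving a line of justification is the inequality $\ft(G)\leq n$, needed so that a maximum disjoint collection of forts can be indexed by $\{1,\ldots,n\}$ in the construction of Theorem~\ref{thm:fn_feas2}, and this follows at once from disjointness and nonemptiness of forts.
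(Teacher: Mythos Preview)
Your proof is correct and follows essentially the same approach as the paper: invoke Theorem~\ref{thm:fn_feas1} on the optimal (hence feasible) solution to get $\ft(G)\geq\sum_{i=1}^{n}z_{i}$, then derive a contradiction from a strictly larger disjoint collection via Theorem~\ref{thm:fn_feas2}. Your explicit remark that $\ft(G)\leq n$ is a minor addition the paper leaves implicit, but it is indeed what ensures the construction in Theorem~\ref{thm:fn_feas2} applies.
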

\begin{proof}
Let $(x,z)\in\opt{\fn{G}}$.
Then, $(x,z)\in\feas{\fn{G}}$ and is maximal with respect to the objective function in~\eqref{eq:fn_obj}. 
Since $(x,z)$ is a feasible solution, Theorem~\ref{thm:fn_feas1} implies that there is a collection of disjoint forts $\mathcal{F}$ such that $\abs{\mathcal{F}}=\sum_{i=1}^{n}z_{i}$. 
Therefore, $\ft(G)\geq \sum_{i=1}^{n}z_{i}$.
For the sake of contradiction, suppose that $\ft(G) > \sum_{i=1}^{n}z_{i}$.
Then, there exists a collection of disjoint forts $\hat{\mathcal{F}}$ such that
\[
\abs{\hat{\mathcal{F}}}=\ft(G) > \sum_{i=1}^{n}z_{i}.
\]
However, Theorem~\ref{thm:fn_feas2} implies that there exists a feasible solution $(\hat{x},\hat{z})$ such that $\abs{\hat{\mathcal{F}}} = \sum_{i=1}^{n}\hat{z}_{i}$, which contradicts the optimality of $(x,z)\in\opt{\fn{G}}$.
\end{proof}
\section{Numerical Experiments}\label{sec:num_exp}
In this section, we provide several numerical experiments that demonstrate the effectiveness of our models when working with small and medium order graphs.
Moreover, we provide experimental evidence for several open conjectures regarding the propagation time interval, the number of minimal forts, the fort number, and the fractional zero forcing number of a graph~\cite{Becker2025,Brimkov2025,Cameron2023}.
Our code is written in C++ and uses the Gurobi Optimizer and is available at~\url{https://github.com/trcameron/ZFIPModels}. 
Our computational results were obtained on a MacBook Pro with an Apple M3 Pro chip and 18 gigabytes of RAM, while the code was compiled using Apple clang version 15.0.0 and Gurobi version 11.0.3.

For each graph tested, we use the Infection Model from Section~\ref{sec:infection} to compute the zero forcing number, minimum propagation time, and throttling number; we use the Time Step Model from Section~\ref{sec:time_step} to compute the zero forcing number, minimum and maximum propagation times, and throttling number; we use the Fort Cover Model and its relaxation from Section~\ref{sec:fort_cover} to compute the zero forcing number and fractional zero forcing number, respectively.
Also, we compute the propagation time interval using Algorithm~\ref{alg:pti}, all minimal forts of a graph using Algorithm~\ref{alg:amf}, and the fort number of a graph using the Fort Number Model in Section~\ref{sec:fort_number}. 

The Gurobi Time Limit parameter is set to $7200$ seconds.
In the case that this time limit is reached, the reported time will be `Time Limit'.
\subsection{Small order graphs}
In this section, we demonstrate the efficiency of our models when working with small order graphs.
In particular, we use the Nauty Traces software from~\cite{Mckay2014} to generate all non-isomorphic graphs of order $4\leq n\leq 9$.
For $n=9$, we use edge bounds to parallelize the generated graphs and the related computations.
In Table~\ref{tab:zf_small_order}, we compare the average time elapsed to compute the zero forcing number and fractional zero forcing number via the Infection Model, Time Step Model, and Fort Cover Model and its relaxation.
\begin{table}[ht!]
    \centering
    \resizebox{0.75\textwidth}{!}{%
    \begin{tabular}{c|c|c|c|c}
    $n$ &  Infection Model & Time Step Model & Fort Cover Model & Fort Cover Model Relaxation \\
    \hline
    4 & 0.001 & 0.001 & 0.002 & 0.001 \\
    5 & 0.002 & 0.002 & 0.002 & 0.002 \\
    6 & 0.003 & 0.005 & 0.003 & 0.002 \\
    7 & 0.007 & 0.016 & 0.006 & 0.005 \\
    8 & 0.014 & 0.057 & 0.011 & 0.011 \\
    9 & 0.035 & 0.211 & 0.029 & 0.028 \\
    \hline
    \end{tabular}%
    }
    \caption{Average time elapsed for computing the zero forcing number and fractional zero forcing number of all non-isomorphic graphs of order $4\leq n\leq 9$.}
    \label{tab:zf_small_order}
\end{table}

In Table~\ref{tab:pt_PT_th_small_order}, we compare the average time elapsed to compute the minimum and maximum propagation times and throttling number of a graph via the Infection Model and Time Step Model. 
\begin{table}[ht!]
    \centering
    \resizebox{0.95\textwidth}{!}{%
    \begin{tabular}{c|c|c|c|c|c}
    $n$ & Infection Model (pt) & Infection Model (th) & Time Step Model (pt) & Time Step Model (PT) & Time Step Model (th) \\
    \hline
    4 & 0.001 & 0.001 & 0.001 & 0.001 & 0.001 \\
    5 & 0.002 & 0.002 & 0.002 & 0.002 & 0.003 \\
    6 & 0.005 & 0.004 & 0.006 & 0.006 & 0.008 \\
    7 & 0.012 & 0.010 & 0.023 & 0.019 & 0.031 \\
    8 & 0.027 & 0.023 & 0.081 & 0.068 & 0.110 \\
    9 & 0.066 & 0.059 & 0.269 & 0.219 & 0.367 \\
    \hline
    \end{tabular}%
    }
    \caption{Average time elapsed for computing the minimum and maximum propagation times and throttling number of all non-isomorphic graphs of order $4\leq n\leq 9$.}
    \label{tab:pt_PT_th_small_order}
\end{table}

In Table~\ref{tab:PTI_fzf_amf_ft_small_order}, we compare the average time elapsed to compute the propagation time interval, all minimal forts, and the fort number of a graph via Algorithm~\ref{alg:pti}, Algorithm~\ref{alg:amf}, and the Fort Number Model, respectively. 
\begin{table}[ht!]
    \centering
    \resizebox{0.45\textwidth}{!}{%
    \begin{tabular}{c|c|c|c}
    $n$ & Algorithm~\ref{alg:pti} & Algorithm~\ref{alg:amf} & Fort Number Model \\
    \hline 
    4 & 0.001 & 0.001 & 0.001 \\
    5 & 0.003 & 0.001 & 0.003 \\
    6 & 0.010 & 0.002 & 0.007 \\
    7 & 0.044 & 0.009 & 0.023 \\
    8 & 0.181 & 0.025 & 0.049 \\
    9 & 0.711 & 0.103 & 0.117 \\
    \end{tabular}%
    }
    \caption{Average time elapsed for computing the realized propagation time interval, all minimal forts, and the fort number of all non-isomorphic graphs of order $4\leq n\leq 9$.}
    \label{tab:PTI_fzf_amf_ft_small_order}
\end{table}
\subsection{Medium order random graphs}
In this section, we provide several numerical experiments that demonstrate the efficiency of our models when working with medium order graphs.
In particular, we use the Nauty Traces software from~\cite{Mckay2014} to generate $10$ random graphs of order $n\in\{10,15\}$ and edge probability $p\in\{0.2,0.3,0.4,0.5,0.6,0.7,0.8\}$.
In Table~\ref{tab:zf_random}, we compare the average time elapsed to compute the zero forcing number of a graph via the Infection Model, Time Step Model, and Fort Cover Model.
\begin{table}[ht!]
    \centering
    \resizebox{0.80\textwidth}{!}{%
    \begin{tabular}{c|c|c|c|c|c}
    $n$ &  $p$ & Infection Model & Time Step Model & Fort Cover Model & Fort Cover Model Relaxation \\
    \hline
    & 0.2 & 0.003 & 0.010 & 0.003 & 0.004 \\
    & 0.3 & 0.011 & 0.164 & 0.009 & 0.013 \\
    & 0.4 & 0.055 & 0.455 & 0.054 & 0.052 \\
    10 & 0.5 & 0.066 & 0.686 & 0.068 & 0.077 \\
    & 0.6 & 0.091 & 0.752 & 0.075 & 0.061 \\
    & 0.7 & 0.137 & 0.970 & 0.119 & 0.070 \\
    & 0.8 & 0.140 & 0.781 & 0.101 & 0.058 \\
    \hline
    & 0.2 & 0.391 & 8.280 & 0.571 & 0.847 \\
    & 0.3 & 1.290 & 39.90 & 1.780 & 1.470 \\
    & 0.4 & 3.520 & 60.10 & 2.580 & 2.060 \\
    15 & 0.5 & 4.540 & 78.10 & 4.040 & 2.140 \\
    & 0.6 & 14.00 & 87.60 & 4.720 & 2.140 \\
    & 0.7 & 16.60 & 83.70  & 3.250 & 1.530 \\
    & 0.8 & 23.90 & 81.90 & 3.690 & 1.020 \\
    \end{tabular}%
    }
    \caption{Average time elapsed for computing the zero forcing number and fractional zero forcing number of $10$ random graphs of order $n\in\{10,15\}$ and edge probability $p\in\{0.2,0.3,0.4,0.5,0.6,0.7,0.8\}$.}
    \label{tab:zf_random}
\end{table}

In Table~\ref{tab:pt_PT_th_random}, we compare the average time elapsed to compute the minimum and maximum propagation times and throttling number of a graph via the Infection Model and Time Step Model. 
\begin{table}[ht!]
    \centering
    \resizebox{1.0\textwidth}{!}{%
    \begin{tabular}{c|c|c|c|c|c|c}
    $n$ &  $p$ & Infection Model (pt) & Infection Model (th) & Time Step Model (pt) & Time Step Model (PT) & Time Step Model (th) \\
    \hline
    & 0.2 & 0.004 & 0.005 & 0.015 & 0.011 & 0.022 \\
    & 0.3 & 0.023 & 0.027 & 0.232 & 0.141 & 0.372 \\
    & 0.4 & 0.099 & 0.095 & 0.648 & 0.465 & 1.220 \\
    10 & 0.5 & 0.134 & 0.095 & 0.836 & 0.588 & 1.320 \\
    & 0.6 & 0.155 & 0.146 & 0.942 & 0.693 & 1.700 \\
    & 0.7 & 0.219 & 0.191 & 1.040 & 0.703 & 1.490 \\
    & 0.8 & 0.205 & 0.198 & 0.764 & 0.669 & 1.260 \\
    \hline
    & 0.2 & 0.640 & 0.627 & 11.30 & 7.740 & 21.40 \\
    & 0.3 & 2.560 & 2.200 & 42.60 & 36.60 & 69.40 \\
    & 0.4 & 6.360 & 3.540 & 71.00 & 57.90 & 117.0 \\
    15 & 0.5 & 9.200 & 5.410 & 76.00 & 59.10 & 136.0 \\
    & 0.6 & 16.00 & 9.920 & 82.00 & 64.40 & 141.0 \\
    & 0.7 & 27.30 & 13.80 & 73.30 & 55.90 & 133.0 \\
    & 0.8 & 45.80 & 31.30 & 90.90 & 72.30 & 139.0 \\
    \end{tabular}%
    }
    \caption{Average time elapsed for computing the minimum and maximum propagation times and throttling number of $10$ random graphs of order $n\in\{10,15\}$ and edge probability $p\in\{0.2,0.3,0.4,0.5,0.6,0.7,0.8\}$.}
    \label{tab:pt_PT_th_random}
\end{table}

In Table~\ref{tab:PTI_fzf_amf_ft_random}, we compare the average time elapsed to compute the propagation time interval, fractional zero forcing number, all minimal forts, and the fort number of a graph via Algorithm~\ref{alg:pti}, the Fort Cover Model relaxation, Algorithm~\ref{alg:amf}, and the Fort Number Model, respectively. 
\begin{table}[ht!]
    \centering
    \resizebox{0.50\textwidth}{!}{%
    \begin{tabular}{c|c|c|c|c}
    $n$ & $p$ & Algorithm~\ref{alg:pti} & Algorithm~\ref{alg:amf} & Fort Number Model \\
    \hline
    & 0.2 & 0.025 & 0.003 & 0.026 \\
    & 0.3 & 0.723 & 0.045 & 0.075 \\
    & 0.4 & 2.320 & 0.269 & 0.226 \\
    10 & 0.5 & 3.040 & 0.408 & 0.199 \\
    & 0.6 & 3.180 & 0.378 & 0.310 \\
    & 0.7 & 3.010 & 0.608 & 0.463 \\
    & 0.8 & 2.180 & 0.446 & 0.656 \\
    \hline
    & 0.2 & 56.70 & 6.960 & 2.480 \\
    & 0.3 & 190.0 & 54.20 & 7.350 \\
    & 0.4 & 341.0 & 127.0 & 26.40 \\
    15 & 0.5 & 363.0 & 148.0 & 75.70 \\
    & 0.6 & 298.0 & 141.0 & 855.0 \\
    & 0.7 & 271.0 & 94.10 & 1370 \\
    & 0.8 & 272.0 & 75.7 & Time Limit \\
    \end{tabular}%
    }
    \caption{Average time elapsed for computing the realized propagation time interval, all minimal forts, and the fort number of $10$ random graphs of order $n\in\{10,15\}$ and edge probability $p\in\{0.2,0.3,0.4,0.5,0.6,0.7,0.8\}$.}
    \label{tab:PTI_fzf_amf_ft_random}
\end{table}
\subsection{The Propagation time interval}
In this section, we use Algorithm~\ref{alg:pti} to provide experimental evidence for Conjecture~\ref{con:pti_hypercube}.
In Table~\ref{tab:pti_hypercube}, we display the minimum and maximum propagation times for $Q_{d}$, for $d\in\{2,3,4,5\}$, along with the realized propagation time interval.
By examination, it can be seen that Conjecture~\ref{con:pti_hypercube} holds for dimensions $2\leq d\leq 5$.
\begin{table}[ht!]
    \centering
    \resizebox{0.5\textwidth}{!}{%
    \begin{tabular}{c|c|c|c}
    $d$ & $\pt(Q_{d})$ & $\PT(Q_{d})$ & Realized Propagation Time Interval \\
    \hline 
    $2$ & $1$ & $1$ & $\{1\}$ \\
    $3$ & $1$ & $2$ & $\{1,2\}$ \\
    $4$ & $1$ & $4$ & $\{1,2,3,4\}$ \\
    $5$ & $1$ & $8$ & $\{1,2,3,4,5,6,7,8\}$ \\
    \end{tabular}%
    }
    \caption{The minimum and maximum propagation times for $Q_{d}$, for $d\in\{2,3,4,5\}$, along with the realized propagation time interval.}
    \label{tab:pti_hypercube}
\end{table}

It is worth noting that we needed to reduce the parameter $T$ from its default value of $n-1$ in order to compute the realized propagation time interval, when $d=5$, without Gurobi reaching its set time limit.
In particular, we set $T=2^{d-1}$, which is valid since every minimum zero forcing set of the hypercube graph $Q_{d}$ has cardinality $2^{d-1}$, thus leaving $2^{d-1}$ vertices remaining to be forced. 
\subsection{The number of minimal forts}
In this section, we use Algorithm~\ref{alg:amf} to provide experimental evidence for Conjecture~\ref{con:tree_gr}.
Let $T_{n}$ denote a tree with a maximum number of minimal forts over all trees of order $n$.
In Table~\ref{tab:amf_tree}, we compare the number of minimal forts of $T_{n}$ to the number of minimal forts of $P_{n}$.
It turns out that the tree $T_{n}$ is unique, at least for $n\leq 23$. 
For this reason, we include its maximum degree and diameter in Table~\ref{tab:amf_tree}. 

\begin{table}[ht!]
    \centering
    \resizebox{0.40\textwidth}{!}{%
    \begin{tabular}{c|c|c|c|c|c}
    $n$ & $\abs{\mathcal{F}_{P_{n}}}$ & $\abs{\mathcal{F}_{T_{n}}}$ & $\frac{\abs{\mathcal{F}_{T_{n}}}}{\abs{\mathcal{F}_{P_{n}}}}$ & $\Delta(T_{n})$ & $\diam(T_{n})$ \\
    \hline
    $16$ & $49$ & $105$ & $2.143$ & $15$ & $2$ \\
    $17$ & $65$ & $120$ & $1.846$ & $16$ & $2$ \\
    $18$ & $86$ & $136$ & $1.581$ & $17$ & $2$ \\
    $19$ & $114$ & $162$ & $1.421$ & $5$ & $4$ \\
    $20$ & $151$ & $213$ & $1.411$ & $5$ & $4$ \\
    $21$ & $200$ & $280$ & $1.400$ & $5$ & $4$ \\
    $22$ & $265$ & $348$ & $1.313$ & $6$ & $4$ \\
    $23$ & $351$ & $453$ & $1.291$ & $5$ & $4$ \\
    \end{tabular}%
    }
    \caption{The maximum number of minimal forts over all trees of order $16\leq n\leq 23$ is compared to the number of minimal forts of $P_{n}$.}
    \label{tab:amf_tree}
\end{table}

We use the formula in~\cite[Corollary 14]{Becker2025} for the number of minimal forts of $P_{n}$.
For $T_{n}$, we use the Nauty Traces software from~\cite{Mckay2014} to generate all non-isomorphic trees of order $n\geq 1$.
Then, we use Algorithm~\ref{alg:amf} to compute all minimal forts of each tree, and we  record $T_{n}$ as the tree with the maximum number of minimal forts. 
For larger $n$, we use diameter bounds to parallelize the generated trees and the related computations. 

For $3\leq n\leq 18$, $T_{n}$ is the star graph of order $n$ and has $\binom{n-1}{2}$ minimal forts. 
For this reason, we did not include the values in Table~\ref{tab:amf_tree} for $n<16$.
For $19\leq n\leq 22$, $T_{n}$ is a tree with a single central vertex adjacent to $4$ junction vertices, each of which is adjacent to $3\leq l\leq 5$ leaves. 

With regards to Conjecture~\ref{con:tree_gr}, it is worth noting that the ratio $\frac{\abs{\mathcal{F}_{T_{n}}}}{\abs{\mathcal{F}_{P_{n}}}}$ is clearly bounded above by $\binom{n}{2}$, for $1\leq n\leq 23$.
Moreover, the ratio $\frac{\abs{\mathcal{F}_{T_{n}}}}{\abs{\mathcal{F}_{P_{n}}}}$ appears to be approaching $1$, which would imply that the limit of the ratio of consecutive terms for $\abs{\mathcal{F}_{T_{n}}}$ is equal to the limit of the ratio of consecutive terms for $\abs{\mathcal{F}_{P_{n}}}$, that is, the plastic ratio $\psi$.
\subsection{Lower bounds on the maximum nullity}
In this section, we provide experimental evidence for Conjecture~\ref{con:mn_lower_bounds}.
Note that this conjecture holds for all graphs of order seven or less since for these graphs the maximum nullity is equal to the zero forcing number. 
In~\cite{Barrett2025}, the authors determine the maximum nullity of all graphs of order eight.
In particular, see Theorem~\ref{thm:max_nullity8}, they identify a collection of graphs $\mathcal{E}=\left\{E_{1},E_{2},\ldots,E_{8}\right\}$ such that for all graphs $G$ of order eight, $\M(G)<\Z(G)$ if and only if $G\in\mathcal{E}$.
Moreover, they show that $\M(E_{i})=\Z(E_{i})-1$, for all $1\leq i\leq 7$.
In Table~\ref{tab:bhhs_max_nullity}, we display the computed values of the fort number and fractional zero forcing number along with the known values of the maximum nullity and zero forcing number of $E_{1},E_{2},\ldots,E_{7}$.
Since $\Z^{*}(E_{i}) < M(E_{i})$, for all $1\leq i\leq 7$, it follows that Conjecture~\ref{con:mn_lower_bounds} holds for all graphs of order eight or less. 
\begin{table}[ht!]
    \centering
    \resizebox{0.30\textwidth}{!}{%
    \begin{tabular}{c|c|c|c|c}
    $E_{i}$ & $\ft(E_{i})$ & $\Z^{*}(E_{i})$ & $\M(E_{i})$ & $\Z(E_{i})$ \\
    \hline 
    $E_{1}$ & $1$ & $1.666667$ & $2$ & $3$ \\
    $E_{2}$ & $2$ & $2.666667$ & $3$ & $4$ \\
    $E_{3}$ & $2$ & $2.666667$ & $3$ & $4$ \\
    $E_{4}$ & $2$ & $2.666667$ & $3$ & $4$ \\
    $E_{5}$ & $2$ & $2.666667$ & $3$ & $4$ \\
    $E_{6}$ & $2$ & $2.666667$ & $3$ & $4$ \\
    $E_{7}$ & $2$ & $2.666667$ & $3$ & $4$ \\
    \end{tabular}%
    }
    \caption{The fort number, fractional zero forcing number, maximum nullity, and zero forcing number of the seven graphs $E_{1},E_{2},\ldots,E_{7}$ of order eight identified in~\cite{Barrett2025}.}
    \label{tab:bhhs_max_nullity}
\end{table}

We further demonstrate the validity of Conjecture~\ref{con:mn_lower_bounds} by showing it holds for vertex and edge sums of the graphs $E_{1},E_{2},\ldots,E_{7}$. 
Theorem~\ref{thm:mr_vs} shows how to compute the minimum rank of the direct sum of disjoint graphs.
Similarly, Theorem~\ref{thm:mr_es} shows how to compute the minium rank of the edge sum of disjoint graphs. 
Note that $r_{v}(G)=\mr(G)-\mr(G-v)$ denotes the \emph{rank spread of} of $G$ at $v$. 
\begin{theorem}[{\cite[Theorem 2.3]{Barioli2004}}]\label{thm:mr_vs}
Let $G_{1},\ldots,G_{h}$ be disjoint graphs and for each $i$ select $v_{i}\in V(G_{i})$.
Let $G$ be formed by joining all $G_{i}'s$ and identifying all $v_{i}'s$ as a unique vertex $v$. 
Then, 
\[
\mr(G) = \sum_{i=1}^{h}\mr(G_{i}-v) + \min\left\{\sum_{i=1}^{h}r_{v}(G_{i}),2\right\}.
\]
\end{theorem}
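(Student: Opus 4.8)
The plan is to prove the formula $\mr(G) = \sum_{i=1}^{h}\mr(G_{i}-v) + \min\left\{\sum_{i=1}^{h}r_{v}(G_{i}),2\right\}$ by establishing matching upper and lower bounds on $\mr(G)$ over all matrices in $S(G)$, exploiting the block structure induced by the shared cut-vertex $v$.

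First, I would set up notation: order the vertices of $G$ so that $v$ comes last, and note that any $A \in S(G)$ has a bordered block-diagonal form, with one diagonal block $B_i$ for each $G_i - v$ (these blocks are pattern-matrices in $S(G_i - v)$, since $G$ restricted to $V(G_i)\setminus\{v\}$ is exactly $G_i - v$), a final $1\times 1$ diagonal entry for $v$, and off-diagonal border vectors $b_i$ recording the edges from $v$ into $G_i$. For the upper bound, I would take for each $i$ an optimal matrix $\widehat{A}_i \in S(G_i)$ realizing $\mr(G_i)$, split off its $v$-row/column, assemble these into a candidate $A \in S(G)$, and bound $\operatorname{rank}(A)$. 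The key linear-algebra fact to invoke is the standard ``rank spread'' lemma (from Barioli--Fallat--Hogben or Hsieh): for a single graph, $\mr(G_i) = \mr(G_i - v) + r_v(G_i)$ with $r_v(G_i) \in \{0,1,2\}$, and the value of $r_v$ governs whether the border vector $b_i$ lies in the column space of $B_i$ ($r_v=0$), requires one extra rank unit ($r_v=1$), or two ($r_v=2$, which forces the diagonal entry at $v$ to also be ``used''). Summing these contributions and observing that the single diagonal entry at $v$ can absorb at most one ``diagonal'' unit of rank across all the branches simultaneously yields the $\min\{\,\cdot\,,2\}$ cap; one then argues $\operatorname{rank}(A) \le \sum \mr(G_i - v) + \min\{\sum r_v(G_i), 2\}$.

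For the lower bound, I would take an arbitrary $A \in S(G)$ with $\operatorname{rank}(A) = \mr(G)$, restrict to each principal submatrix on $V(G_i)$ to get $A_i \in S(G_i)$, and use the fact that rank of a principal submatrix is at most the rank of the whole matrix is the wrong direction --- instead I would use the complementary bound via the nullity / the fact that deleting the single row and column of $v$ drops the rank by at most $2$, together with $\operatorname{rank}(A - v) = \sum_i \operatorname{rank}(B_i') \ge \sum_i \mr(G_i - v)$ where $A - v$ is genuinely block-diagonal. Then a careful case analysis on how many branches $i$ have $b_i \notin \operatorname{col}(B_i)$, and whether the $v$-diagonal entry is forced, recovers the $\min\{\sum r_v(G_i), 2\}$ term as a lower bound: if at least two branches have positive rank spread, one shows $\operatorname{rank}(A) \ge \operatorname{rank}(A-v) + 2$; if exactly one does, $\operatorname{rank}(A) \ge \operatorname{rank}(A-v) + 1$; if none, equality with $\operatorname{rank}(A-v)$.

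I expect the main obstacle to be the lower bound, specifically the bookkeeping that shows the ``$+2$'' is genuinely forced when two or more branches each individually need a rank contribution from $v$ --- one must rule out the possibility that a single rank-one modification at the $v$ coordinate could simultaneously satisfy the column-space requirements of two distinct branches, which uses that the border vectors $b_i$ live in disjoint coordinate blocks and hence cannot interact. A clean way to organize this is to quote the known additivity/subadditivity results for minimum rank of vertex sums from the cut-vertex reduction literature (e.g.\ the cut-vertex theorem of Barioli, Fallat, and Hogben), of which Theorem~\ref{thm:mr_vs} is essentially the multi-branch restatement, and then the proof reduces to an induction on $h$ using the two-branch case as the base, with the $\min\{\,\cdot\,,2\}$ arising because rank spread at a vertex never exceeds $2$.
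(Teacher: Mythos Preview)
The paper does not prove this theorem: it is quoted verbatim from \cite{Barioli2004} (Theorem~2.3 there) and used as a black box in Section~\ref{sec:num_exp} to compute minimum ranks of vertex sums of the graphs $E_1,\ldots,E_7$. There is therefore nothing in the paper to compare your proposal against.

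That said, your sketch is broadly in the spirit of the original Barioli--Fallat--Hogben argument. A few cautions if you intend to flesh it out. For the upper bound, when you assemble optimal matrices $\widehat{A}_i\in S(G_i)$ into a single $A\in S(G)$, the diagonal entries at $v$ coming from the different $\widehat{A}_i$ need not agree, and you cannot simply shift one diagonal entry without potentially changing the rank of that block; the clean way around this is to work not with arbitrary rank-optimal $\widehat{A}_i$ but with matrices witnessing the specific value of $r_v(G_i)$, chosen so that the $v$-entry can be prescribed. For the lower bound, your case analysis is the right idea, but the statement ``if at least two branches have positive rank spread, one shows $\operatorname{rank}(A)\ge\operatorname{rank}(A-v)+2$'' is not quite what you need: the lower bound must hold for \emph{every} $A\in S(G)$, and an arbitrary $A$ restricted to $V(G_i)$ need not realize $r_v(G_i)$, so you must argue more carefully from the definition of $r_v$ as a minimum over $S(G_i)$. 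The induction-on-$h$ reduction to the two-branch case that you mention at the end is in fact how the original proof is organized, and is the cleanest route.
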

\begin{theorem}[{\cite[Theorem 2.6]{Barioli2004}}]\label{thm:mr_es}
Let $G=(V,E)$ and $G'=(V',E')$ be disjoint graphs and select $u\in V$ and $u'\in V'$. 
Let $G=G+_{e}G'$.
Then,
\[
\mr(G) = \begin{cases} \mr(G) + \mr(G') & \text{if $r_{u}(G)=2$ or $r_{u'}(G')=2$,} \\ \mr(G) + \mr(G') + 1 & \text{otherwise.}\end{cases}
\]
\end{theorem}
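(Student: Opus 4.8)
The plan is to obtain the edge-sum formula as a direct corollary of the vertex-sum formula in Theorem~\ref{thm:mr_vs}, with no explicit matrix computation. (The statement as written contains a typo: the left side should be $\mr(G+_{e}G')$ and the two terms on the right should be $\mr(G)$ and $\mr(G')$; I take this as the intended claim.) The key structural observation is that the edge sum is a vertex sum in disguise. Let $\hat{G}$ be the graph obtained from $G$ by appending a pendant vertex $p$ at $u$. Unwinding the definitions of $+_{e}$ and $\oplus_{v}$ from Section~\ref{subsec:prelim_graph_defs}, one checks that $G+_{e}G'$ is (up to relabelling the identified vertex) the vertex sum $\hat{G}\oplus_{v}G'$ formed by identifying $p\in V(\hat{G})$ with $u'\in V(G')$: after the identification both graphs have vertex set $V(G)\cup V(G')$ and edge set $E(G)\cup E(G')\cup\{\{u,u'\}\}$.

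First I would apply Theorem~\ref{thm:mr_vs} with $h=2$ to the pair $\hat{G}$ (at $p$) and $G'$ (at $u'$). Since $\hat{G}-p=G$, this gives
\[
\mr(G+_{e}G') = \mr(G) + \mr(G'-u') + \min\bigl\{r_{p}(\hat{G})+r_{u'}(G'),\,2\bigr\}.
\]
Next I would rewrite $\mr(G'-u') = \mr(G')-r_{u'}(G')$ (the definition of the rank spread) and evaluate $r_{p}(\hat{G})=\mr(\hat{G})-\mr(G)$ by a second application of Theorem~\ref{thm:mr_vs}, this time viewing $\hat{G}$ as the vertex sum of $G$ (at $u$) and $K_{2}$ (at one endpoint $x$). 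Since $\mr(K_{2}-x)=\mr(K_{1})=0$ and $r_{x}(K_{2})=\mr(K_{2})-\mr(K_{1})=1$, this yields $\mr(\hat{G})=\mr(G-u)+\min\{r_{u}(G)+1,\,2\}=\mr(G)-r_{u}(G)+\min\{r_{u}(G)+1,\,2\}$, so that $r_{p}(\hat{G})=1$ when $r_{u}(G)\le 1$ and $r_{p}(\hat{G})=0$ when $r_{u}(G)=2$. (This is the familiar ``pendant vertex'' behaviour of the minimum rank; it could instead simply be cited.)

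Finally I would substitute and collect cases. Writing $s=r_{u'}(G')\in\{0,1,2\}$ and $t=r_{p}(\hat{G})\in\{0,1\}$, the displayed identity becomes $\mr(G+_{e}G')=\mr(G)+\mr(G')-s+\min\{t+s,\,2\}$. Using $s\le 2$: if $t=0$ (that is, $r_{u}(G)=2$) the expression collapses to $\mr(G)+\mr(G')$; if $t=1$ it equals $\mr(G)+\mr(G')$ when $s=2$ and $\mr(G)+\mr(G')+1$ when $s\in\{0,1\}$. Reading this back, $\mr(G+_{e}G')=\mr(G)+\mr(G')$ exactly when $r_{u}(G)=2$ or $r_{u'}(G')=2$, and $\mr(G)+\mr(G')+1$ otherwise, which is the theorem. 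The main obstacle I anticipate is not the arithmetic but the degenerate inputs ($u$ or $u'$ an isolated vertex, or $G$ or $G'$ a single vertex), where I would verify directly that the vertex-sum realization and the pendant-vertex computation still apply; this, together with invoking the standard bound $0\le r_{v}(G)\le 2$ that underlies the $\min$ simplifications, is the only care the argument requires.
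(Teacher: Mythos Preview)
Your derivation is correct: the edge sum is indeed a vertex sum of $\hat{G}$ and $G'$ at the pendant, and the two applications of Theorem~\ref{thm:mr_vs} together with the rank-spread bookkeeping yield the stated dichotomy. The arithmetic in each case ($t\in\{0,1\}$, $s\in\{0,1,2\}$) checks out, and the typo you flagged in the statement is real.

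That said, there is nothing in the present paper to compare against: Theorem~\ref{thm:mr_es} is quoted from~\cite{Barioli2004} without proof, as a tool for computing $\mr$ of the test graphs in Section~\ref{sec:num_exp}. So your argument is not an alternative to anything the authors wrote here. For what it is worth, the route you take---reducing the edge sum to the vertex sum via a pendant and reading off $r_{p}(\hat{G})$ from a second vertex-sum application---is exactly the approach used in the source paper~\cite{Barioli2004}, so your reconstruction matches the original proof as well.
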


For each $1\leq i\leq 7$, $\mr(E_{i})$ is known.
Moreover, since $E_{i}-v$ is an order $7$ graph its minimum rank can be computed from its zero forcing number.
Hence, we are able to apply Theorem~\ref{thm:mr_vs} and Theorem~\ref{thm:mr_es} to compute the minimum rank of the vertex sum and edge sum, respectively, of the graphs $E_{1},E_{2},\ldots,E_{7}$.

In Figure~\ref{fig:bhhs_test}, we display the fort number, fractional zero forcing number, maximum nullity, and zero forcing number of $E_{i}\oplus_{v} E_{j}$ (left) and $E_{i}+_{e}E_{j}$ (right), for all $1\leq i,j\leq 7$, where $v$ and $e=\{u,u'\}$ are formed from randomly selected $u\in V(E_{i})$ and $u'\in V(E_{j})$. 
By examining the vertical grid lines, one can readily determine that Conjecture~\ref{con:mn_lower_bounds} holds for all generated graphs. 
\begin{figure}[ht!]
    \centering 
    \includegraphics[width=0.425\textwidth]{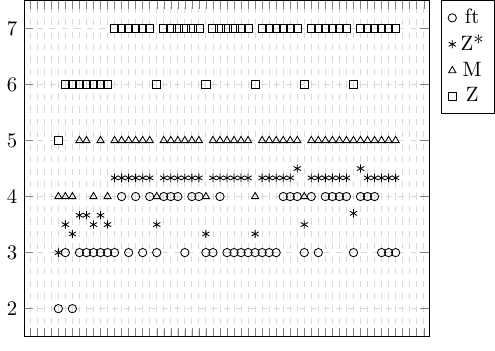}
    \hfill
    \includegraphics[width=0.425\textwidth]{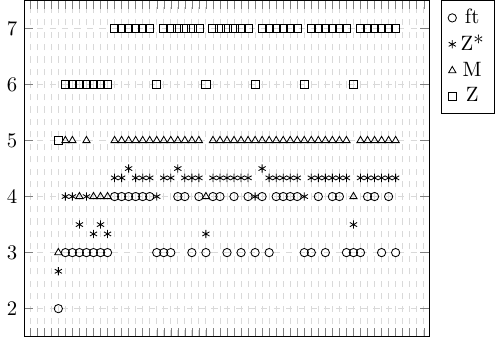}
    \caption{The fort number, fractional zero forcing number, maximum nullity, and zero forcing number of $E_{i}\oplus_{v} E_{j}$ (left) and $E_{i}+_{e}E_{j}$ (right), for all $1\leq i,j\leq 7$, where $v$ and $e=\{u,u'\}$ are formed from randomly selected $u\in V(E_{i})$ and $u'\in V(E_{j})$.}
    \label{fig:bhhs_test}
\end{figure}
\bibliographystyle{siam}
\bibliography{Bibliography}

\begin{thebibliography}{10}

\bibitem{Aazami2008}
{\sc A.~Aazami}, {\em Hardness results and approximation algorithms for some
  problems on graphs}, PhD thesis, University of Waterloo, 2008.

\bibitem{Agra2019}
{\sc A.~Agostinho, J.~O. Cerdeira, and C.~Requejo}, {\em A computational
  comparison of compact milp formulations for the zero forcing number},
  Discrete Appl. Math., 269 (2019), pp.~169--183.

\bibitem{Barioli2004}
{\sc F.~Barioli, S.~Fallat, and L.~Hogben}, {\em Computation of minimal rank
  and path cover number for certain graphs}, Linear Algebra Appl., 392 (2004),
  pp.~289--303.

\bibitem{Barrett2025}
{\sc W.~Barrett, M.~Hunnell, J.~Hutches, and J.~Sinkovic}, {\em The
  classification of graphs on $8$ vertices with coinciding zero forcing number
  and maximum nullity}, arXiv preprint arxiv:2506.10726,  (2025).

\bibitem{Becker2025}
{\sc P.~Becker, T.~R. Cameron, D.~Hanley, B.~Ong, and J.~P. Previte}, {\em On
  the number of minimal forts of a graph}, Graphs Combin., 41 (2025).

\bibitem{Benson2018}
{\sc K.~F. Benson, D.~Ferrero, M.~Flagg, V.~Furst, L.~Hogben, V.~Vasilevska,
  and B.~Wissman}, {\em Zero forcing and power domination for graph products},
  Australas J. Combin., 70 (2018), pp.~221--235.

\bibitem{Brimkov2025}
{\sc B.~Brimkov, T.~R. Cameron, and O.~Grubbs}, {\em On the forts and related
  parameters of the hypercube graph}, arxiv preprint arxiv:2507.10826,  (2025).

\bibitem{Brimkov2019th}
{\sc B.~Brimkov, J.~Carlson, I.~V. Hicks, R.~Patel, and L.~Smith}, {\em Power
  domination throttling}, Theoret. Comput. Sci., 795 (2019), pp.~142--153.

\bibitem{Brimkov2019zf}
{\sc B.~Brimkov, C.~C. Fast, and I.~V. Hicks}, {\em Computational approaches
  for zero forcing and related problems}, European J. Oper. Res., 273 (2019),
  pp.~889--903.

\bibitem{Brimkov2021}
{\sc B.~Brimkov, C.~Mikesell, and I.~V. Hicks}, {\em Improved computational
  approaches and heuristics for zero forcing}, INFORMS J. Comput., 33 (2021),
  pp.~1384--1399.

\bibitem{Brueni2005}
{\sc D.~J. Brueni and L.~S. Heath}, {\em The {PMU} placement problem}, SIAM J.
  Discrete Math., 19 (2005), pp.~744--761.

\bibitem{Burgarth2007}
{\sc D.~Burgarth and V.~Giovannetti}, {\em Full control by locally induced
  relaxation}, Phys. Rev. Lett., 99 (2007), p.~100501.

\bibitem{Burgarth2015}
{\sc D.~Burgarth, V.~Giovannetti, L.~Hogben, S.~Severini, and M.~Young}, {\em
  Logic circuits from zero forcing}, Nat. Comput., 14 (2015), pp.~485--490.

\bibitem{Butler2014}
{\sc S.~Butler, L.~DeLoss, J.~Grout, H.~Hall, J.~LaGrange, T.~McKay, J.~Smith,
  and G.~Tims}, {\em Minimum rank library}.
\newblock \url{https://github.com/jasongrout/minimum\_rank}, 2014.

\bibitem{Butler2013}
{\sc S.~Butler and M.~Young}, {\em Throttling zero forcing propagation time
  speed on graphs}, Australas J. Combin., 57 (2013), pp.~65--71.

\bibitem{Cameron2023}
{\sc T.~R. Cameron, L.~Hogben, F.~H.~J. Kenter, S.~A. Mojallal, and
  H.~Schuerger}, {\em Forts,(fractional) zero forcing, and cartesian products
  of graphs}, arXiv preprint arXiv:2310.17904,  (2023).

\bibitem{Carlson2021}
{\sc J.~Carlson and J.~Kritschgau}, {\em Various characterizations of
  throttling numbers}, Discrete Appl. Math., 294 (2021), pp.~85--97.

\bibitem{Chilakamarri2012}
{\sc K.~Chilakamarri, N.~Dean, C.~X. Kang, and E.~Yi}, {\em Iteration index of
  a zero forcing set in a graph}, Bull. Inst. Combin. Appl., 64 (2012),
  pp.~57--72.

\bibitem{Dantzig1954}
{\sc G.~Dantzig, R.~Fulkerson, and S.~Johnson}, {\em Solution of a large-scale
  traveling-salesman problem}, Oper. Res., 2 (1954), pp.~393--410.

\bibitem{DeLoss2008}
{\sc L.~DeLoss, J.~Grout, L.~Hogben, T.~McKay, J.~Smith, and G.~Tims}, {\em
  Table of minimum ranks of graphs of order at most 7 and selected optimal
  matrices}, arXiv preprint arxiv:0812.0870,  (2008).

\bibitem{Ekstrand2013}
{\sc J.~Ekstrand, C.~Erickson, H.~Hall, and et~al.}, {\em Positive semidefinite
  zero forcing}, Linear Algebra Appl., 439 (2013), pp.~1862--1874.

\bibitem{Fallat2007}
{\sc S.~Fallat and L.~Hogben}, {\em The minimum rank of symmetric matrices
  described by a graph: A survey}, Lnear Algebra Appl., 426 (2007),
  pp.~558--582.

\bibitem{Fast2018}
{\sc C.~Fast and I.~V. Hicks}, {\em Effects of vertex degrees on the
  zero-forcing number and propogation time of a graph}, Discrete Appl. Math.,
  250 (2018), pp.~215--226.

\bibitem{Fetcie2015}
{\sc K.~Fetcie, B.~Jacob, and D.~Saavedra}, {\em The failed zero forcing number
  of a graph}, Involve, 8 (2015), pp.~99--117.

\bibitem{AIM2008}
{\sc A.~M. R. S. G.~W. Group}, {\em Zero forcing sets and the minimum rank of
  graphs}, Linear Algebra Appl., 428 (2008), pp.~1628--1648.

\bibitem{Hall2010}
{\sc H.~T. Hall, L.~Hogben, R.~Martin, and B.~Shader}, {\em Expected values of
  parameters associated with the minimum rank of a graph}, Linear Algebra
  Appl., 433 (2010), pp.~101--117.

\bibitem{Hogben2012}
{\sc L.~Hogben, M.~Huynh, N.~Kingsley, S.~Meyer, S.~Walker, and M.~Young}, {\em
  Propogation time for zero forcing on a graph}, Discrete Appl. Math., 160
  (2012), pp.~1994--2005.

\bibitem{Hogben2022}
{\sc L.~Hogben, J.~C.-H. Lin, and B.~Shader}, {\em Inverse Problems and Zero
  Forcing for Graphs}, AMS, Providence, Rhode Island, 2022.

\bibitem{Hogben2007}
{\sc L.~Hogben and H.~van~der Holst}, {\em Forbidden minors for the class of
  graphs g with $\xi(g)\leq 2$}, Linear Algebra Appl., 423 (2007), pp.~42--52.

\bibitem{Horn2012}
{\sc R.~Horn and C.~Johnson}, {\em Matrix Analysis}, Cambridge University
  Press, Cambridge, England, 2012.

\bibitem{Johnson1999}
{\sc C.~Johnson and A.~L. Duarte}, {\em The maximum multiplicity of an
  eigenvalue in a matrix whose graph is a tree}, Linear Multilinear Algebra, 46
  (1999), pp.~139--144.

\bibitem{Mckay2014}
{\sc B.~McKay and A.~Piperno}, {\em Practical graph isomorphism, ii}, Journal
  of Symbolic Computation, 60 (2014), pp.~94--112.

\bibitem{Nemhauser1999}
{\sc G.~L. Nemhauser and L.~A. Wolsey}, {\em Integer and Combinatorial
  Optimization}, Wiley, New York, New York, 1999.

\bibitem{Row2012}
{\sc D.~D. Row}, {\em A technique for computing the zero forcing number of a
  graph with a cut-vertex}, Linear Algebra Appl., 436 (2012), p.~4423–4432.

\bibitem{Severini2008}
{\sc S.~Severini}, {\em Nondiscriminatory propagation on trees}, J. Phys. A, 41
  (2008), p.~482002.

\bibitem{Shitov2017}
{\sc Y.~Shitov}, {\em On the complexity of failed zero forcing}, Theor. Comput.
  Sci., 660 (2017), pp.~102--104.

\bibitem{Swanson2023}
{\sc N.~Swanson and E.~Ufferman}, {\em A lower bound on the failed zero-forcing
  number of a graph}, Involve, 16 (2023), pp.~493--504.

\bibitem{Trefois2015}
{\sc M.~Trefois and J.~C. Delvenne}, {\em Zero forcing number, constrained
  matchings, and strong structural controllability}, Linear Algebra Appl., 484
  (2015), pp.~199--218.

\bibitem{Holst2008}
{\sc H.~{van der Holst}}, {\em Three connected graphs whose maximum nullity is
  at most three}, Linear Algebra Appl., 429 (2008), pp.~625--632.

\end{thebibliography}
\end{document}